\theoremstyle{plain}
\newtheorem{thm}{Theorem}[section]
\theoremstyle{plain}
\newtheorem{lem}[thm]{Lemma}
\newtheorem{assum}[thm]{Assumption}
\newtheorem{prop}[thm]{Proposition}
\newtheorem{cor}[thm]{Corollary}
\theoremstyle{definition}
\newtheorem{defi}{Definition}[section]
\newtheorem{rem}{Remark}[section]
\newcommand{\R}{\mathbb{R}}
\newcommand{\rn}{\mathbb{R}^{N}}
\newcommand{\rnn}{\mathbb{R}^{N}}
\newcommand{\hn}{M}
\newcommand{\hnn}{\mathbb{H}^{N}}
\renewcommand{\c}{\mathsf{c}}
\newcommand{\RR}{\mathbb{R}}
\newcommand{\HH}{\mathbb{H}}
\newcommand{\GM}{\mathbb{G}_{M}^s}
\newcommand{\GMcc}{\mathbb{G}_{M_{\mathsf{c}}}^s}
\newcommand{\GR}{\mathbb{G}_{\RR^N}^s}
\newcommand{\GH}{\mathbb{G}_{\HH^N}^s}
\newcommand{\dx}{\,{\rm d}x}
\newcommand{\rd}{{\rm d}}
\newcommand{\dg}{\rd\mu_{\hn}}
\newcommand{\dgcc}{\rd\mu_{M_\c}}
\newcommand{\dgh}{\rd\mu_{\HH^N}}
\numberwithin{equation}{section} \allowdisplaybreaks
\definecolor{darkblue}{rgb}{0.05, .05, .65}
\definecolor{darkgreen}{rgb}{0.1, .65, .1}
\definecolor{darkred}{rgb}{0.8,0,0}
\begin{document}
 \title[Fractional Porous Medium Equation on manifolds]{The Fractional Porous Medium Equation\\ on noncompact Riemannian manifolds}
\author{Elvise Berchio}
\address{\hbox{\parbox{5.7in}{\medskip\noindent{Dipartimento di Scienze Matematiche, \\
Politecnico di Torino,\\
        Corso Duca degli Abruzzi 24, 10129 Torino, Italy. \\[3pt]
        \em{E-mail address: }{\tt elvise.berchio@polito.it}}}}}

\author{Matteo Bonforte}
\address{\hbox{\parbox{5.7in}{\medskip\noindent{Departamento de Matem\'aticas,   Universidad Aut\'onoma de Madrid, and \\
ICMAT - Instituto de Ciencias Matem\'{a}ticas, CSIC-UAM-UC3M-UCM,\\
Campus de Cantoblanco, 28049 Madrid, Spain.
 \\[3pt]
        \em{E-mail address: }{\tt matteo.bonforte@uam.es}}}}}

\author{Gabriele Grillo}
\address{\hbox{\parbox{5.7in}{\medskip\noindent{Dipartimento di Matematica,\\
Politecnico di Milano,\\
   Piazza Leonardo da Vinci 32, 20133 Milano, Italy. \\[3pt]
        \em{E-mail address: }{\tt
          gabriele.grillo@polimi.it}}}}}

          \author{Matteo Muratori}
\address{\hbox{\parbox{5.7in}{\medskip\noindent{Dipartimento di Matematica,\\
Politecnico di Milano,\\
   Piazza Leonardo da Vinci 32, 20133 Milano, Italy. \\[3pt]
        \em{E-mail address: }{\tt
          matteo.muratori@polimi.it}}}}}

\date{}

\dedicatory{Dedicated to the Memory of Marek Fila, Mathematician and Friend}

\keywords{Fractional porous medium equation; fractional Laplacian; noncompact manifolds; a priori estimates; curvature bounds; Sobolev inequality; smoothing effects}

\subjclass[2010]{Primary: 35R01. Secondary: 35K65, 35A01, 35R11, 58J35.}

\begin{abstract}
We study nonnegative solutions to the Fractional Porous Medium Equation on a suitable class of connected, noncompact Riemannian manifolds.  We provide existence and smoothing estimates for solutions, in an appropriate weak (dual) sense, for data belonging either to the usual $L^1$ space or to a considerably larger weighted space determined in terms of the fractional Green function. The class of manifolds for which the results hold includes both the Euclidean and the hyperbolic spaces and even in the Euclidean situation involves a class of data which is larger than the previously known one.
\end{abstract}

\maketitle

\normalsize

\section{Introduction} Let $M$ be an $N$-dimensional   geodesically   complete, connected, noncompact   Riemannian   manifold. We shall study \emph{nonnegative} solutions to the Fractional Porous Medium Equation (FPME):
\begin{equation}\label{NFDE}
\left \{ \begin{array}{ll}
\partial_t u + (- \Delta_{\hn})^{s} \!\left(u^m\right)= 0 &  (t,x)\in  (0, \infty) \times \hn\,,\\
u(0,x)=u_0(x)\ge0\,, & x\in \hn,
\end{array}
\right.
\end{equation}
where  $(-\Delta_{\hn})^s$ denotes the (spectral) fractional Laplacian on $M$, defined as the spectral $s$-th power of the  Laplace-Beltrami operator $ -\Delta_M $ associated to the Riemannian metric of $M$,  $0 < s < 1$,  $m > 1$. We investigate well-posedness of \eqref{NFDE} and the validity of suitable \it smoothing effects \rm for such evolution, this meaning quantitative bounds on the $L^\infty$ norm of the solution at time $t>0$ in terms of a (possibly weighted) $L^1$ norm of the initial datum. These smoothing effects will be proved for data that belong to a space which is \it strictly larger \rm than  $L^1(M)$ , and will involve an  $L^1$ -norm which is \it weighted \rm w.r.t the $s$-Green function of $M$, namely to the kernel of  $(-\Delta_M)^{-s}$ , which is of course assumed to exist. This will be a consequence of our main hypotheses, in particular of Assumption \ref{general} on $M$. It should be stressed that the class of data we shall deal with is \it larger \rm than  {the previously known one}   in the Euclidean case, in fact we allow in that case for (nonnegative) data $u_0$ that are dominated by $ C |x|^{-a}$  for all $|x|\geq R$, for some $C, R>0$ and $a>2s$, which hence need not be integrable.

The FPME in the Euclidean setting has been the object of extensive research, starting from the seminal works \cite{dPQRV1, dPQRV2, BV3}. A number of subsequent works, see \cite{BFR, BFV, BSV, BV2, BV1}, involve the behaviour of analogues of \eqref{NFDE} when the equation is posed in a bounded Euclidean domain, when \it suitable versions \rm of homogeneous Dirichlet boundary conditions are assumed. In these papers the authors develop a common setup that allows to deal with many different nonlocal operators, including the three most common non-equivalent versions of the  (Dirichlet) fractional Laplacian: the  restricted (or standard), the spectral and the censored (or regional). All of them require different kind of homogeneous Dirichlet boundary conditions. A common feature of dealing with such problems consists in relating the behaviour of solutions to the analogue of \eqref{NFDE} to properties of the \it Green function \rm of the operator defining the evolution, for which precise \it pointwise estimates \rm are explicitly known.  This method is quite flexible and allows to deal with different settings, as it encodes the properties of the evolution in the   Green functions's behaviour  .  It can also be used to extend the theory to a larger class of data, belonging to a weighted $L^1$ space, the weight being the fractional   Green   function.

On the other hand, so far this kind of method has been used in settings in which the   Green function's   behavior is fairly well-known, in the form of two-sided pointwise bounds. Therefore, it is a nontrivial task to generalize it to cases in which the   Green function's   pointwise behavior is not precisely known. This kind of difficulty already appeared in the paper \cite{BBGG}, where equation \eqref{NFDE} is studied only in the \it hyperbolic space\rm, an important but specific case.  There, the fractional Green function can be estimated explicitly by means of sharp heat kernels estimates, obtaining clean estimates about its tail behavior.
Related results, obtained by different methods, have appeared in \cite{RS} (see also \cite{RS2, RS3} for previous investigation in related, yet different geometric settings), where existence and suitable smoothing effects are proven, for initial data belonging to $L^1\cap L^p$ with $p\ge2$. The strategy adopted in \cite{RS} consists in exploiting suitable fractional versions of Gagliardo-Nirenberg-Sobolev or log-Sobolev inequalities, and then run a nowadays standard Moser-type iteration. We refer to \cite{DA} for related results in the linear setting, or to \cite{dPQRV2, GMPo} for the nonlinear case, and to \cite{V2} for a general treatment as nonlinear diffusions are concerned. It seems not possible to adapt the latter methods to deal with the larger class of data that we consider here.

We mention that nonlinear (local) diffusions of porous medium type on manifolds have been the object of several recent papers, see e.g. \cite{BGV, GM1, GM, GMP, GMV, GMV-MA, V4}, where a quite complete basic theory has been developed: existence, uniqueness, smoothing effects, behaviour of fundamental solutions, and the large time behaviour of nonnegative solutions.
Most of those papers deal with the case of Cartan-Hadamard manifolds (i.e. manifolds that are complete, simply connected and whose sectional curvature is everywhere nonpositive), the prototype being the hyperbolic space. Significant differences with the Euclidean framework are shown, e.g. in the form of the smoothing effects, long-time asymptotics of solutions, different exponent range for extinction in finite time.
It seems not easy to extend the ``local techniques'' of these paper to the present nonlocal (fractional) setting, hence new strategies are necessary. We show here that the above mentioned Green function methods, together with geometric analysis tools, allow to prove existence of nonnegative solutions and smoothing effects for a large class of data, see Theorem \ref{smoothPhi}. Moreover, when dealing with  manifolds whose sectional curvature is bounded above by a strictly negative constant, we can show \it improved smoothing effects for $L^1$ data, \rm \eqref{thm.smoothing.HN-like.estimate.2} in Theorem \ref{thm-existence}, which proves a fractional analogue of the local result proven in \cite{GM, V4}.

We conclude this introduction by noticing that we shall make no use of the extension methods originally introduced in \cite{CS}, and then shown to hold in several significant geometric situations in \cite{BGS} (see also \cite{BGFW} for related topics), relying instead only on the spectral definition of the fractional Laplacian. It is possible that extension methods allow to prove smoothing effects in an alternative way, see e.g. \cite{dPQRV2, GMP1}. 

The paper is organized as follows. In Section \ref{PMR} we list our notations, geometric assumptions and definitions, and state our main results, Theorems \ref{thm-existence} and \ref{thm.smoothing.HN-like}. Section \ref{potential} provides some crucial and delicate technical lemmas, namely comparison results for integrals of the Green function, and comparison between potentials of test functions and the Green function, that will be essential ingredients in our main proofs. Section \ref{init-data} carefully discusses the class of data we are dealing with. In Section \ref{smooth} we prove existence and fundamental estimates for approximate solutions, whereas the proof of our main results is fully given in Section \ref{sect.semigroups}. The short Section \ref{open} lists some open problems that might originate future lines of research.

 \section{Preliminaries and statements of the main results}\label{PMR}

In the following, through some geometric and functional assumptions, we first describe the general Riemannian setting in which we set our problem. Subsequently, we provide the precise notion of solution to \eqref{NFDE} we will work with, and then state our related main results regarding existence and smoothing effects.

\subsection{Notation}  If $ M $ is the Riemannian manifold at hand, we let $ \dg  $ denote its volume measure (sometimes written as $ \dg(x) $ when it is relevant to highlight the integration variable), $ \mathrm{Ric}(M) $ its Ricci curvature and $ \mathrm{sec}(M) $ its sectional curvature. For any $ x_0 \in M $ and $ r>0 $, the standard symbol $ B_r(x_0)$ stands for the geodesic ball of radius $ r $ centered at $ x_0 $. The geodesic distance between two points $ x,y \in M $, often interpreted as a radius, is denoted by $ r(x,y) $ or $ r_M(x,y) $ in order to avoid ambiguity when working with different manifolds.

Since we will have to deal with several multiplying constants, whose exact value is immaterial to our purposes, we have decided to use as much as possible the general symbol $C$. The actual value may therefore change from line to line, without explicit reference. However, when it is significant to specify the dependence of $C$ on suitable parameters, we will write it explicitly. Nevertheless, in some cases, in order to avoid ambiguity, we will use other symbols.

\subsection{Geometric assumptions and consequences}\label{GAC}
 We will assume the following primary condition on $M$, which is crucial to most of our results.
\begin{assum}\label{general}
	We require that $M$ is an $N$-dimensional ($N\geq 2$) complete, connected, noncompact Riemannian manifold such that its Ricci curvature is bounded below:
 \begin{equation}\label{ric}
  {\rm Ric}(M)\geq -(N-1)k \quad  \text{ for some } k>0\,.
 \end{equation}
Besides, we require that the following Faber-Krahn inequality holds:
\begin{equation}\label{FK}
 \lambda_1(\Omega)\ge c \, \mu_M(\Omega)^{-\frac2N}
\end{equation}
for a suitable $c>0$, where $\Omega$ is an arbitrary open, relatively compact subset of $M$ and $\lambda_1(\Omega)$ is the first eigenvalue of the Laplace-Beltrami operator $-\Delta_M$ with homogeneous Dirichlet boundary conditions on $\partial\Omega$.
\end{assum}

We point out that \eqref{FK} is equivalent to the \emph{Nash inequality}
\begin{equation*}
\left\| f \right\|_2^{1+\frac2N} \le C \left\| f \right\|_1^{\frac2N}  \left\| \nabla f \right\|_2
\end{equation*}
 and, when $N\ge3$, to the \emph{Sobolev inequality}
\[
\left\| f \right\|_{\frac{2N}{N-2}} \le C \left\| \nabla f \right\|_2 ,
\]
for all smooth and compactly supported $f$ (see e.g.~\cite[Chapter 8]{H} and \cite{C}).

Furthermore, it is also known that \eqref{FK} implies, for all $\varepsilon>0$, $x,y\in M$ and $t>0$, the following \emph{Gaussian upper bound} on the \emph{heat kernel} $k_{\hn}(t,x,y)$ of $M$:
\begin{equation}\label{gaussian}
k_{\hn}(t,x,y)\le \frac C{t^\frac N2} \, e^{-\frac{r(x,y)^2}{(4+\varepsilon)t}} \, ,
\end{equation}
where $ C>0 $ is a suitable constant that depends only on $ N,c,\varepsilon $. It is also important, to the sequel, to observe that \eqref{FK} implies (see e.g.~\cite[formula (8.4)]{H}) the following lower bound for volumes of balls:
\begin{equation}\label{non-coll}
\mu_M(B_r(x_0))\ge C\, r^N  \qquad  \text{for all $ r>0 $ and all $ x_0\in M $} \, ,
\end{equation}
for a suitable constant $C>0$ depending only on $ N $ and $ c $. On the other hand, it is a standard consequence of \eqref{ric} (through the Bishop-Gromov theorem, see e.g.~\cite[Theorem 1.1]{H}) that the reverse inequality holds for small radii:
\begin{equation}\label{uppervolume}
\mu_M(B_r(x_0)) \le C\,r^N \qquad  \text{for all $ 0<r<1 $ and all $ x_0\in M $} \, ,
\end{equation}
for another constant $C>0$ depending only on $ N $ and $ k $.

Let us now briefly investigate the consequences of the above assumptions in terms of the \emph{fractional Laplacian}, namely the operator $ (-\Delta_M)^s $ defined as the spectral $s$-th power of the  Laplace-Beltrami operator $ -\Delta_M $,   meant as the Friedrichs extension of the (nonnegative and essentially self-adjoint, see \cite[Theorem 2.4]{Stri}) same operator acting on test functions. Its $s$-th power is then defined via the spectral theorem.  Note that it is also given by the explicit formula
$$
 (-\Delta_M)^s f(x) = \frac1{\Gamma(-s)}\int_0^{+\infty} \left(\int_M k_{\hn}(t,x,y)\left(f(y)-f(x)\right)
 \, \dg(y)\right)\, \frac{\rm{d}t}{t^{1+s}}
$$
for a suitable set of functions $f$. For instance, this is the case for every $ f  $ in the $L^2(M)$ domain of $ (-\Delta) $ (see e.g.~\cite[Theorem 4.3.5]{J}), thus in particular smooth and compactly supported functions will do. In this regard, let us also mention \cite{Sti} and references quoted therein, in particular \cite{B,K}, or the recent preprints \cite{Caselli, Caselli2}.

Thanks to the Gaussian bound \eqref{gaussian}, we have in particular that $M$ is \emph{$s$-nonparabolic}, in the sense that the integral
\begin{equation}\label{fract}
\GM(x,y):=\int_0^{+\infty}\frac{k_{\hn}(t,x,y)}{t^{1-s}}\,{\rm{d}t}
\end{equation}
is finite for all $x,y\in M$ with $x\not=y$. It is well known that the function $\GM$ defined above is the \it fractional Green function \rm on $M$ (or fractional Riesz potential), in the sense that $ (-\Delta_M)^s \, \GM(\cdot,y)  = \delta_y $ for every $ y \in M $, where $ \delta_y $ stands for the Dirac delta centered at $y$ (such identity should be understood in the distributional sense). More precisely, from \eqref{gaussian} and \eqref{fract}, a straightforward computation yields the Euclidean-type bound
\begin{equation}\label{green-euc}
\GM(x,y)\le \frac C{r(x,y)^{N-2s}} \qquad \forall x,y\in M \, ,
\end{equation}
for some $ C=C(N,c,s)>0 $. Under stronger (curvature) assumptions such as the ones we will introduce below, we are actually able to provide some sharper estimates (see   \eqref{Hyp.Green.RN2} and \eqref{Hyp.Green.HN0b}).

Note that, in view of the continuity of the map $ x \mapsto k_{\hn}(t,x,y) $ for every fixed $ (t,y) \in \mathbb{R}^+ \times M $, by virtue of estimate \eqref{gaussian} we can apply Lebesgue's dominated convergence theorem to \eqref{fract} and obtain that for every fixed $ y \in M $, the map $ x \mapsto \GM(x,y) $ is continuous in $ \hn \setminus \{ y \} $.

Once the fractional Green function has been shown to be well defined, one can introduce, for any sufficiently regular function $ \psi $, its \emph{fractional potential} by means of the following ``convolution'' formula:
\begin{equation}\label{fract-bis}
(-\Delta_M)^{-s} \psi (x) : = \int_{\hn} \psi(y) \, \GM(x,y) \, \dg(y) =  \int_0^{+\infty} \left(\int_M\frac{k_{\hn}(t,x,y)}{t^{1-s}}\psi(y)\, \dg(y)\right)\, {\rm{d}t}\,.
\end{equation}
The symbol $ (-\Delta_M)^{-s} $ is not employed by chance, since it turns out to be the true left-inverse operator of $ (-\Delta_M)^s $, at least on appropriate subspaces of functions (for more details see Lemma \ref{lemma-inv} below). We will often deal with potentials of \emph{nonnegative bounded and compactly supported} functions, for which Assumption \ref{general} guarantees that $ (-\Delta_M)^{-s} $ is both well defined and it satisfies suitable two-sided bounds, see Lemma \ref{boundgreen} below.

In some of our results, we will need the following stricter assumptions.

\begin{assum}\label{ch}
We require that $M$ is an $N$-dimensional Cartan-Hadamard manifold, namely that $M$ is complete, simply connected and has everywhere nonpositive sectional curvature.
\end{assum}

Note that if $ M $ is a Cartan-Hadamard manifold, then \eqref{FK} is always true (see again \cite[Chapter 8]{H}), whereas \eqref{ric} should still be required separately.

In order to obtain some improved estimates, we will require a stricter hypothesis: 

\begin{assum}\label{neg}
We require that $M$ is an $ N $-dimensional Cartan-Hadamard and, besides, that
\begin{equation}\label{sec}
  {\rm sec}(M)\le  -\c \qquad \text{for a given } \c >0\, .
 \end{equation}
 \end{assum}

\subsection{Definition of Weak Dual Solutions (WDS)}\label{def-wds}
We will deal with suitable solutions to \eqref{NFDE} starting from initial data that belong to the classical $L^1(\hn)$ space or to the following \emph{weighted space}, defined in terms of the fractional Green function:
\begin{equation*}\label{LG}
L^1_{\GM}(\hn) := \left \{ f : M \rightarrow \R \text{ measurable} : \ \sup_{x_0\in M}   \left\| f \right\|_{L^1_{x_0,\GM}} < +\infty  \right\} ,
\end{equation*}
where, for every fixed $ x_0 \in \hn $, we put
\begin{equation}\label{normLGx0}
\left\| f \right\|_{L^1_{x_0,\GM}}  := \int_{B_1(x_0)} \left| f (x) \right| \dg(x) + \int_{\hn \setminus B_1(x_0)} \left| f(x) \right| \GM(x ,x_0) \, \dg(x) \, .
\end{equation}
The space $ L^1_{x_0,\GM}(M) $ is in turn defined as the set of all measurable functions for which the norm in \eqref{normLGx0} is finite. The space $L^1_{\GM}(\hn)$ is thus a reinforcement of the latter, since we require that $ \| \cdot \|_{L^1_{x_0,\GM}} $ is \emph{uniformly} bounded with respect to the reference point $x_0 $. It is therefore natural to endow $L^1_{\GM}(\hn)$ with the norm
\begin{equation*}
\left\| f \right\|_{L^1_{\GM}}  := \sup_{x_0\in M}  \left\| f \right\|_{L^1_{x_0,\GM}} .
\end{equation*}
For usual $ L^p(M) $ spaces, the corresponding norm will typically be written as $ \| \cdot \|_{L^p(\hn)} $, except in some cases where for readability purposes we will adopt the more compact notation $ \| \cdot \|_{p} $.

Under Assumption \ref{general}, thanks to \eqref{green-euc}, it is apparent that $ \GM(x,x_0) \leq C$ for all $x \in \hn\setminus B_1(x_0)$ and all $x_0\in \hn$, so that the inclusion $L^1(\hn) \subseteq L^1_{\GM}(\hn)$ holds. Moreover, the inclusion $ L^1_{\GM}(\hn) \subseteq L^1_{x_0,\GM}(\hn) $ trivially holds by definition. One may then wonder whether those spaces actually coincide. The answer is negative, as in Section \ref{init-data} we will provide some explicit examples showing that such inclusions are \emph{strict}: $L^1(\hn) \subsetneq L^1_{\GM}(\hn) \subsetneq L^1_{x_0,\GM}(\hn) $ for all $x_0\in \hn$. In addition, on space forms (i.e.~$ \RR^N $ or $\HH^N$), we will also provide admissible decay rates for functions to belong to $L^1_{\GM}(\hn)$, which somehow give a more noticeable feeling  about how larger these spaces can be compared to $ L^1(M) $.

We are now ready to give a proper definition of \emph{weak dual solution}, based on the simple (formal) observation that applying the operator $ (-\Delta_M)^{-s} $ to both sides of \eqref{NFDE}, we obtain the ``dual equation''
\begin{equation}\label{dual-idea}
\partial_t \! \left[ (- \Delta_{\hn})^{-s} u \right] + u^m = 0 \, .
\end{equation}

\begin{defi}\label{defi_WDS}
	
 Let $ u_0 \in L^1_{\GM}(\hn) $, with $ u_0 \ge 0 $. We say that a nonnegative measurable function $ u  $ is a Weak Dual Solution (WDS) to problem~\eqref{NFDE} if, for every $T>0$:

\begin{itemize}
	
\item $u \in C^0([0, T];  L^1_{x_0,\GM}(\hn))$ for all $ x_0 \in \hn $;

\smallskip

\item  $u^m \in L^1( (0, T) ; L^{1}_{loc}(\hn) ) $;

\smallskip

\item $u$ satisfies the identity
\begin{equation}\label{def_eq}
\int_{0}^{T} \int_{\hn}  \partial_t \psi \,  (- \Delta_{\hn})^{-s} u \, \dg \, {\rm d}t - \int_{0}^{T} \int_{\hn} u^m \, \psi \, \dg\, {\rm d}t = 0
\end{equation}
for every test function $\psi \in C^1_c((0,T); L_c^{\infty}(\hn))$;

\smallskip

\item $u(0,\cdot)=u_0$ a.e.~in $\hn$.

\end{itemize}

\end{defi}

\begin{rem}\label{remdef}
Equation \eqref{def_eq} is well defined under Assumption \ref{general}. More precisely, we can show that $  (- \Delta_{\hn})^{-s} u \in C^0([0,T];L^1_{loc}(M)) $. Indeed, for any $ \sigma>0 $ and $ x_0 \in \hn $, thanks to Lemma \ref{lem.Phi.estimates} below (plus Remark \ref{generalR}) we can assert that
$$(-\Delta_{\hn})^{-s} \left( \chi_{B_\sigma(x_0)} \right)\!(x) \leq  C \qquad \text{for all $x \in M$: }  r(x,x_0)<1 $$
and
$$(-\Delta_{\hn})^{-s} \left( \chi_{B_\sigma(x_0)} \right)\!(x) \leq C \, \GM(x, x_0) \qquad \text{for all $x \in M$: }  r(x,x_0) \ge 1 \, , $$
for a suitable constant $ C>0 $. In particular, formula \eqref{fract-bis} and Fubini's theorem yield (recall that $ \GM(x,y)=\GM(y, x) $), for all $t \in [0,T]$:
$$
\begin{gathered}
\int_{B_\sigma(x_0)}  (-\Delta_{\hn})^{-s}u (t,x) \, \dg(x) = \int_{\hn}  u(t,x) \, (-\Delta_{\hn})^{-s}\left(  \chi_{B_\sigma(x_0)}  \right)\!(x) \, \dg(x) \le C \left\| u(t) \right\|_{L^1_{x_0,\GM}} \, .
\end{gathered}
$$
Given the arbitrariness of $ \sigma $ and $x_0 $, we thus deduce that $ t \mapsto (-\Delta)^{-s} u(t,\cdot) $ is a curve with values in $ L^1_{loc}(M) $. Moreover, since $ t \mapsto u(t,\cdot) $ is a \emph{continuous} curve with values in $ L^1_{x_0,\GM}(M) $, still by applying the above estimates and Fubini's theorem to differences we obtain for all $t,s \in [0,T]$
$$
\begin{aligned}
 \int_{B_\sigma(x_0)} &\left|  (-\Delta_{\hn})^{-s}u (t,x) - (-\Delta_{\hn})^{-s}u (s,x) \right| \dg(x) \\
\le & \, \int_{B_\sigma(x_0)} (-\Delta_{\hn})^{-s}\left( \left|  u (t,\cdot) - u(s,\cdot) \right|\right)\!(x) \, \dg(x) \\
= & \, \int_{\hn} \left| u(t,x)-u(s,x) \right| (-\Delta_{\hn})^{-s}\left(  \chi_{B_\sigma(x_0)}  \right)\!(x) \, \dg(x) \le C \left\| u(t)-u(s)  \right\|_{L^1_{x_0,\GM}}\,, 
\end{aligned}
$$
which yields the claim.
\end{rem}

\subsection{Main results}
We will construct a WDS for any nonnegative initial datum $ u_0\in  L^1_{\GM}(\hn)$, as a monotone limit of nonnegative $L^1$ semigroup (mild) solutions, see Section \ref{smooth} for further details.
\begin{thm}[Existence of a WDS for data in $L^1_{\GM}$]\label{thm-existence}
	Let $M$ satisfy Assumption \ref{general}, and let $u_0 $ be any nonnegative initial datum such that $ u_0\in  L^1_{\GM}(\hn)$. Then there exists a weak dual solution to problem~\eqref{NFDE}, in the sense of Definition \ref{defi_WDS}.
\end{thm}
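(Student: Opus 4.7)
The plan is to construct the WDS as a monotone limit of a sequence of ``good'' approximate solutions associated to an increasing sequence of bounded, compactly supported initial data, and then pass to the limit in the dual formulation \eqref{def_eq} using the potential framework.

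First, I would approximate. Fix a reference point $x_\ast \in M$ and set $u_{0,n} := \min\{u_0, n\} \, \chi_{B_n(x_\ast)}$, so that each $u_{0,n}$ is nonnegative, bounded, compactly supported, and $u_{0,n} \uparrow u_0$ monotonically. Under Assumption \ref{general}, the Faber-Krahn / Nash / Sobolev inequalities guarantee that the operator $u \mapsto (-\Delta_M)^s(u^m)$ is $m$-accretive on $L^1(M)$ (it is the composition of the maximal monotone $L^1$ generator $(-\Delta_M)^s$ with the continuous monotone nonlinearity), so Crandall--Liggett (or the standard mild-solution theory for nonlinear semigroups) produces nonnegative mild solutions $u_n \in C^0([0,T];L^1(M))$, which are actually WDS in the sense of Definition \ref{defi_WDS} (this is an identity to be checked once, for bounded integrable data, and it follows by approximating the dual equation with smooth test functions and using that $(-\Delta_M)^{-s}$ is the true inverse of $(-\Delta_M)^s$ on $L^1\cap L^\infty$). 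Standard $L^1$-contraction / $T$-accretivity yields a comparison principle, and because $u_{0,n}$ is pointwise increasing in $n$, so is $u_n(t,\cdot)$ a.e.\ in $M$ for each $t$.

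Define $u(t,x) := \lim_n u_n(t,x)$; one must show that $u$ is finite a.e.\ and belongs to $C^0([0,T]; L^1_{x_0,\GM}(M))$ for every $x_0 \in M$. The key uniform estimate is obtained from the dual equation: since $u_n$ is a WDS and $u_n^m \ge 0$, the potential
\[
\Phi_n(t,x) := (-\Delta_M)^{-s} u_n(t,\cdot)(x) = \int_M \GM(x,y) \, u_n(t,y) \, \dg(y)
\]
is pointwise nonincreasing in $t$, hence $\Phi_n(t,x) \le \Phi_n(0,x) = (-\Delta_M)^{-s} u_{0,n}(x)$ for a.e.\ $x$. By monotone convergence in $n$, $\Phi_n(t,x) \uparrow \Phi(t,x) \le (-\Delta_M)^{-s} u_0(x)$, and the latter is finite thanks to the bounds on $\GM$ in \eqref{green-euc} together with the hypothesis $u_0 \in L^1_{\GM}(M)$ (splitting the integral on $B_1(x_0)$ and its complement exactly as in the definition \eqref{normLGx0}). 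Testing this against $\chi_{B_\sigma(x_0)}$ and invoking Lemma \ref{lem.Phi.estimates} in the form recalled in Remark \ref{remdef} gives the uniform bound $\|u_n(t)\|_{L^1_{x_0,\GM}} \le C\, \|u_0\|_{L^1_{x_0,\GM}}$. Time-continuity of $u(t,\cdot)$ in $L^1_{x_0,\GM}$ then follows from monotonicity in $t$ of $\Phi$ plus an $\varepsilon/3$ argument using the continuity of each $\Phi_n$ (which in turn comes from the $L^1$ continuity of $u_n$ and the Fubini computation in Remark \ref{remdef}).

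Finally, I would pass to the limit in \eqref{def_eq}. For the first term, $\partial_t \psi$ multiplies $(-\Delta_M)^{-s} u_n$, and by Fubini's theorem the integrand equals $\partial_t\psi \cdot \Phi_n$; since $\Phi_n \uparrow \Phi$ with $\Phi \in L^\infty_{loc}((0,T) \times M)$, monotone/dominated convergence pass the limit inside. For the nonlinear term $\int_0^T \int u_n^m \psi \, \dg \, dt$, one uses monotone convergence directly (since $u_n \uparrow u$, $u_n^m \uparrow u^m$) against the locally bounded test $\psi$; the outcome is finite because integrating the dual identity on $(0,T)$ against $\chi_{B_\sigma(x_0)}$ yields
\[
\int_0^T \int_{B_\sigma(x_0)} u^m \, \dg \, dt \le \int_{B_\sigma(x_0)} (-\Delta_M)^{-s} u_0 \, \dg < \infty,
\]
giving $u^m \in L^1((0,T); L^1_{loc}(M))$. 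The initial trace $u(0,\cdot) = u_0$ is inherited by monotone convergence from $u_n(0,\cdot) = u_{0,n}$. The main obstacle I anticipate is precisely the derivation of the uniform weighted bound and the justification that the limiting potential $\Phi$ is the true $(-\Delta_M)^{-s}u$ in a sense that allows using it as a test integrand; this is where Assumption \ref{general}, through the pointwise Green function estimate \eqref{green-euc} and the potential bounds of Lemma \ref{lem.Phi.estimates}, plays its decisive role.
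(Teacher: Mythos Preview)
Your overall strategy---monotone approximation $u_{0,n}=(u_0\wedge n)\chi_{B_n}$, mild solutions $u_n$ via Crandall--Pierre (shown to be WDS), comparison giving $u_n\uparrow u$, then passage to the limit in \eqref{def_eq}---is exactly the paper's. Two points are worth flagging.

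For the nonlinear term your route is \emph{more elementary} than the paper's. The paper obtains $u^m\in L^1((0,T);L^1_{loc})$ by first proving a smoothing effect (Proposition~\ref{smoothPhi-lemma}), then writing $\int u^m\le \|u\|_\infty^{m-1}\int u$ and using that $t^{-N(m-1)\vartheta_1}$ is integrable near $0$. Your idea of testing \eqref{limit-1} with $\psi=\chi_{B_\sigma(x_0)}$ to get $\int_0^T\!\int_{B_\sigma}u_n^m\le \int_{B_\sigma}(-\Delta_M)^{-s}u_0<\infty$ (finite by Remark~\ref{remdef}) and then invoking monotone convergence bypasses the smoothing machinery entirely. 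The paper needs Proposition~\ref{smoothPhi-lemma} anyway for Theorem~\ref{smoothPhi}, so the detour is not wasted work there, but for existence alone your argument is cleaner.

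Your description of the time-continuity step, however, is not quite right as written. Monotonicity in $t$ of $\Phi$ and continuity of each $\Phi_n$ do not by themselves run the $\varepsilon/3$ argument; what you actually need is the \emph{stability estimate for ordered differences} $\|u_m(t)-u_n(t)\|_{L^1_{x_0,\GM}}\le C\|u_{0,m}-u_{0,n}\|_{L^1_{x_0,\GM}}$ (this is \eqref{newmonotonicity2-bis} in the paper). It follows by applying the same dual-monotonicity you used for a single $u_n$ to the nonnegative difference $u_m-u_n$ (where $u_m^m-u_n^m\ge 0$), then invoking Lemma~\ref{potentials}. Once you have this, letting $m\to\infty$ gives $\|u(t)-u_n(t)\|_{L^1_{x_0,\GM}}\le C\|u_0-u_{0,n}\|_{L^1_{x_0,\GM}}\to 0$ uniformly in $t$, and since each $u_n\in C^0([0,T];L^1)\hookrightarrow C^0([0,T];L^1_{x_0,\GM})$, the $\varepsilon/3$ argument goes through. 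The paper phrases this as uniform equicontinuity plus Ascoli--Arzel\`a.
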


In this paper we do not address \emph{uniqueness} issues, but it is worth noticing that our WDS with data in $ L^1_{\GM}(M) $ are obtained as monotone limits of mild solutions in $ L^1(M) \cap L^\infty(M)$, generated by a monotone sequence of initial data. Hence, as in \cite{BV1}, it is not difficult to show that within this subclass, solutions are unique:
\begin{cor}[Uniqueness of limit WDS]\label{unique}
The WDS $u$ constructed in Theorem \ref{thm-existence} as a monotone limits of mild $ L^1(M) \cap L^\infty(M)$ solutions, does not depend on the particular choice of the monotone approximating sequence of initial data.
\end{cor}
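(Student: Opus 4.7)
The plan is to exploit two standard features of the mild semigroup on $L^1(M)\cap L^\infty(M)$ built in Section \ref{smooth}, namely the $L^1$-order preservation (comparison principle) and the $L^1$-contraction property. Granting these, the statement follows from a by-now classical interleaving argument, essentially the one of \cite{BV1}.

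Let $\{u_{0,n}\}$ and $\{\tilde u_{0,n}\}$ be two nondecreasing sequences of nonnegative data in $L^1(M)\cap L^\infty(M)$, both converging to $u_0$ in the sense used in the proof of Theorem \ref{thm-existence} (pointwise a.e.\ and in $L^1_{x_0,\GM}$). Denote by $u_n(t),\tilde u_n(t)$ the corresponding mild solutions, and set $u(t):=\lim_n u_n(t)$, $\tilde u(t):=\lim_n \tilde u_n(t)$, the two monotone limits. The aim is to show $u=\tilde u$.

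For each pair $(n,m)$ consider the auxiliary datum
\[
w_{n,m}:=\min\{u_{0,n},\tilde u_{0,m}\}\in L^1(M)\cap L^\infty(M),
\]
and let $v_{n,m}(t)$ be the corresponding mild solution. Since $w_{n,m}\le \tilde u_{0,m}$, the comparison principle yields $v_{n,m}(t)\le \tilde u_m(t)$ a.e.\ for all $t>0$. Moreover, for $n$ fixed, $w_{n,m}\nearrow u_{0,n}$ as $m\to\infty$ in $L^1(M)$: pointwise this is clear because $\tilde u_{0,m}\nearrow u_0\ge u_{0,n}$, and the $L^1$ convergence follows by monotone convergence since $w_{n,m}\le u_{0,n}\in L^1(M)$. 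By $L^1$-contraction of the semigroup we then get $v_{n,m}(t)\to u_n(t)$ in $L^1(M)$, hence along a subsequence a.e. Passing to the a.e.\ limit in the comparison above (recall that $\tilde u_m(t)\nearrow \tilde u(t)$) gives $u_n(t)\le \tilde u(t)$ a.e. Letting $n\to\infty$ yields $u(t)\le \tilde u(t)$ a.e., and the reverse inequality follows by symmetry, so $u(t)=\tilde u(t)$ for every $t>0$.

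I do not expect a real obstacle here, since all the required inputs belong to the standard $m$-accretive theory in $L^1(M)$ and are already implicit in the construction of Section \ref{smooth}; the only points worth checking carefully are that the minimum $w_{n,m}$ is an admissible datum for the mild semigroup (true since $L^1\cap L^\infty$ is preserved by taking pointwise minima) and that the $L^1$-contraction, together with the comparison principle, indeed hold up to the boundary of the $L^1\cap L^\infty$ class used in the approximation. The corollary is therefore a soft consequence of the semigroup framework already set up in the existence proof.
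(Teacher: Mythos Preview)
Your argument is correct but follows a genuinely different route from the paper. The paper applies the weighted stability estimate \eqref{newmonotonicity2-bis} in $L^1_{x_0,\GM}$ directly to the pair $(u_n,v_k)$ and passes to the limit first in $n$ and then in $k$, using that both approximating sequences of data converge to $u_0$ in $L^1_{x_0,\GM}$. You instead stay entirely within the plain $L^1$ framework: the auxiliary datum $w_{n,m}=\min\{u_{0,n},\tilde u_{0,m}\}$ lets you compare with $\tilde u_m$ via the order-preserving property and recover $u_n$ via the $L^1$-contraction, yielding a one-sided inequality and then equality by symmetry. Your approach is more elementary in that it uses only the Crandall--Pierre inputs already recorded in Proposition \ref{BCPP-Thm}, and it has the pleasant feature of never needing the two approximating solutions to be ordered with respect to each other (a hypothesis of \eqref{newmonotonicity2-bis}). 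The paper's approach, on the other hand, works directly in the topology in which the limit WDS is actually constructed, which is more in line with the rest of the theory. A minor remark: in your step 4 the convergence $v_{n,m}(t)\to u_n(t)$ is in fact monotone in $m$ (since $w_{n,m}$ is), so no subsequence is needed for the a.e.\ passage to the limit.
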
\vspace{-2mm}

Let us define the exponent
\begin{equation*}
\vartheta_1:=\frac{1}{2s+N(m-1)} \, ,
\end{equation*}
and state our $L^1$-$L^{\infty}$ smoothing estimates.

\begin{thm}[Smoothing effects for data in $ L^1 $]\label{thm.smoothing.HN-like}
Let $ \hn $ satisfy Assumption \ref{general}. Let $u$ be the WDS to \eqref{NFDE}, constructed in Theorem \ref{thm-existence}, corresponding to any nonnegative initial datum $u_0 \in L^1(\hn)  $. Then there exists $ C = C (N,k,c,s,m)>0$ such that
\begin{equation}\label{S-1}
\left\| u(t) \right\|_{L^\infty(M)} \le C \left( \frac{\left\| u(t) \right\|_{L^1(M)}^{2s \vartheta_1}}{t^{N \vartheta_1}} \vee \left\| u_0 \right\|_{L^1(M)} \right) \le C \left( \frac{\left\| u_0\right\|_{L^1(M)}^{2s \vartheta_1}}{t^{N \vartheta_1}} \vee \left\| u_0\right\|_{L^1(M)} \right) \qquad \forall t > 0 \, .
\end{equation}	

If, in addition, $M$ satisfies Assumption \ref{ch}, then for some $ C=C(N,s,m)>0 $ we have
\begin{equation}\label{S-2}
\left\| u(t) \right\|_{L^\infty(M)} \le C  \, \frac{\left\| u(t) \right\|_{L^1(M)}^{2s \vartheta_1}}{t^{N \vartheta_1}}  \le C \, \frac{\left\| u_0\right\|_{L^1(M)}^{2s \vartheta_1}}{t^{N \vartheta_1}}  \qquad \forall t > 0 \, ,
\end{equation}	
Furthermore, if $M$ also satisfies Assumption \ref{neg} (and $ u_0 \not \equiv 0 $), then
\begin{equation}\label{thm.smoothing.HN-like.estimate.2}
\left\| u(t) \right\|_{L^\infty(\hn)}  \leq \frac{C}{t^{\frac{1}{m-1}}} \left[ \log\!\left(t \left\| u_0 \right\|_{L^1(\hn)}^{m-1} \right)\right]^{\frac{s}{m-1}} \qquad \forall t \ge  e^{(N-1)(m-1)\sqrt{\c}} \left\| u_0 \right\|_{L^1(\hn)}^{-(m-1)}  ,
\end{equation}
for another $C=C(N,s,\c,m)>0$.
\end{thm}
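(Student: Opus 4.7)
The plan is to combine the Benil\'an--Crandall time-monotonicity estimate with the dual formulation \eqref{dual-idea} and the Green-function bounds available under our geometric assumptions, following the Green-function strategy referenced in the introduction. By Theorem~\ref{thm-existence} every WDS is the monotone limit of mild $L^1(\hn)\cap L^\infty(\hn)$ solutions generated by smooth, compactly supported approximations; since all the bounds in \eqref{S-1}--\eqref{thm.smoothing.HN-like.estimate.2} are monotone in the data, it suffices to prove them for such regular solutions and then pass to the limit. The heart of the argument is a pointwise inequality: Benil\'an--Crandall gives that $\sigma\mapsto(\sigma-\tau)^{1/(m-1)}u(\sigma,x)$ is nondecreasing in $\sigma\ge\tau$, while integrating the dual equation from $\tau$ to $t$ and dropping the nonnegative term at $t$ yields $(-\Delta_{\hn})^{-s}u(\tau,x)\ge\int_\tau^t u^m(\sigma,x)\,\rd\sigma$. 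Combining these produces
\[
u(t,x)^m\le\frac{C_m}{t-\tau}\,(-\Delta_{\hn})^{-s}u(\tau,x),\qquad 0\le\tau<t,
\]
with $C_m=(2m-1)/(m-1)$; the choice $\tau=t/2$ reduces everything to estimating $\|(-\Delta_{\hn})^{-s}u(t/2)\|_{L^\infty(\hn)}$.

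Under Assumption~\ref{general} we estimate the potential by splitting $(-\Delta_{\hn})^{-s}v(x)$ into contributions from $B_R(x)$ and its complement, using the Euclidean-type bound \eqref{green-euc} together with the local volume estimate \eqref{uppervolume}: for $R\le 1$ one obtains $(-\Delta_{\hn})^{-s}v(x)\le C(\|v\|_\infty R^{2s}+R^{-(N-2s)}\|v\|_1)$. Optimizing in $R$ (admissible when $\|v\|_1\le\|v\|_\infty$) yields $C\|v\|_\infty^{1-2s/N}\|v\|_1^{2s/N}$, otherwise one takes $R=1$ and bounds by $C\|v\|_1$. Coupled with the $L^1$ contraction $\|u(t/2)\|_1\le\|u_0\|_1$, this produces
\[
\|u(t)\|_{L^\infty}^m\le\frac{C}{t}\left(\|u(t/2)\|_{L^\infty}^{1-\frac{2s}{N}}\|u_0\|_1^{\frac{2s}{N}}\vee\|u_0\|_1\right).
\]
Setting $g(t):=t^{N\vartheta_1}\|u(t)\|_{L^\infty}$, the algebraic identity $\vartheta_1[N(m-1)+2s]=1$ converts the first regime into $g(t)^m\le C\|u_0\|_1^{2s/N}g(t/2)^{1-2s/N}$; taking $\sup_{t\le T}$, which is finite because $u\in L^\infty$, closes the bootstrap and yields $g\le C\|u_0\|_1^{2s\vartheta_1}$, while the trivial bound $\|u(t)\|_{L^\infty}\le\|u_0\|_{L^\infty}\le\|u_0\|_1$ in the complementary regime provides the floor $C\|u_0\|_1$ in \eqref{S-1}. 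Under the stricter Assumption~\ref{ch}, the sharper global Green function estimate \eqref{Hyp.Green.RN2} from Section~\ref{potential} validates the near/far split at \emph{every} $R>0$, so the floor disappears and the same iteration delivers \eqref{S-2}.

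For the logarithmic improvement under Assumption~\ref{neg}, the key input from Section~\ref{potential}, namely \eqref{Hyp.Green.HN0b}, is that on strictly negatively curved Cartan--Hadamard manifolds the Green function decays exponentially at infinity and balances the volume growth in such a way that $\int_{B_R(x)}\GM(x,y)\,\dg(y)\sim R^s$ for large $R$, while $\sup_{r(x,y)\ge R}\GM(x,y)$ decays exponentially with rate $(N-1)\sqrt{\c}$. The near/far split then gives $(-\Delta_{\hn})^{-s}v(x)\le C(\|v\|_\infty R^s+e^{-(N-1)\sqrt{\c}\,R}\|v\|_1)$ for every $R\ge 1$, and optimizing at $R\sim[(N-1)\sqrt{\c}]^{-1}\log(\|v\|_1/\|v\|_\infty)$ produces the logarithmic interpolation $(-\Delta_{\hn})^{-s}v(x)\le C\|v\|_\infty\bigl[\log(\|v\|_1/\|v\|_\infty)\bigr]^s$. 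Inserted into the basic pointwise inequality with $\tau=t/2$, coupled with $L^1$-contraction and with the monotonicity of $\|u(t)\|_{L^\infty}$, this yields
\[
\|u(t)\|_{L^\infty}^m\le\frac{C}{t}\,\|u(t/2)\|_{L^\infty}\left[\log\!\left(\frac{\|u_0\|_1}{\|u(t/2)\|_{L^\infty}}\right)\right]^{s},
\]
and the asymptotic self-similar ansatz $\|u(t)\|_{L^\infty}=A\,t^{-1/(m-1)}[\log(t\|u_0\|_1^{m-1})]^{s/(m-1)}$ balances both sides: matching the powers of $t$ fixes the algebraic exponent $1/(m-1)$, matching the powers of the logarithm selects precisely $s/(m-1)$, and the threshold $t\ge e^{(N-1)(m-1)\sqrt{\c}}\|u_0\|_1^{-(m-1)}$ is identified as the time from which the logarithm is positive and dominates the sub-leading corrections.

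The main obstacle I anticipate is precisely this logarithmic bootstrap: tracking how the halving $t\mapsto t/2$ interacts with the argument of the nested logarithm, and pinpointing the correct threshold for $t$, demands careful book-keeping, in particular to turn the self-similar matching into a rigorous sup-over-time argument analogous to the one used in the $L^1$ case. Apart from that, the proof follows the (by now standard) dual-Green strategy, whose adaptation to the curved nonlocal setting is made possible by the potential-comparison lemmas of Section~\ref{potential}.
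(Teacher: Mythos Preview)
Your overall strategy---the Green function method with a near/far split on balls of radius $R$, then optimization in $R$---is exactly the paper's approach. However, there are two points where you diverge from the paper in ways that create unnecessary difficulties.

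First, the direction of the B\'enilan--Crandall inequality: BC monotonicity says $\sigma^{1/(m-1)}u(\sigma,x)$ is nondecreasing, which gives a \emph{lower} bound on $u(\sigma,x)$ in terms of the \emph{earlier} time $t_0\le\sigma$, not the later time $t$. Hence your stated derivation of $u(t,x)^m\le\frac{C_m}{t-\tau}(-\Delta_{\hn})^{-s}u(\tau,x)$ via bounding $\int_\tau^t u^m(\sigma,x)\,\rd\sigma$ from below by $u^m(t,x)$ does not work as written. The correct version (Proposition~\ref{crucial} and \eqref{thm.smoothing.HN-like.proof.1}) bounds $u^m$ at time $t_0$ in terms of the potential of $u$ at the \emph{same} time $t_0$:
\[
u^m(t_0,x_0)\le \frac{2^{\frac{m}{m-1}}}{t_0}\int_{\hn}u(t_0,x)\,\GM(x,x_0)\,\dg(x).
\]
This eliminates the need for any $t\mapsto t/2$ iteration: after the near/far split the paper simply applies Young's inequality to the near term $(I)$, obtaining $\tfrac{1}{m}\|u(t_0)\|_\infty^m$ which is absorbed into the left-hand side, and then optimizes $R$ directly. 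The ``logarithmic bootstrap'' you flag as the main obstacle disappears entirely: under Assumption~\ref{neg} the paper inserts the integral bound $\int_{B_R}\GM\le CR^s$ and the pointwise decay \eqref{Hyp.Green.HN0b} into the same-time inequality, obtaining \eqref{thm.smoothing.HN-like.proof.9}, and the explicit choice $R=[(N-1)\sqrt{\c}]^{-1}\log(t_0^{1/(m-1)}\|u_0\|_1)$ yields \eqref{thm.smoothing.HN-like.estimate.2} in one step, with the threshold coming from $R\ge1$.

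Second, and minor: your floor argument for \eqref{S-1} relies on ``$\|u_0\|_\infty\le\|u_0\|_1$'', which is false in general. The paper instead observes that the optimizing $R$ in \eqref{ee5} satisfies $R\le1$ precisely for $t_0\le t_*:=\|u_0\|_1^{-(m-1)}$; evaluating the smoothing bound at $t_*$ gives $\|u(t_*)\|_\infty\le C\|u_0\|_1$, and the $L^\infty$-nonexpansivity of the semigroup propagates this for all $t\ge t_*$.
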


  \begin{rem}\begin{itemize}
\item  In the case where $ \hn $ is a Cartan-Hadamard manifold, in fact Assumption \ref{general} in Theorem \ref{thm.smoothing.HN-like} is unnecessary. Indeed, the $L^1$-semigroup theory provides a suitable solution to \eqref{NFDE}, which belongs to $ C^0([0,T];L^1(M)) $, regardless of functional or curvature assumptions on $ M $. The latter come into play when one wants to ``invert'' the equation and pass to a dual formulation as in \eqref{dual-idea}. However, the validity of the crucial bounds \eqref{Hyp.Green.RN2} and \eqref{Hyp.Green.RN}, which are for free on any Cartan-Hadamard manifold, is all one needs in order to prove existence of WDS and the above smoothing effects when $ u_0 \in L^1(\hn) $.
\item An analogue of the bound \eqref{S-2}, involving the $L^p$ norm of the initial datum for $p\ge2$, is shown in \cite{RS}, for a smaller class of data, namely those in $L^1\cap L^p$ for some $p\ge2$, but under more general assumptions on $M$. 
\item The long-time decay of solution provided by \eqref{thm.smoothing.HN-like.estimate.2} is faster than the Euclidean one, Assumption \ref{neg} being crucial for this to hold. That for large time a faster decay overtakes the one valid in the Euclidean framework is an effect of negative curvature, which somehow increases the speed of propagation and produces a better decay rate (see e.g.~\cite{GM,GMV,GMV-MA,V4}).

    \end{itemize}
\end{rem}

When enlarging the class of allowed initial data, i.e.~when dealing with the space $L^1_{\GM}(M)$ in place of $ L^1(M) $,  we obtain the following $L^1_{\GM}$-$L^{\infty}$ smoothing estimates.

\begin{thm}[Smoothing effects for data in $L^1_{\GM}$]\label{smoothPhi} Let $M$ satisfy Assumption \ref{general}.
Let $u$ be the WDS to \eqref{NFDE}, constructed in Theorem \ref{thm-existence}, corresponding to any nonnegative initial datum $u_0 \in L^1_{\GM}(M) $. Then there exist $ C_1=C_1(N,k,c,s,m)>0$ and $ C_2=C_2(N,k,c,s,m)>0 $ such that  	
\begin{equation}\label{thm.smoothing.HN-like.estimate3}
\left\| u(t) \right\|_{L^\infty(M)} \le C_1 \left( \frac{\left\| u(t) \right\|_{ L^1_{\GM} }^{2s \vartheta_1}}{t^{N \vartheta_1}} \vee \left\| u_0 \right\|_{ L^1_{\GM} } \right) \le C_2 \left( \frac{\left\| u_0\right\|_{ L^1_{\GM} }^{2s \vartheta_1}}{t^{N \vartheta_1}} \vee \left\| u_0\right\|_{ L^1_{\GM} } \right) \qquad \forall t > 0 \, .
\end{equation}
If, in addition, $M$ satisfies Assumption \ref{ch} (and $ u_0 \not \equiv 0 $), then there exists $ C_3=C_3(N,k,s,m)>0 $ such that
\begin{equation}\label{thm.smoothing.HN-like.estimate4}
\left\| u(t) \right\|_{L^\infty(M)} \le C_3 \, \dfrac{\left\| u_0 \right\|_{L^1_{\GM}}^{\frac1m}}{t^{\frac1m}} \qquad \forall t \ge \left\| u_0 \right\|_{{L^1_{\GM}}}^{-(m-1)} .
\end{equation}
\end{thm}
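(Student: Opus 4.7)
The plan is to exploit the dual formulation \eqref{dual-idea} of the FPME combined with pointwise Green-function estimates, adapting to our noncompact manifold setting the strategy developed in \cite{BV1,BFV} for bounded Euclidean domains. Since the WDS $u$ is constructed as a monotone limit of mild $L^1\cap L^\infty$-solutions (Theorem \ref{thm-existence} and Corollary \ref{unique}), I will prove the estimates first for such smoother approximations and pass to the monotone limit at the end.

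The key starting point, common to both bounds, is the fundamental pointwise inequality obtained by combining the B\'enilan--Crandall monotonicity (which, for the mild approximations, yields that $\tau\mapsto \tau^{1/(m-1)}u(\tau,\cdot)$ is non-decreasing) with an integration of the dual equation \eqref{dual-idea} on $[t,2t]$:
\begin{equation*}
t\,u^m(t,x) \,\le\, C_m\, (-\Delta_{\hn})^{-s}u(t,x) \qquad \text{for a.e.\ } x\in \hn,\ t>0.
\end{equation*}
I then split the potential as $(-\Delta_{\hn})^{-s}u(t,x) = \int_{B_1(x)} u(t,y)\GM(x,y)\,\dg(y) + \int_{\hn\setminus B_1(x)} u(t,y)\GM(x,y)\,\dg(y)$ and use \eqref{green-euc} together with the volume control \eqref{uppervolume} to obtain $\int_{B_1(x)}\GM(x,y)\,\dg(y)\le C$; the local contribution is thus bounded by $C\,\|u(t)\|_{L^\infty(\hn)}$, while the tail is bounded by $\|u(t)\|_{L^1_{x,\GM}}\le \|u(t)\|_{L^1_\GM}$. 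Monotonicity of $(-\Delta_{\hn})^{-s}u(\cdot,x)$ in $t$ (via the dual equation) combined with a standard $L^1$-type contraction for the unit-ball part of the weighted norm further gives $\|u(t)\|_{L^1_\GM}\le C\|u_0\|_{L^1_\GM}$, leading to the polynomial inequality
\begin{equation*}
t\,\|u(t)\|_{L^\infty(\hn)}^m \,\le\, C_1\,\|u(t)\|_{L^\infty(\hn)} + C_2\,\|u_0\|_{L^1_\GM}.
\end{equation*}

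For the finer bound \eqref{thm.smoothing.HN-like.estimate3}, I would run a Moser-type iteration based on the Faber--Krahn/Sobolev inequality \eqref{FK}: multiplying the equation by $u^{p-1}$ and exploiting a Stroock--Varopoulos-type inequality to estimate $\int u^{p-1}(-\Delta_{\hn})^s u^m\,\dg$ from below by a fractional Dirichlet form, one closes an $L^{p_k}$-to-$L^{p_{k+1}}$ iteration whose limit $p_k\to\infty$ produces the sharp scaling exponents $N\vartheta_1$ and $2s\vartheta_1$. The weighted $L^1_\GM$ norm enters only in the initialization step, via the pointwise estimate above; the maximum with $\|u_0\|_{L^1_\GM}$ in the statement reflects the self-referential $\|u(t)\|_\infty$ term in the polynomial inequality, which forces a trivial a priori control in a short-time regime. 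Once \eqref{thm.smoothing.HN-like.estimate3} is at hand, the refined estimate \eqref{thm.smoothing.HN-like.estimate4} follows by a one-step bootstrap: if $t\ge \|u_0\|_{L^1_\GM}^{-(m-1)}$, the right-hand side of \eqref{thm.smoothing.HN-like.estimate3} reduces to $C\|u_0\|_{L^1_\GM}$, so under the sharper Green-function bounds available on Cartan--Hadamard manifolds the $\|u(t)\|_\infty$ term in the polynomial inequality is absorbed into the left-hand side, yielding $\|u(t)\|_\infty \le C(\|u_0\|_{L^1_\GM}/t)^{1/m}$.

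The main obstacle will be propagating the weighted $L^1_\GM$ norm through the Moser iteration while preserving the correct scaling: unlike in the standard $L^1$-case, the natural conservation/contraction properties of the flow must be made compatible with the sup-over-$x_0$ structure of $\|\cdot\|_{L^1_\GM}$. This is essentially the technical heart of the proof and should follow from a careful combination of the monotonicity of the potential with the uniform Green-function bound \eqref{green-euc} and the lemmas on potentials of test functions established in Section \ref{potential}. The Cartan--Hadamard refinement leading to \eqref{thm.smoothing.HN-like.estimate4}, though structurally simpler, still relies crucially on sharp two-sided Green-function bounds that are available only under Assumption \ref{ch}.
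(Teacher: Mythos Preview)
Your fundamental pointwise estimate $t\,u^m(t,x)\le C_m\,(-\Delta_M)^{-s}u(t,x)$, obtained from B\'enilan--Crandall monotonicity and the dual equation, is exactly the paper's starting point (it is the left inequality in \eqref{main-estimate-1} with $t_1=2t_0$). The construction via monotone approximation and passage to the limit is also correct.

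The genuine gap is in your route to \eqref{thm.smoothing.HN-like.estimate3}. You split the potential at the \emph{fixed} radius $R=1$ and then propose a Moser iteration to recover the sharp exponents $N\vartheta_1,\,2s\vartheta_1$. This is precisely the step the paper avoids, and for good reason: a Moser iteration needs to be initialized from a global $L^p$ norm, and for data merely in $L^1_{\GM}(M)$ one does not have $u(t)\in L^1(M)$ (nor $L^p(M)$ for any finite $p$), even after the crude $L^\infty$ bound is in force. The paper itself remarks in the introduction that such Moser-type methods, used in \cite{dPQRV2,RS}, do not seem to adapt to the class $L^1_{\GM}$. Your acknowledgement that ``propagating the weighted $L^1_\GM$ norm through the Moser iteration'' is the technical heart is accurate, but there is no mechanism offered to do it; the sup-over-$x_0$ structure of $\|\cdot\|_{L^1_{\GM}}$ is not compatible with the multiplicative $L^p$ bootstrapping.

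The paper's approach is simpler and bypasses this completely: it splits the potential at a \emph{variable} radius $R\in(0,1]$, bounds the inner piece via Young's inequality and the estimate $\int_{B_R(x_0)}\GM(x,x_0)\,\dg\le CR^{2s}$ (this is \eqref{Hyp.Green.RN.01}), bounds the outer piece by $C R^{-(N-2s)}\|u(t)\|_{L^1_{\GM}}$ (this is \eqref{newmonotonicity}), and then \emph{optimizes} by choosing $R=\big(t^{1/(m-1)}\|u(t)\|_{L^1_{\GM}}\big)^{(m-1)\vartheta_1}$. This single optimization step yields the exponents $N\vartheta_1$ and $2s\vartheta_1$ directly, with no iteration. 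The maximum with $\|u_0\|_{L^1_{\GM}}$ in \eqref{thm.smoothing.HN-like.estimate3} arises not from a self-referential $\|u(t)\|_\infty$ term, but from the constraint $R\le 1$ on the admissible radii under Assumption~\ref{general}.

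Your treatment of \eqref{thm.smoothing.HN-like.estimate4} is closer to the paper's. The polynomial inequality you derive with $R=1$ is essentially what the paper uses there; the role of Assumption~\ref{ch} is to ensure $\int_{B_R(x_0)}\GM(x,x_0)\,\dg\le CR^{2s}$ for \emph{all} $R>0$ (via \eqref{Hyp.Green.RN}), so the splitting is valid without the small-radius restriction and the choice $R=1$ balances the two pieces optimally in the large-time regime.
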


  \begin{rem}
\noindent\begin{itemize}
\item  The bounds of Theorem \ref{smoothPhi} are new even in ${\mathbb R}^N$. It is remarkable that the short time bound \eqref{thm.smoothing.HN-like.estimate3} shows the same time dependence as the one established in the $L^1$ setting in \cite{dPQRV2}.
\item The above theorems apply also   in the prototype   case of the hyperbolic case. Theorem \ref{smoothPhi} fixes an error in Theorem 2.4 of \cite{BBGG}, in which the bound \eqref{thm.smoothing.HN-like.estimate4} is incorrectly stated for all times, by providing the right short-time behaviour. It also provides the correct functional space associated to the Green function there. We also comment that a similar remark on the functional space applies also to Theorem 2.3 of \cite{BBGG}, although we shall not pursue analogues of that result here.

\end{itemize}
\end{rem}

\section{Fractional green function and potential estimates}\label{potential}
In this section we collect some fundamental estimates for fractional Green functions and potentials, which will have a crucial role when dealing with WDS.

\subsection{Comparison between Green functions and between integrals of Green functions}
We establish key inequalities comparing the Green function on $\hn$ (and its integrals over geodesic balls), subject to the curvature bound \eqref{sec}, with the Green function on the associated space form $M_{\c}$ (i.e.~the hyperbolic space of constant curvature $ -\c $).
\begin{lem}\label{Lemma.ComparisonGreen}
Let $M$ satisfy Assumption \ref{neg}, and let $M_{\c}$ be the $N$-dimensional space form of constant (sectional) curvature equal to $-\c$, $\mu_{M_\c}$ its volume measure and $\GMcc$ its fractional Green function. Then, for all $r>0$ and all $o\in M$, we have
\begin{equation}\label{comp1}
\int_{B_r(o)}\GM(x,o)\, \dg(x)\le
\int_{B_r(o_\c)}\GMcc(x,o_\c)\, \dgcc(x)\,,
\end{equation}
where  $o_\c $ stands for any pole in $ M_\c $ and $ B_r(o_\c) \subset M_\c $ for the geodesic ball of radius $ r $   centered   at $o_\c$. Furthermore, we also have that
\begin{equation}\label{comp2}
\GM(x,y)\le  \GMcc(x_\c,y_\c)
\end{equation}
for all $x,y\in M$ and   all $x_\c,y_\c\in M_\c$ such that $r_M(x,y)=r_{M_\c}(x_\c,y_\c)$. 
\end{lem}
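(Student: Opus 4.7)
The plan is to reduce both inequalities to statements at the heat-kernel level via the subordination formula \eqref{fract} for the fractional Green function, and then to treat \eqref{comp2} and \eqref{comp1} by two different comparison tools. It is important to note from the outset that \eqref{comp1} does \emph{not} follow from \eqref{comp2} by a naive volume estimate: under $\mathrm{sec}_M \le -\c$ the surface area $|\partial B_\rho(o)|$ dominates $|\partial B_\rho(o_\c)|$ (by Rauch/Bishop), so the smallness of $\GM$ and the largeness of the volume element on $M$ genuinely compete, and a finer argument is needed.

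For \eqref{comp2}, \eqref{fract} gives $\GM(x,y) = \int_0^\infty t^{s-1} k_M(t,x,y)\, \dt$ and analogously for $\GMcc$. It therefore suffices to establish the pointwise heat-kernel comparison
\[
k_M(t,x,y) \;\le\; k_{M_\c}(t,x_\c,y_\c) \qquad \text{whenever } \quad r_M(x,y) = r_{M_\c}(x_\c,y_\c),
\]
which on Cartan-Hadamard manifolds with $\mathrm{sec} \le -\c$ is the classical Debiard-Gaveau-Mazet theorem. Integration against the positive weight $t^{s-1}$ preserves the inequality and yields \eqref{comp2}.

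For \eqref{comp1}, Fubini's theorem combined with \eqref{fract} gives
\[
\int_{B_r(o)} \GM(x,o) \, \dg(x) = \int_0^\infty t^{s-1}\, F_M(t,r) \, \dt, \qquad F_M(t,r) := \int_{B_r(o)} k_M(t,o,x) \, \dg(x),
\]
and the analogous identity in $M_\c$ with $F_{M_\c}(t,r)$. Since $F_M(t,r) = \mathbb{P}^o[\rho_t < r]$, where $\rho_t := r_M(X_t,o)$ and $X_t$ is the Brownian motion on $M$ started at $o$, it suffices to show $F_M(t,r) \le F_{M_\c}(t,r)$ for every $t,r > 0$. Since $M$ is Cartan-Hadamard the distance function is globally smooth off $\{o\}$, so It\^o's formula yields $d\rho_t = dW_t + \tfrac{1}{2}(\Delta_M r)(X_t)\, \dt$ for a one-dimensional Brownian motion $W_t$; by Laplacian comparison under $\mathrm{sec}_M \le -\c$, the drift dominates $\tfrac{N-1}{2}\sqrt{\c}\coth(\sqrt{\c}\,\rho_t)$, which is precisely the drift of the radial part of Brownian motion on $M_\c$. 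A pathwise SDE comparison theorem then gives $\rho_t \ge R_t^{\c}$ almost surely after coupling, hence $F_M(t,r) \le F_{M_\c}(t,r)$, and integration in $t$ against $t^{s-1}$ yields \eqref{comp1}.

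The main obstacle lies in \eqref{comp2}: the pointwise heat-kernel comparison on a general Cartan-Hadamard manifold (as opposed to a rotationally symmetric model) is delicate, and I would rely on the classical Debiard-Gaveau-Mazet result (or an equivalent Kendall-type stochastic coupling) rather than attempt a direct analytic derivation in the fractional setting, as the natural PDE approach via a transplanted supersolution $h(x):= \gc(r_M(x,o))$ is hard to adapt to the nonlocal operator $(-\Delta_M)^s$. By contrast, \eqref{comp1} bypasses this pointwise comparison entirely: working at the level of radial laws, the SDE comparison absorbs in one stroke both the smallness of the heat kernel and the largeness of the volume form on $M$.
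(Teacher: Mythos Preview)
Your proposal is correct, and for \eqref{comp2} it coincides with the paper's argument: the paper also reduces to the pointwise heat-kernel comparison under $\mathrm{sec}\le -\c$ (citing Grigor'yan's version rather than Debiard--Gaveau--Mazet, but these are the same statement) and integrates against $t^{s-1}$.

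For \eqref{comp1} the two proofs share the same first step---Fubini reduces the claim to $F_M(t,r)\le F_{M_\c}(t,r)$ for the heat-kernel mass in a ball---but then diverge. You establish this inequality \emph{stochastically}: It\^o's formula for the radial process $\rho_t$, the Hessian/Laplacian comparison $\Delta_M r \ge (N-1)\sqrt{\c}\coth(\sqrt{\c}\,r)$ for the drift, and an SDE comparison to couple $\rho_t\ge R_t^{\c}$. The paper instead argues \emph{analytically}: it interprets $F_M(t,r)=U_M(t,o)$ where $U_M$ solves the heat equation with datum $\chi_{B_r(o)}$, then shows that the transplanted model solution $U_{M_\c}$ is a supersolution on $M$ via the Hessian comparison theorem. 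This supersolution step requires that $U_{M_\c}(t,\cdot)$ be radially nonincreasing, and the paper spends most of its effort proving this monotonicity (by differentiating in $r$, obtaining a Schr\"odinger equation with nonnegative potential for $\partial_r U_{M_\c}$, and running a parabolic maximum principle through an approximation by smooth compactly supported data on bounded balls). Your route bypasses this monotonicity argument entirely: the SDE comparison packages both the geometry (larger drift) and the competing volume growth into a single pathwise inequality, yielding a shorter proof at the cost of importing probabilistic machinery. The paper's route is more self-contained but noticeably longer, and in return makes explicit the PDE mechanism (transplanted supersolution) that is only implicit in the coupling.
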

\begin{proof}
Let us consider the following Cauchy problem for the heat equation on $ \hn $:
\begin{equation}\label{eqM}
\begin{cases}
\partial_t u =\Delta_M u & \mbox{ in }  {(0,+\infty) \times M} \, , \\
u(0,\cdot)=\chi_{B_r(o)} & \mbox{ in }M \, .
\end{cases}
\end{equation}
The solution of   this   problem is the function $U_M$ given by the explicit formula
\begin{equation*}\label{solM}
U_M(t,x)=\int_M k_M(t,x,y) \, \chi_{B_r(o)}(y)\,{\rm d}\mu_M(y) = \int_{B_r(o)}k_M(t,x,y)\,{\rm d}\mu_M(y) \, ,
\end{equation*}
where we recall that $k_M$ is the (minimal) heat kernel on $M$. Besides, in view of \eqref{fract}, we have that
\begin{equation}\label{intgreen}\begin{aligned}
\int_{B_r(o)}\GM(y,o)\,{\textrm d}\mu_M(y)& =\int_{B_r(o)}\left(\int_0^{+\infty}\frac{k_M(t,y,o)}{t^{1-s}} \, {\rm d}t\right) \, {\rm d}\mu_M(y)\\
&=\int_0^{+\infty}\frac1{t^{1-s}}\left(\int_{B_r(o)}k_M(t,y,o)\,{\rm d}\mu_M(y)\right)\, \rm{d}t
\end{aligned}\end{equation}
by Fubini's theorem (note that all the functions involved are positive and the integrals involved are finite, since $M$ is $s$-nonparabolic).

Now we observe that the quantity $\int_{B_r(o)}k_M(t,y,o)\,{\rm d}\mu_M(y)$ appearing in the last integral on the r.h.s.~of \eqref{intgreen} coincides with $U_M(t,o)$ (upon recalling that $ k_M $ is symmetric w.r.t.~space variables). We will now compare such solution for all $x$, hence for $x=o$ as well, with the  {radial}   solution of a suitable problem posed in $M_{\c}$. More precisely, let us consider problem \eqref{eqM} with $M$ is replaced by $M_\c$ and $o$ replaced by $o_c$, and according to the above notation let $U_{M_\c}$ denote the corresponding solution. We claim that $U_{M_\c}$, transplanted on $M$,   that is
$$
U_{M_\c} \equiv U_{M_\c}(t,r_M(x,o)) \, ,
$$
is a \emph{supersolution} to \eqref{eqM}. This will be a direct consequence of the \emph{Hessian comparison theorem} and the fact that $U_{M_\c}(t,\cdot)$ is nonincreasing as a function of the geodesic distance $r_{M_\c}(x,o_\c )$ on $M_\c$ (see e.g.~\cite[Subsection 2.2]{GMV}). What follows aims at proving such delicate monotonicity property.

Firstly, it is readily seen that $U_{M_\c}(t,\cdot)$ is a function of $r_{M_\c}$ only,  since the initial datum is radial by construction and $M_\c$ is a spherically-symmetric manifold (i.e.~a \emph{model manifold}). In order to show that $U_{M_\c}(t,\cdot)$ is also \emph{nonincreasing} as a function of $ r = r_{M_\c}$ we notice that, by the explicit expression of the metric on $M_\c$, the Laplacian evaluated on functions $f$ depending only on $r $ reads
\[
\Delta_{M_{\c}} f(r)=\frac{\partial^2f}{\partial r^2}+(n-1)\sqrt{\c} \, \coth\!\left(\sqrt{\c} r\right)\frac{\partial f}{\partial r} \, .
\]
Given $R_1, T>0$, consider now the solution $u$ to the Cauchy-Dirichlet problem
\begin{equation}\label{eee-1}
\begin{cases}
\partial_t u =\Delta_{M_{\c}} u &\quad \text{in }  {(0,T) \times B_{R_1}(o_\c)} \, , \\
u=0 & \quad \text{on }  {(0,T) \times \partial B_{R_1}(o_\c)} \, , \\
u(0,r)=u_0(r) & \quad \text{in }  B_{R_1}(o_\c)  \, ,
\end{cases}
\end{equation}
where $u_0$ is smooth, nonnegative, not identically zero, radial, nonincreasing and compactly supported in $ B_{R_1}(o_\c)  $. By standard parabolic theory, it follows that $u$ depends on $(t,r)$ only, is strictly positive in  {$(0,T) \times B_{R_1}(o_\c)$} and smooth in  {$(0,T) \times \overline{B}_{R_1}(o_\c)$}. Therefore, we have that  ${\partial_r u}(t,R_1) \le 0$   for all $t\in(0,T)$. Moreover, in view of the smoothness of $u$, we also have that   ${\partial_r u}(t,0)=0$   for all $t\in(0,T)$. A direct computation shows that the radial derivative $v := \partial_r u$ satisfies the parabolic equation
\[
\partial_t v =\Delta_{M_{\c}} v-\frac{\c(n-1)}{\sinh^2(\sqrt{\c}r)} \, v \qquad \text{for } (t,r) \in (0,T) \times (0,R_1)  \, ,
\]
and $v$ is nonpositive on the parabolic boundary because of the previous assumptions and considerations. The Schr\"odinger operator $-\Delta_{M_{\c}}+\frac{\c(n-1)}{\sinh^2(\sqrt{\c}r)}$, defined on its natural form domain (that takes into account the homogeneous Dirichlet boundary condition at $r=0$), generates a Markov semigroup for which, in particular, the {comparison principle} holds. As a result, we can infer that $ v \le 0  $ at all times, which means that $ r \mapsto   u(t,r) $ is nonincreasing for every $t\in(0,T)$. In fact, this can be rigorously justified by considering the problem in $(0,T) \times (\epsilon,R_1) $, noting that $v(t,\epsilon)\le c_\epsilon\to0$ uniformly in $t$ as $\epsilon\to0$ since $v$ is continuous down to $r=0$ because $u$ is smooth, and controlling $v$ via the supersolution $c_\epsilon$.

Finally, we pick an arbitrary $ R_1>r $ and take an increasing sequence $ \{ u_{0,n} \}$ of initial data which are smooth, nonnegative, not identically zero, radially nonincreasing and compactly supported in $ B_{r}(o_\c)  $ and converge pointwise to $ \chi_{B_r(o_\c)} $ as $ n \to \infty $. Let $ \{ u_{R_1,n} \} $ denote the corresponding sequence of  solutions to \eqref{eee-1}, with $ u_0 $ replaced by $u_{0,n} $. By what we have just shown, each $  u_{R_1,n} (t,\cdot)  $ is radially nonincreasing, and by standard comparison principles the family is monotone increasing with respect to both $ R_1 $ and $n$. Passing to the limit first as $ n \to \infty $ and then as $ R_1 \to + \infty $, we obtain (monotone) pointwise convergence to $ U_{M_\c}(t,\cdot)$, which therefore preserves such radial monotonicity properties (note that $ t <T $ but $T>0$ is arbitrary), and the proof of \eqref{comp1} is complete.

As concerns \eqref{comp2}, we just point out that, in view of \eqref{fract}, it is a straightforward consequence of the comparison between the heat kernels of $ \hn $ and $ \hn_\c $ (which holds due to \eqref{sec}, see e.g.~\cite[Theorem 4.2]{G}).   Note that, by assumption, the cut locus of any point in $M$ is empty, so that \cite[Theorem 4.2]{G} is indeed applicable.
\end{proof}

It is worth writing down more explicitly the consequences of Lemma \ref{Lemma.ComparisonGreen}, separating the Euclidean and hyperbolic cases.

$\bullet$ \textbf{Pseudo Euclidean case:} ${\rm sec}(M)\le 0$, that is, the manifold is allowed to have flat parts. In this case the associated space form is $M_\c=M_0=\RR^N$. Even though, for simplicity, we stated and proved Lemma \ref{Lemma.ComparisonGreen} only when $ \c > 0 $, it remains true for $ \c=0 $ as well, with inessential changes in the proof. Hence, \eqref{comp2} yields
\begin{equation}\label{Hyp.Green.RN2}
\GM(x,y)\le \mathbb{G}_{\R^N}^s(x_\c,y_\c) \le\frac{C}{|x_\c-y_\c|^{N-2s}}= \frac{C}{r(x,y)^{N-2s}} \qquad \forall x , y \in \hn : \, x \neq y \, ,
\end{equation}
whereas \eqref{comp1} yields
\begin{equation}\label{Hyp.Green.RN}
\int_{B_R(y)} \GM(x,y) \, \dg(x) \le  C \int_{B_R(y_\c)}\frac{1}{|x-y_\c|^{N-2s}} \, \dx = C R^{2s} \qquad \text{for all $ y \in \hn $ and all $ R>0 $}\, ,
\end{equation}
for some $C>0$ only depending on $N,s$.

\smallskip

$\bullet$ \textbf{Pseudo hyperbolic case:} ${\rm sec}(M)\le - \c$ for a given $\c >0$, that is, the manifold has everywhere negative curvature. Roughly speaking, in this case $\hn$ is allowed to have hyperbolic parts, but not Euclidean ones. Therefore, when $\c=1$ the associated space form is $M_\c=M_1=\HH^N$, while for a general $\c $ it is the $N$-dimensional hyperbolic space of constant sectional curvature $-\c$. By combining Lemma \ref{Lemma.ComparisonGreen} with \cite[Lemma 3.1 and Corollary A.2]{BBGG}, where proper estimates for  $\GH$ were derived, we can infer that there exists $C>0$, only depending on $N,s, \c$, such that
\begin{equation}\label{Hyp.Green.HN0}
\GM(x,y)\le \GMcc(x_\c,y_\c)\le \frac{C}{r_{\hn_\c}(x_\c,y_\c)^{N-2s}}=\frac{C}{r(x,y)^{N-2s}} \qquad \forall x,y \in \hn : \, x \neq y \, .
\end{equation}
However, when $r(x,y)\ge 1$, the latter estimate can significantly be improved as follows:
\begin{equation}\label{Hyp.Green.HN0b}
\GM(x,y)\le \GMcc(x_\c,y_\c)\le C\, \frac{ e^{-(N-1)\sqrt{ \c} \, r_{\hn_\c}(x_\c, y_\c)}}{r_{\hn_\c}(x_\c, y_\c)^{1-s}} = C\, \frac{ e^{-(N-1)\sqrt{ \c} \, r(x, y)}}{r(x, y)^{1-s}}  \qquad \forall x,y \in \hn : \, r(x,y) \ge 1 \, .
\end{equation}
As a consequence, direct computations in radial coordinates on the hyperbolic space show that
\begin{equation*}
\int_{B_R(y)} \GM(x,y) \,  \dg(x) \le \int_{B_R(y_\c)} \GMcc(x,y_\c) \, \dgcc(x) \le C R^{2s} \qquad \text{for all $ 0 < R \le 1 $} \, ,
\end{equation*}
and
\begin{equation}\label{Hyp.Green.HN2}
\int_{B_R(y)}\GM(x,y) \, \dg(x) \le  \int_{B_R(y_\c)} \GMcc(x,y_\c) \, \dgcc(x) \le C R^s \qquad \text{for all $ R \ge 1 $} \, ,
\end{equation}
for every $ y \in \hn $ and some $C>0$ only depending on $N,s, \c$.  {The first estimate follows from \eqref{Hyp.Green.HN0}, while the second is a consequence of \eqref{Hyp.Green.HN0b}, recalling that $ \dgcc(x) \equiv \sinh(\sqrt{\c} \, {r(x,y_\c)})^{N-1} \, \mathrm{d} r $.}

\subsection{Comparison between potentials and Green functions}
We now provide fundamental two-sided estimates, aimed at comparing the potential of a bounded and compactly supported function with the Green function itself (i.e.~the potential of a Dirac delta).

 \begin{lem}\label{lem.Phi.estimates}
Let $M$ satisfy Assumption \ref{general}. Let $ \psi \in \mbox{L}_c^{\infty}(\hn)$ be a nonnegative and nontrivial function such that $\operatorname{supp}( \psi)\subseteq B_{\sigma}(x_0)$ for some $0<\sigma<1$ and $x_0 \in M$. Then there exist two constants $\underline C=\underline C(N,k,c,s)>0$ and $ \overline C= \overline C(N,k,c,s)>0$ such that
\begin{equation}\label{boundgreen}
\underline C \left\| \psi \right\|_1 \left(1\wedge r(x_0,x)^{N-2s}\right) \GM(x, x_0) \leq  (-\Delta_{\hn})^{-s} \psi  (x) \leq \overline C\, \|\psi\|_{\infty} \, \sigma^N  \, \GM(x,x_0)  \quad \forall x\in \hn \setminus \{ x_0 \} \, .
\end{equation}
 \end{lem}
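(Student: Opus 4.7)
I plan to split the argument based on whether $x$ is ``far from'' or ``close to'' $x_0$, using the threshold $r(x,x_0)=2\sigma$. Two ingredients will be doing most of the work: (i) the pointwise upper bound \eqref{green-euc}, $\GM(x,y)\le C\,r(x,y)^{2s-N}$, together with a matching local lower bound $\GM(x,x_0)\ge c\,r(x,x_0)^{2s-N}$ whenever $r(x,x_0)\le 2$; and (ii) a Harnack-type comparison $\GM(x,y)\asymp \GM(x,x_0)$ on $B_\sigma(x_0)$ whenever $r(x,x_0)\ge 2\sigma$. Both ingredients follow from two-sided short-time heat kernel estimates on bounded balls; under Assumption \ref{general}, the Ricci lower bound provides local volume doubling (Bishop--Gromov) and local Poincar\'e (Buser), while the Faber--Krahn inequality ensures the Nash/Sobolev inequality, so the parabolic Harnack inequality holds on balls of bounded radius in the Saloff-Coste--Grigoryan framework, with constants depending only on $N,k,c,s$.

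\textbf{Upper bound.} In the far case $r(x,x_0)\ge 2\sigma$, ingredient (ii) yields $\GM(x,y)\le C\GM(x,x_0)$ for every $y\in B_\sigma(x_0)$; integrating against $\psi$ and using the upper volume bound \eqref{uppervolume} (valid since $\sigma<1$) gives
\[(-\Delta_{\hn})^{-s}\psi(x)\le C\|\psi\|_\infty\,\GM(x,x_0)\,\mu_M(B_\sigma(x_0))\le C'\|\psi\|_\infty\,\sigma^N\,\GM(x,x_0).\]
In the close case $r(x,x_0)<2\sigma$, the inclusion $B_\sigma(x_0)\subset B_{3\sigma}(x)$ together with \eqref{green-euc} and a layer-cake computation against \eqref{uppervolume} yield
\[(-\Delta_{\hn})^{-s}\psi(x)\le C\|\psi\|_\infty\int_{B_{3\sigma}(x)}r(x,y)^{2s-N}\,\dg(y)\le C'\|\psi\|_\infty\,\sigma^{2s}.\]
To match the target, I then invoke ingredient (i) at the pair $(x,x_0)$: since $r(x,x_0)<2\sigma<2$, we have $\GM(x,x_0)\ge c\,\sigma^{2s-N}$, so that $\sigma^{2s}\le c^{-1}\sigma^N\GM(x,x_0)$, as desired.

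\textbf{Lower bound and main obstacle.} In the far case, (ii) also supplies $\GM(x,y)\ge c\GM(x,x_0)$ on $B_\sigma(x_0)$, and therefore
\[(-\Delta_{\hn})^{-s}\psi(x)\ge c\|\psi\|_1\,\GM(x,x_0)\ge c\|\psi\|_1\,\bigl(1\wedge r(x,x_0)^{N-2s}\bigr)\GM(x,x_0),\]
which is in fact stronger than required. In the close case $r(x,x_0)<2\sigma$, the pointwise upper bound on $\GM(x,x_0)$ shows that $(1\wedge r(x,x_0)^{N-2s})\,\GM(x,x_0)\le C$ uniformly in $x$, so it suffices to prove $(-\Delta_{\hn})^{-s}\psi(x)\ge c\|\psi\|_1$ for $x\in B_{2\sigma}(x_0)\subset B_2(x_0)$. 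I will obtain this by integrating the two-sided heat kernel bound on the bounded region $B_2(x_0)$: parabolic Harnack combined with the Bishop--Gromov estimate $\mu_M(B_{\sqrt{t}}(x))\le C\,e^{(N-1)\sqrt{k}\sqrt{t}}$ gives $k_M(t,x,y)\ge c\,e^{-C\sqrt{t}}/t^{N/2}$ for $x,y\in B_2(x_0)$ and $t$ sufficiently large, and integration in $t$ against $t^{s-1}$ yields $\GM(x,y)\ge c_0>0$ uniformly on $B_2(x_0)\times B_1(x_0)$, hence $(-\Delta_{\hn})^{-s}\psi(x)\ge c_0\|\psi\|_1$. The main technical obstacle is precisely the derivation, under Assumption \ref{general} alone, of those two-sided heat kernel estimates and of the Harnack comparison for $\GM(x,\cdot)$; tracking the exponential factors coming from negative curvature through the $t$-integration, and checking that all constants depend only on $N,k,c,s$, is the most delicate step.
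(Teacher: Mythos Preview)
Your approach is correct and close in spirit to the paper's, but the packaging differs enough to warrant a brief comparison. You split at the moving threshold $r(x,x_0)=2\sigma$ and rely on a single black-box Harnack comparison $\GM(x,y)\asymp\GM(x,x_0)$ for $y\in B_\sigma(x_0)$; the paper instead splits at the \emph{fixed} thresholds $r(x,x_0)=1$ (lower bound) and $r(x,x_0)=2$ (upper bound), and never states a Green-function Harnack as such. Rather, it works directly at the heat-kernel level: the Li--Yau inequality gives $k_M(t\mp1,x,x_0)\lessgtr C\,k_M(t,x,y)$ for $t\ge2$ and $y\in B_\sigma(x_0)$, which after integration against $t^{s-1}$ controls the large-$t$ part of the potential by $\GM(x,x_0)$; the small-$t$ tail ($t\le1$) is then shown to be negligible by playing the global Gaussian upper bound \eqref{gaussian} against Sturm's global lower bound $k_M(t,x,x_0)\ge c\,t^{-N/2}e^{-r^2/4t-c_3r-c_4t}$. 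The close-range cases are handled exactly as you do (layer-cake plus the two-sided estimate $\GM\asymp r^{2s-N}$ for $r\le2$).

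One point in your write-up is loose: you say ingredient (ii) follows from ``two-sided short-time heat kernel estimates on bounded balls'', but when $r(x,x_0)$ is large the Green function is dominated by the large-$t$ regime, and local/short-time estimates alone cannot yield the comparison. What is actually needed---and what the paper invokes explicitly---is the \emph{global} Sturm lower bound (from the Ricci assumption) together with the \emph{global} Gaussian upper bound (from Faber--Krahn); only then can one check that the $t\le2\sigma^2$ contribution, of order $e^{-c\,r(x,x_0)^2}$, is swallowed by $\GM(x,x_0)\gtrsim e^{-C\,r(x,x_0)}$. You seem aware of this in your final paragraph, but it is precisely the step that does the work, so it should not be attributed to local parabolic Harnack. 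Apart from this, your argument goes through and yields the same constants' dependence.
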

 \begin{proof}
We start by elaborating some bounds for the heat kernel $k_{\hn}$, that we will exploit in \eqref{fract-bis}. From Li-Yau estimates, see in particular \cite[Theorem 2.2(ii)]{LY} (note that, to our purposes, $ q=0 $ w.r.t.~their notations), we know that if $v$ is a positive solution to the heat equation on $ \hn $, then there exist constants $c_0,c_1,c_2,\beta>0$, which only depend on $N$ and $k $ in \eqref{ric}
such that
$$
v(t_1,x_1) \leq c_0 \left(\frac{t_2}{t_1} \right)^{\beta} v(t_2,x_2) \, e^{c_1\frac{r(x_1,x_2)}{t_2-t_1}+c_2(t_2-t_1)}
$$
for all $0<t_1<t_2<3$ and all $x_1, x_2 \in \hn$. Given any $x,y \in \hn $ and $ t \ge 2 $, this inequality applied to the solution $v(\cdot, \cdot)=k_{\hn}( \cdot +t-2,x,\cdot)$, with the choices $x_1=x_0$, $x_2=y$, $t_1=1$ and $t_2=2$, yields
$$
k_{\hn}(t-1,x,x_0) \leq c_0 \, 2^{\beta} \,  k_{\hn}(t,x,y) \, e^{c_1r(y,x_0)+c_2}\qquad \text{for all } t\geq 2 \text{ and all } x,y \in \hn \, ,
$$
which, in turn, entails
$$k_{\hn}(t-1,x,x_0) \leq C\, k_{\hn}(t,x,y) \qquad \text{for all } t\geq 2 \text{ and all $x,y \in\hn$: }  r(y,x_0) \leq \sigma \, , $$
for a suitable $C=C(N,k)>0$. Inserting such inequality into \eqref{fract-bis}, and recalling the support properties of $ \psi $, we get
\begin{equation}\label{4}
\begin{aligned}
 (-\Delta_{\hn})^{-s}  \psi (x) = & \, \int_{B_{\sigma}(x_0)} \psi(y) \int_0^{+\infty}  \frac{ k_{\hn}(t,x,y)}{t^{1-s}} \,{\rm{d}t } \, \dg(y) \\
  \geq & \,  C  \left\| \psi \right\|_{1} \int_1^{+\infty}  \frac{ k_{\hn}(t,x,x_0)}{t^{1-s}} \, {\rm{d}t } \qquad \forall x\in \hn \, ,
\end{aligned}
\end{equation}
for another  $ C>0 $ as above. In order to prove the lower bound in \eqref{boundgreen}, we need to appropriately estimate the rightmost integral in \eqref{4} for every $ x \in \hn $.

First we focus on the case $r(x,x_0)\geq 1$. Thanks to  \eqref{ric}, by \cite[Corollary 4.2 and Remark 4.4]{Sturm} the following estimate holds:
\begin{align}\label{6}
k_{\hn}(t,x,x_0) \geq \frac{c_5}{t^{\frac N 2}} \, e^{-\frac{r(x,x_0)^2}{4t}-c_3r(x,x_0)-c_4 t} \qquad \text{for all } t>0 \text{ and all }  x\in \hn \, ,
\end{align}
where $c_3,c_4,c_5>0$ only depend on $N,k$. In particular, from \eqref{6} we easily infer that
\begin{align}\label{7}
 \int_1^{+\infty}  \frac{ k_{\hn}(t,x,x_0)}{t^{1-s}} \,{\rm{d}t} \ge  \int_2^{+\infty}  \frac{ k_{\hn}(t,x,x_0)}{t^{1-s}} \,{\rm{d}t} \geq C\, e^{-\frac{r(x,x_0)^2}{8}-c_3r(x,x_0)}  \qquad \forall x\in \hn \, ,
\end{align}
for some $C=C(N,k,s)>0$. Moreover, from \eqref{gaussian} it follows that
\begin{equation}\label{5}
k_{\hn}(t,x,x_0)\le \frac C{t^\frac N2} \, e^{-\frac{r(x,x_0)^2}{5 t}} \qquad \text{for all } t>0 \text{ and all }  x\in \hn \, ,
\end{equation}
for some $ C=C(N,c)>0 $, where $ c $ is the constant appearing in \eqref{FK}. Since $r(x,x_0)^2 \geq \frac{5}{6} r(x,x_0)^2 + \frac{1}{6}$ if $r(x,x_0)\geq 1$, estimate \eqref{5} yields
\begin{align}\label{8}
 \int_0^{1}  \frac{k_{\hn}(t,x,x_0)}{t^{1-s}}  \,{\rm{d}t}\leq C\, e^{-\frac{r(x,x_0)^2}{6}} \int_0^{1} \frac{e^{-\frac{1}{30t}}}{t^{\frac{N}{2}+1-s}} \,{\rm{d}t}=C\, e^{-\frac{r(x,x_0)^2}{6}} \qquad \text{for all $x \in \hn$: }  r(x,x_0)\geq 1 \, ,
\end{align}
where now $C$ also depends on $ s $. Therefore, by combining \eqref{7} and \eqref{8}, we deduce that
$$ \int_0^{+\infty}  \frac{ k_{\hn}(t,x,x_0)}{t^{1-s}} \,{\rm{d}t}\leq C  \int_1^{+\infty}  \frac{ k_{\hn}(t,x,x_0)}{t^{1-s}} \,{\rm{d}t}$$
for another $C=C(N,k,c,s)>0$, which, plugged into \eqref{4}, gives (recall \eqref{fract})
\begin{align}\label{9}
 (-\Delta_{\hn})^{-s}  \psi (x) \geq  C \left\| \psi \right\|_1  \GM (x, x_0)    \qquad \text{for all $ x \in \hn $: }  r(x,x_0)\geq 1\,,
\end{align}
still for a suitable $C=C(N,k,c,s)>0$. Next we consider the case $r(x,x_0)< 1$. By combining \eqref{4} and \eqref{7} we immediately obtain
\begin{align}\label{10}
 (-\Delta_{\hn})^{-s}  \psi (x) \geq  C \left\| \psi \right\|_1   \qquad \text{for all $ x \in \hn $: }  r(x,x_0) < 1\,,
\end{align}
whereas, by virtue of \eqref{green-euc}, we have that
\begin{align}\label{11}
r(x,x_0)^{N-2s} \,  \GM (x, x_0) \leq C \qquad \forall x \in \hn \setminus \{ x_0 \} \, ,
\end{align}
where $C=C(N,k,c,s)>0$ is another constant as above. The lower bound in \eqref{boundgreen} is then a consequence of \eqref{9}, \eqref{10} and \eqref{11}.

Now we pass to the proof of the upper bound. Given any $x,y \in \hn $ and $ t \ge 1 $, still from Li-Yau estimates applied to $v(\cdot, \cdot)=k_{\hn}( \cdot +t-1,x,\cdot)$, $x_1=y$, $x_2=x_0$, $t_1=1$ and $t_2=2$, we have
$$
\begin{gathered}
k_{\hn}(t,x,y) \leq c_0 \, 2 \, ^{\beta} \, e^{c_1\sigma+c_2}\, k_{\hn}(t+1,x,x_0) = C \, k_{\hn}(t+1,x,x_0) \\
 \text{for all } t\geq 1 \text{ and all $x,y \in\hn$: }  r(y,x_0) \leq \sigma \, ,
 \end{gathered}
$$
where $C=C(N,k)>0$. We thus deduce that
\begin{equation}\label{12}
\begin{aligned}
\int_{B_{\sigma}(x_0)} \psi(y) \int_1^{+\infty}  \frac{ k_{\hn}(t,x,y)}{t^{1-s}} \,{\rm{d}t } \, \dg(y) \leq & \,  C \int_{B_{\sigma}(x_0)} \psi(y) \int_1^{+\infty}  \frac{ k_{\hn}(t+1,x,x_0)}{t^{1-s}} \,{\rm{d}t } \, \dg(y) \\
\leq & \,  C \left\| \psi \right\|_{\infty} \mu_M(B_{\sigma}(x_0))\,  \GM(x, x_0) \qquad \forall x \in \hn \, ,
\end{aligned}
\end{equation}
for another $C=C(N,k)>0$.

Let us assume first $r(x,x_0)\geq 2$; by \eqref{6} we have that
\begin{align}\label{r2}
k_{\hn}(t,x,y) \geq \frac{c_5}{t^{\frac N 2}} \, e^{-\frac{r(x,y)^2}{8}-c_3r(x,y)-c_4 t} \qquad \text{for all } t>2 \text{ and all }  x,y \in \hn \, .
\end{align}
From \eqref{r2}, we deduce that
\begin{align*}
\int_{B_{\sigma}(x_0)} \psi(y) \int_2^{+\infty} & \frac{ k_{\hn}(t,x,y)}{t^{1-s}} \,{\rm{d}t } \, \dg(y) \\
&\geq \left( \int_2^{+\infty}  \frac{ c_5 \,e^{-c_4 t}}{t^{\frac{N}{2}+1-s}} \,{\rm{d}t} \right) \left( \int_{B_\sigma(x_0)}  \,\psi(y)\,e^{-\frac{r(x,y)^2}{8}-c_3r(x,y)}\, \dg(y) \right)\\
&\geq \left( \int_2^{+\infty}  \frac{ c_5 \,e^{-c_4 t}}{t^{\frac{N}{2}+1-s}} \,{\rm{d}t} \right) e^{-6c_3^2} \left( \int_{B_\sigma(x_0)}  \,\psi(y)\,e^{-\frac{r(x,y)^2}{6}}\, \dg(y) \right)\\
&=C \left( \int_{B_\sigma(x_0)}  \,\psi(y)\,e^{-\frac{r(x,y)^2}{6}}\, \dg(y) \right) ,
\end{align*}
where $C=C(N,k,s)>0$. Furthermore, upon noticing that $r(x,x_0)\geq 2$ and $r(x_0,y)\leq \sigma< 1$ yield $r(x,y)> 1$, from \eqref{5} (with $ x_0 $ replaced by $y$) we get
\begin{align*}
k_{\hn}(t,x,y) \leq \frac C{t^\frac N2} \, e^{-\frac{r(x,y)^2}{5 t}}  \leq \frac C{t^\frac N2} \,  e^{-\frac{r(x,y)^2}{6}-\frac{1}{30t}} \qquad \text{for all } 0<t<1 \, ,
\end{align*}
for every such $ x $ and $ y $. As a result, we have
\begin{align*}
\int_{B_{\sigma}(x_0)} \psi(y) \int_0^{1} & \frac{ k_{\hn}(t,x,y)}{t^{1-s}} \,{\rm{d}t } \, \dg(y) \\
&\leq \left( \int_0^{1}  \frac{ C \,e^{-\frac{1}{30t}}}{t^{\frac{N}{2}+1-s}} \,{\rm{d}t} \right) \left( \int_{B_\sigma(x_0)}  \,\psi(y)\,e^{-\frac{r(x,y)^2}{6}}\, \dg(y) \right)\\
&=C \left( \int_{B_\sigma(x_0)}  \,\psi(y)\,e^{-\frac{r(x,y)^2}{6}}\, \dg(y) \right) \qquad \text{for all $ x \in \hn $: }  r(x,x_0) \ge 2 \, ,
\end{align*}
for another $C=C(N,k,c,s)>0$. Therefore, by combining the above estimates, we deduce in particular that
$$
\int_{B_{\sigma}(x_0)} \psi(y) \int_0^{+\infty}  \frac{ k_{\hn}(t,x,y)}{t^{1-s}} \,{\rm{d}t } \, \dg(y)  \le  C \,  \int_{B_{\sigma}(x_0)} \psi(y) \int_1^{+\infty}  \frac{ k_{\hn}(t,x,y)}{t^{1-s}} \,{\rm{d}t } \, \dg(y) \, ,
$$
whence, by virtue of \eqref{12},
\begin{align}\label{rlarge}
 (-\Delta_{\hn})^{-s}  \psi  (x) \leq C \left\| \psi \right\|_\infty \sigma^N \, \GM(x, x_0) \qquad \text{for all $ x \in \hn $: }  r(x,x_0) \ge 2 \, ,
\end{align}
where, by exploiting the curvature bound \eqref{ric}, we have used \eqref{uppervolume}, and $C>0$ is a possibly different constant still depending on $ N,k,c,s $ only.

In order to handle the case $r(x,x_0)< 2$, we observe that, as a consequence of \eqref{fract-bis} and \eqref{green-euc}, it holds
\begin{equation}\label{potpot}
(-\Delta_{\hn})^{-s} \psi  (x) \leq C \left\| \psi \right\|_\infty \int_{B_\sigma(x_0)} \, \frac{1}{r(x,y)^{N-2s}} \, \dg(y) \qquad \forall x \in \hn \, .
\end{equation}
Then, to complete the proof of the upper bound in \eqref{boundgreen}, we show that
\begin{equation}\label{int}
\int_{B_\sigma(x_0)} \frac{1}{r(x,y)^{N-2s}} \, \dg(y) \leq C \, \frac{\sigma^N}{r(x,x_0)^{N-2s}} \qquad \text{for all $ x \in \hn $: }  r(x,x_0) < 2 \, ,
\end{equation}
for a suitable $C=C(N,k,s)>0$. To this aim, let us first assume that $2 > r(x,x_0)\ge2\sigma$; the triangle inequality then yields
\[
r(x,y) \ge r(x,x_0) - \sigma\ge \frac 12 \, r(x,x_0) \qquad \text{for all }  y\in B_\sigma(x_0)\,.
\]
Therefore, in this case we have
\[
\int_{B_\sigma(x_0)} \frac{1}{r(x,y)^{N-2s}}\, \dg(y)\ \le 2^{N-2s} \, C\,\frac{\sigma^N}{r(x,x_0)^{N-2s}} \, ,
\]
where again we have used \eqref{uppervolume}, and $C$ is the positive constant appearing therein. If $r(x,x_0)<2\sigma$, we need to use some more delicate estimates. Let us denote by $\mathcal{H}_M^{N-1}$ the $(N-1)$-dimensional Hausdorff measure on $\hn$, and observe that $\mathcal{H}_M^{N-1}(\partial B_r(x))\le C r^{N-1}$ for all $r\in(0,3)$, all $ x \in \hn $ and a suitable $ C=C(N,k)>0 $ (this is a consequence of Bishop-Gromov -- see also the proof of Proposition \ref{smoothPhi-lemma} below). From $r(x,x_0)<2\sigma$ it follows that $B_\sigma(x_0)\subset B_{3\sigma}(x)$, hence, using the coarea formula, we get:
\begin{equation}\label{pot}\begin{aligned}
\int_{B_\sigma(x_0)} \frac{1}{r(x,y)^{N-2s}}  \, \dg(y) & \le \int_{B_{3\sigma}(x)}  \frac{1}{r(x,y)^{N-2s}}\, \dg(y)\\
&=\int_0^{3\sigma} \frac1{r^{N-2s}} \,\mathcal{H}_M^{N-1}(\partial B_r(x))\,{\textrm d}r\\&\le C \, \frac{3^{2 s}}{2 s} \, \sigma^{2s}
\leq C \, \frac{3^{2s} \, 2^{N-2s}}{2 s} \, \frac{\sigma^N}{r(x,x_0)^{N-2s}} \, .
\end{aligned}\end{equation}
Inequality \eqref{int} is therefore established. We finally prove the reverse bound
\begin{equation}\label{green-est-low}
\GM(x, x_0) \ge \frac{C}{r(x,x_0)^{N-2s}} \qquad \text{for all $ x \in \hn $: }  r(x,x_0) < 2 \, ,
\end{equation}
for some $C=C(N,k,s)>0$. To this end, it is convenient to exploit \eqref{6} along with the change of variable $t/r(x,x_0)^2=\tau$ (let $ x \neq x_0 $), as follows:
\[\begin{aligned}
\GM(x, x_0)&\ge c_5 \int_0^{+\infty} \frac{1}{t^{\frac N2+1-s}} \, e^{-\frac{r(x,x_0)^2}{4t}-c_3r(x,x_0)-c_4 t}  \, {\textrm d}t\\
&=\frac{c_5}{r(x,x_0)^{N-2s}}\int_0^{+\infty} \frac{1}{\tau^{\frac N2+1-s}} \, e^{-\frac{1}{4\tau}-c_3r(x,x_0)-c_4 \tau  r(x,x_0)^2}\,{\textrm d}\tau\\
& \geq \frac{c_5}{r(x,x_0)^{N-2s}}\int_0^{+\infty} \frac{1}{\tau^{\frac N2+1-s}} \, e^{-\frac{1}{4\tau}-2c_3-4c_4 \tau}\,{\textrm d}\tau\\
& = \frac{C}{r(x,x_0)^{N-2s}} \qquad \text{for all $ x \in \hn $: }  r(x,x_0) < 2 \, .
\end{aligned}
\]
By \eqref{potpot}, \eqref{int} and \eqref{green-est-low}, we may assert that
\begin{align*}
 (-\Delta_{\hn})^{-s}  \psi  (x) \leq C \left\| \psi \right\|_\infty \sigma^N \, \GM(x, x_0) \qquad \text{for all $ x \in \hn $: }  r(x,x_0) < 2 \, ,
\end{align*}
which, combined with \eqref{rlarge}, finally yields the upper bound in \eqref{boundgreen} and concludes the proof.
 \end{proof}

In the sequel we will also need a further technical result, which can easily be shown by using some of the estimates provided in the proof of Lemma \ref{lem.Phi.estimates}.

 \begin{lem}\label{potentials}
 Let $M$ satisfy Assumption \ref{general}, and let $x_0\in M$. There exist a nonnegative and nontrivial function $ \psi \in \mbox{L}_c^{\infty}(\hn)$ and two constants $c_1, c_2>0$, depending only on $ N,s,k,c  $ (in particular independent of $ x_0 $), such that for all nonnegative $f\in L^1_{\GM}(\hn)$ one has
\begin{equation}\label{quasi-pot}
c_1 \, \int_M f \, (- \Delta_{\hn})^{-s} \psi \,  {\rm d}\mu_M\le\left\| f\right\|_{L^1_{x_0,\GM}} \le c_2 \, \int_M f \, (- \Delta_{\hn})^{-s} \psi \,  {\rm d}\mu_M \, .
\end{equation}
 \end{lem}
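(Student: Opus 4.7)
The plan is to exhibit $\psi$ as an indicator of a small ball centered at $x_0$ and then use Lemma \ref{lem.Phi.estimates} to show that its fractional potential is pointwise comparable to the effective weight inside $\|\cdot\|_{L^1_{x_0,\GM}}$, namely to the constant $1$ on $B_1(x_0)$ and to $\GM(\cdot,x_0)$ on its complement. Concretely I would set $\psi:=\chi_{B_{1/2}(x_0)}$; although $\psi$ itself depends on $x_0$, the only quantities it enters through are $\|\psi\|_\infty=1$ and $\|\psi\|_1=\mu_M(B_{1/2}(x_0))$, and the latter is pinched between two positive constants depending only on $N,k,c$ thanks to the volume bounds \eqref{non-coll} and \eqref{uppervolume}.

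Next I would split $M$ into $B_1(x_0)$ and its complement and verify that $(-\Delta_M)^{-s}\psi(x)$ is $x_0$-uniformly comparable to $1$ on the former and to $\GM(x,x_0)$ on the latter. On $M\setminus B_1(x_0)$ both sides of \eqref{boundgreen} (with $\sigma=1/2$) already deliver this, since $r(x,x_0)\ge 1$ forces $1\wedge r(x,x_0)^{N-2s}=1$. On $B_1(x_0)$ the lower bound of \eqref{boundgreen} combined with the pointwise estimate \eqref{green-est-low} (established inside the proof of Lemma \ref{lem.Phi.estimates}) cancels the factor $r(x,x_0)^{N-2s}$ against $\GM(x,x_0)\gtrsim r(x,x_0)^{-(N-2s)}$, leaving a positive constant lower bound. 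For the matching upper bound I would start from \eqref{potpot} and reuse the coarea computation of \eqref{pot}: since $B_{1/2}(x_0)\subset B_{3/2}(x)$ whenever $x\in B_1(x_0)$, and since Bishop--Gromov under \eqref{ric} yields $\mathcal{H}^{N-1}_M(\partial B_r(x))\le C\, r^{N-1}$ uniformly on $r\in(0,3/2)$, I would conclude
\begin{equation*}
(-\Delta_M)^{-s}\psi(x) \le C\int_0^{3/2} r^{2s-1}\, \mathrm{d}r \le C,
\end{equation*}
uniformly in $x_0$.

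Once these two comparisons are in place, multiplying by a nonnegative $f$ and integrating over $M$, split along $\partial B_1(x_0)$, immediately produces \eqref{quasi-pot}, with $c_1,c_2$ assembled from the constants above. The only mildly subtle step is keeping every intermediate estimate genuinely $x_0$-uniform, but each ingredient — Li--Yau, Bishop--Gromov, Faber--Krahn, the Gaussian bound \eqref{gaussian}, and the Green function lower bound \eqref{green-est-low} — is base-point-free under Assumption \ref{general}, so no additional work should be required. I expect the main obstacle to be not an estimate but the bookkeeping of showing that the pointwise comparison $(-\Delta_M)^{-s}\psi(x)\asymp\chi_{B_1(x_0)}(x)+\GM(x,x_0)\,\chi_{M\setminus B_1(x_0)}(x)$ is a genuine two-sided bound with $x_0$-independent constants, which is exactly what Lemma \ref{lem.Phi.estimates} was engineered to guarantee.
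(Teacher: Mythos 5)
Your proposal is correct and follows essentially the same route as the paper: both take $\psi=\chi_{B_{1/2}(x_0)}$, use the two-sided bound of Lemma \ref{lem.Phi.estimates} (with $\|\psi\|_1$ controlled uniformly by \eqref{non-coll}) to compare $(-\Delta_\hn)^{-s}\psi$ to $\GM(\cdot,x_0)$ on $\hn\setminus B_1(x_0)$, and use \eqref{potpot}--\eqref{pot} together with the Green-function lower bound near the pole (equivalently, inequality \eqref{10}) to compare $(-\Delta_\hn)^{-s}\psi$ to $1$ on $B_1(x_0)$, then integrate against $f$ and add. The only cosmetic difference is that you re-derive the constant lower bound on $B_1(x_0)$ by cancelling $r^{N-2s}$ against \eqref{green-est-low}, whereas the paper cites the intermediate estimate \eqref{10} directly; the content is identical.
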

 \begin{proof}
Given the results of Lemma \ref{lem.Phi.estimates}, it is immediate to note that there exist $c_1, c_2>0$ as in the statement such that
\[
\begin{aligned}
c_1 \, \int_{\hn\setminus B_1(x_0)} f \, (- \Delta_{\hn})^{-s}\psi \, {\rm d}\mu_M&\le \int_{\hn\setminus B_1(x_0)} f \, \GM(\cdot ,x_0) \, \dg \\ &\le c_2 \, \int_{\hn\setminus B_1(x_0)} f \, (- \Delta_{\hn})^{-s} \psi \, {\rm d}\mu_M \, ,
\end{aligned}
\]
provided, for instance, one picks $ \psi = \chi_{B_{1/2}}(x_0) $. Clearly, in this case, $ \| \psi \|_\infty =1 $, whereas $ \| \psi \|_1 \ge C=C(N,c) > 0 $ thanks to the non-collapse bound \eqref{non-coll}. Besides, one sees from \eqref{10}, \eqref{potpot} and \eqref{pot} that there exist another two constants $c_1, c_2>0$ as in the statement such that
\[
c_1 \, \int_{B_1(x_0)} f \, (- \Delta_{\hn})^{-s} \psi \, {\rm d} \mu_M \le \int_{B_1(x_0)} f \, \dg \le c_2 \, \int_{B_1(x_0)} f \, (- \Delta_{\hn})^{-s} \psi \, {\rm d} \mu_M \, ,
\]
with the same $\psi $ chosen above. Finally, the two-sided bound \eqref{quasi-pot} follows by summing up the above estimates, upon recalling the definition of   the   $ L^1_{x_0,\GM}(\hn) $ norm given in \eqref{normLGx0}.
 \end{proof}

 \begin{rem}\label{generalR}
For later purposes, we point out that the upper bound in \eqref{boundgreen} can be extended to all $\sigma \ge 1$, up to letting the constant $ \overline{C} $ depend on $ \sigma $ as well. Indeed, first of all one notices that now the constant $C $ in \eqref{12} also depends on $ \sigma $ through Li-Yau estimates. Then it is enough to observe that all the inequalities below \eqref{r2}, that lead to \eqref{rlarge}, continue to hold by replacing $2$ (as a bound over $ r(x,x_0) $) with  $1+\sigma$. Estimate \eqref{rlarge} follows, recalling that also \eqref{uppervolume} holds for all $ 0 < r < \sigma $ and some uniform multiplying constant dependent on $\sigma$ as well. Moreover, since $\mathcal{H}_M^{N-1}(\partial B_r(x)) \le C r^{N-1}$ for all $r< 2\sigma+1$ and all $ x \in \hn $, with another constant $C>0$ as above that in addition depends on $\sigma$, the coarea formula still gives \eqref{pot} subject to $r(x,x_0) < \sigma+1 $. The same holds for \eqref{green-est-low}, again with $2$ replaced by $ \sigma+1 $.
\end{rem}

\section{On the class of initial data}\label{init-data}

We provide some explicit examples, first on specific space forms and then on general manifolds that fit into our setting, of functions belonging to $L^1_{x_0,\GM}(\hn)$ and to $L^1_{\GM}(\hn)$, showing in particular that the latter form strictly wider classes than $ L^1(M) $.

\subsection{Admissible decay for initial data in the space forms $  {M=\rn}$ and $ {M=\mathbb{H}^{N}} $} \label{examples}
 In order to highlight the admissible decay rate for the kind of initial data we deal with, we provide some sufficient conditions for a function to belong to $L^1_{\GM}(\hn)$.
\begin{prop}\label{decay}
Let either $M=\rn$ or $M=\mathbb{H}^{N}$, and let $ u_0 \in L^{\infty}(M)$. Then, sufficient conditions for $u_0$ to belong to $L^1_{\GM}(\hn)$ are the following:
\begin{itemize}
\item $M=\rn$ and $\left| u_0(x) \right| \leq \dfrac{C}{|x|^a}$ for all $|x|\geq R$, for some $C, R>0$ and $a>2s$;
\item $M=\mathbb{H}^{N}$ and $\left|u_0(x)\right|\leq  \dfrac{C}{(r(x,o))^a}$ for all $r(x,o) \geq R$, for some $o\in M$, $C, R>0$ and $a>s$.
\end{itemize}
\end{prop}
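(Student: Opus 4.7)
The plan is to reduce to estimating the tail integrals $\int_{M\setminus B_1(x_0)} |u_0(x)|\,\GM(x,x_0)\,\dg(x)$ uniformly in $x_0$, since the local contribution $\int_{B_1(x_0)}|u_0|\,\dg$ is immediately bounded by $\|u_0\|_\infty\,\mu_M(B_1(x_0))$, which is uniformly bounded in $x_0\in M$ by \eqref{uppervolume}. For the tail, I would write $|u_0(x)|\le \|u_0\|_\infty\,\chi_{B_R(o)}(x)+C\,r(x,o)^{-a}\chi_{M\setminus B_R(o)}(x)$, and show that both pieces produce integrals uniformly bounded in $x_0$.

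In the Euclidean setting I would use the sharp estimate $\GR(x,x_0)\asymp |x-x_0|^{-(N-2s)}$ and split the domain according to whether $x$ lies in $B_{2R}(0)$ (where $u_0$ is merely bounded), or in one of the three annular regions $|x|\ge 2|x_0|$, $|x|\le |x_0|/2$, or $|x_0|/2\le |x|\le 2|x_0|$ (for $|x_0|$ large). On $\{|x|\ge 2|x_0|\}$ one has $|x-x_0|\ge |x|/2$, so the integrand is dominated by $|x|^{-(a+N-2s)}$, integrable at infinity precisely when $a>2s$. On $\{|x|\le |x_0|/2\}$ one has $|x-x_0|\ge |x_0|/2$, reducing the estimate to $|x_0|^{-(N-2s)}\int_{2R}^{|x_0|/2} r^{-a+N-1}dr$, which yields either $|x_0|^{-(N-2s)}$ or $|x_0|^{2s-a}$, both bounded uniformly in $|x_0|$ when $a>2s$. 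Finally, on the annulus $|x|\asymp |x_0|$, one factors out $|x|^{-a}\lesssim |x_0|^{-a}$ and uses $\int_{B_{3|x_0|}(x_0)}|x-x_0|^{-(N-2s)}dx\lesssim |x_0|^{2s}$ (the Euclidean analogue of \eqref{Hyp.Green.HN2}), again giving a uniform bound. The bounded part of $u_0$ on $B_{2R}(0)$ is handled analogously using the local integrability of $|x-x_0|^{-(N-2s)}$.

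In the hyperbolic setting the key tool is the ball-integral estimate \eqref{Hyp.Green.HN2}, i.e.\ $\int_{B_\varrho(y)}\GH(x,y)\,\dgh(x)\le C\varrho^s$ for $\varrho\ge 1$, combined with the exponential decay \eqref{Hyp.Green.HN0b} of $\GH$ beyond $r(x,y)=1$. Here I would dyadically decompose with respect to $r(x,o)$: set $A_k=\{x:2^k\le 1+r(x,o)<2^{k+1}\}$ and estimate $I(x_0)\le C\sum_k 2^{-ak}\int_{A_k}\GH(x,x_0)\,\dgh(x)$. For indices with $2^{k+2}>\rho:=r(x_0,o)$, the set $A_k$ is contained in a ball of radius $\sim 2^k+\rho\lesssim 2^k$ around $x_0$, and \eqref{Hyp.Green.HN2} gives $\int_{A_k}\GH(x,x_0)\dgh\lesssim 2^{ks}$, so the dyadic sum converges provided $a>s$. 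For indices with $2^{k+2}\le\rho$, any $x\in A_k$ satisfies $r(x,x_0)\ge\rho-2^{k+1}\ge\rho/2$, so $\GH(x,x_0)\lesssim e^{-(N-1)\rho/2}\rho^{-(1-s)}$, and since $\mu_{\HH^N}(B_{2^{k+1}}(o))\lesssim e^{(N-1)2^{k+1}}\le e^{(N-1)\rho/2}$, each such integral is uniformly bounded, so the sum converges because $\sum 2^{-ak}<\infty$.

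The main conceptual obstacle is the hyperbolic case: a crude bound of the form $\int_{r(x,x_0)\ge 1}\GH(x,x_0)\dgh\lesssim\int_1^\infty r^{s-1}dr$ diverges, so one cannot simply factor out the weight. This forces the dyadic-in-$r(x,o)$ scheme above, which exploits \eqref{Hyp.Green.HN2} for the near part and the cancellation between exponential volume growth of $B_{2^k}(o)$ and exponential decay of $\GH$ for the far part. The threshold $a>s$ arising from the summability $\sum 2^{k(s-a)}<\infty$ is precisely the one stated, and matches the sharper tail behaviour of $\GH$ compared with the Euclidean Green function, which explains why the sharper $a>2s$ is needed only in the Euclidean regime.
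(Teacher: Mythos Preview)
Your Euclidean argument is essentially identical to the paper's: both split according to $|x|\lesssim|x_0|/2$, $|x|\gtrsim 2|x_0|$, and $|x|\asymp|x_0|$, and use the same estimates in each region.

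Your hyperbolic argument is correct but takes a genuinely different route. The paper passes to polar coordinates centered at $x_0$, observes that the exponential decay of $\GH$ in \eqref{Hyp.Green.HN0b} exactly cancels the exponential volume growth $(\sinh r)^{N-1}\sim e^{(N-1)r}$, and is left with the one-dimensional integral
\[
\int_1^\infty \frac{1}{|r-r(x_0,o)|^{a}}\,\frac{1}{r^{1-s}}\,\mathrm{d}r,
\]
which (after assuming w.l.o.g.\ $s<a<1$ and rescaling $\rho=r/r(x_0,o)$) is recognized as a convergent Beta-type integral times $r(x_0,o)^{s-a}$. Your dyadic approach instead decomposes in $r(x,o)$ and uses the integrated Green-function bound \eqref{Hyp.Green.HN2} for the ``near'' shells and the pointwise bound \eqref{Hyp.Green.HN0b} together with hyperbolic volume growth for the ``far'' shells. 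The paper's route is shorter and yields the explicit decay $r(x_0,o)^{s-a}$ of the tail integral, but it needs the reduction to $a<1$ to make the singularity at $r=r(x_0,o)$ integrable. Your argument avoids this artificial reduction and is more robust, since it only uses the averaged information encoded in \eqref{Hyp.Green.HN2}; the threshold $a>s$ appears transparently as the summability condition $\sum_k 2^{k(s-a)}<\infty$.
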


It is worth mentioning that, in both cases, initial data are allowed to decay considerably \emph{slower} than functions in $L^1(M)$; this fact is more evident in the non-flat case where functions in $L^1(\mathbb{H}^{N})$ are expected to decay faster than $e^{-r(x,o)(N-1)}$. For proving the above statement it is clearly enough to show that the special functions
\begin{equation*}\label{ua}
u_{a}(x):=
 \left \{ \begin{array}{ll}
   1 & \mbox{ for } x\in \rn: \, |x|\leq 1\,,  \\
 \dfrac{1}{|x|^a}  &  \mbox{ for } x \in \rn: \, |x|> 1 \, ,
\end{array}
\right. \quad \text{with } a>2s \, ,
\end{equation*}
and
\begin{equation*}\label{wa}
w_{a}(x):=
 \left \{ \begin{array}{ll}
   1 & \mbox{ for } x\in\mathbb{H}^{N}: \, r(x,o)\leq 1\, ,  \\
 \dfrac{1}{(r(x,o))^a}  &  \mbox{ for } x\in\mathbb{H}^{N}: \, r(x,o)> 1 \, ,
\end{array}
\right. \quad \text{with } a>s \text{ and } o \in \mathbb{H}^{N} \text{ fixed}\, ,
\end{equation*}
satisfy $u_a \in L^1_{\GR}(\rn)$ and $w_a \in L^1_{\GH}(\mathbb{H}^{N})$. Since $u_a \not\in L^1(\rn)$ (for $ a \le N $) and $w_a \not\in L^1(\mathbb{H}^{N})$, these examples also prove the strict inclusion $ L^1(\hn) \subsetneq L^1_{\GM}(\hn) $ for $M=\rn$ and $M=\mathbb{H}^{N}$ (see Subsection \ref{examples-bis} for more about such inclusions in a general setting).

\begin{proof}[Proof of Proposition \ref{decay}]
Let $M=\rn$. We aim at showing that $u_a \in L^1_{\mathbb{G}_{\R^N}^s}(\rn)$. Since
\begin{align*}
\sup_{x_0\in \rn}  \int_{B_1(x_0)} |u_a(x)| \, \dx \leq \omega_N< +\infty\,,
\end{align*}
where $\omega_N:= \mu_{\rn}(B_1(0))$, we only need to prove that
\begin{align}\label{uak}
\sup_{x_0\in \rn} \int_{\rn \setminus B_1(x_0)} |u_a(x)| \,  \mathbb{G}_{\R^N}^s(x,x_0) \,\dx < +\infty  \, .
\end{align}
To this end, we can write
\begin{equation}\label{k99}
\begin{aligned}
\int_{\rn \setminus B_1(x_0)} |u_a(x)| & \,  \mathbb{G}_{\R^N}^s(x,x_0)\,\dx  \\
&\leq  \, \int_{(\rnn \setminus B_1(x_0)) \cap B_1(0)} C \,\dx +  \int_{(\rn \setminus B_1(x_0)) \cap B_1^c(0)} |u_a(x)| \, \mathbb{G}_{\R^N}^s(x,x_0) \, \dx \\
&\leq \,  C \, \omega_N + \int_{(\rn \setminus B_1(x_0)) \cap B_1^c(0)} |u_a(x)| \, \mathbb{G}_{\R^N}^s(x,x_0) \, \dx \\
& \leq \,  C \, \omega_N + C \,  \int_{\rn} \frac{1}{\left(| x | +1 \right)^a}  \, \frac{1}{\left(| x-x_0 |+1\right)^{N-2s}} \, \dx \, ,
\end{aligned}
\end{equation}
for some $C=C(N,s,a)>0$. If   $ x \in B_{| x_0 |/2}(0) ,$   we have $ | x-x_0 | \ge | x_0 |/2 $, thus
$$
\begin{aligned}
\int_{  B_{| x_0 |/2}(0)   } \frac{1}{\left(| x | +1 \right)^a}  \, \frac{1}{\left(| x-x_0 |+1\right)^{N-2s}} \, \dx \le & \, \frac{C}{\left(| x_0 |+1\right)^{N-2s}} \int_{B_{| x_0 |/2}(0) } \frac{1}{\left(| x | +1 \right)^a} \, \dx \\
 \le & \, C  \, \frac{\left(| x_0 |+1\right)^{N-a}}{\left(| x_0 |+1\right)^{N-2s}} \le C \, ,
\end{aligned}
$$
since $ a >2s $ (and we can assume w.l.o.g.~that $ a < N $). On the other hand, if $ x \in B_{2| x_0 |}^c(0) $, we have $ | x-x_0 | \ge | x |/2 $, thus
$$
\int_{B_{2| x_0 |}^c(0) } \frac{1}{\left(| x | +1 \right)^a}  \, \frac{1}{\left(| x-x_0 |+1\right)^{N-2s}} \, \dx \le \int_{\rn} \frac{1}{\left(| x | +1 \right)^{N+a-2s}}  \, \dx \le C \, .
$$
Finally, if  $ x \in B_{2| x_0 |}(0) \setminus B_{| x_0 |/2}(0) $, we have $ | x | \ge | x_0 |/2 $ and therefore
$$
\begin{aligned}
\int_{ B_{2| x_0 |}(0) \setminus B_{| x_0 |/2}(0)} & \frac{1}{\left(| x | +1 \right)^a}  \, \frac{1}{\left(| x-x_0 |+1\right)^{N-2s}} \, \dx \\
 \le & \,  \frac{C}{\left(| x_0 | +1 \right)^a} \int_{ B_{3| x_0 |}(x_0) }  \frac{1}{\left(| x-x_0 |+1\right)^{N-2s}} \, \dx \le C \, \frac{\left(| x_0 | +1 \right)^{2s}}{\left(| x_0 | +1 \right)^a} \le C \, .
\end{aligned}
$$
By combining the last three estimates and \eqref{k99}, we finally obtain \eqref{uak}.

Now let $ M=\hnn $. In order to show that $w_a \in L^1_{\GH}(\mathbb{H}^{N})$, at first we notice again that
\begin{align*}
\sup_{x_0\in \hnn}  \int_{B_1(x_0)} |w_a| \, \dgh \leq \mu_{\hnn}(B_1(x_0))=C < +\infty \, ,
\end{align*}
since the volume measure on $ \hnn $ is invariant w.r.t.~any pole. Then we only need to prove that
\begin{align}\label{wa-k}
\sup_{x_0\in \hnn}  \int_{\hnn \setminus B_1(x_0)} |w_a(x)|\,  \GH(x,x_0) \, \dgh(x) < +\infty  \, .
\end{align}
To this aim, recall that \eqref{Hyp.Green.HN0b} with $ \c=1 $ reads
\begin{align}\label{Gestimate}
 \GH(x,x_0) \le C \, \frac{ e^{-(N-1) r(x, x_0)}}{r(x, x_0)^{1-s}} \qquad \text{for all }(x,x_0)\in \mathbb{H}^{N} : \, r(x, x_0) \geq 1 \, ,
 \end{align}
for some $C=C(N,s)>0$. In particular, this yields
\begin{align*}
\int_{\hnn \setminus B_1(x_0)} |w_a(x)| & \,  \GH(x,x_0)\,\dgh(x) \\
&\leq  \int_{(\hnn \setminus B_1(x_0)) \cap B_1(o)} C\, \dgh +  \int_{(\hnn \setminus B_1(x_0)) \cap B_1^c(o)} |w_a(x)| \,  \GH(x,x_0)\,\dgh(x) \\
&\leq C \, \mu_{\hnn}(B_1(o)) +  \int_{(\hnn \setminus B_1(x_0)) \cap B_1^c(o)} |w_a(x)| \, \GH(x,x_0)\,\dgh(x) \, .
\end{align*}
In order to estimate the last integral, it is not restrictive to assume $1>a>s$. When $r(x_0,o) \ge  1$, by passing to polar coordinates centered at $x_0$, using the triangle inequality and recalling \eqref{Gestimate}, we have:
\begin{align*}
\int_{(\hnn \setminus B_1(x_0)) \cap B_1^c(o)}  |w_a(x)| & \,  \GH(x,x_0)\,\dgh(x)
\\
& \leq C \left[  \int_{1}^{r(x_0,o)}\frac{1}{(r(x_0,o)-r)^{a}} \,  \frac{1}{r^{1-s}} \, \mathrm{d} r  +  \int_{r(x_0,o)}^{+\infty} \frac{1}{(r-r(x_0,o))^{a}} \,  \frac{1}{r^{1-s}} \, \mathrm{d} r  \right]\\
& \leq  \frac{C}{(r(x_0,o))^{a-s}}  \left[  \int_{0}^{1}\frac{1}{(1-\rho)^{a}} \,  \frac{1}{\rho^{1-s}} \,  \mathrm{d} \rho +  \int_{1}^{+\infty}  \, \frac{1}{(\rho-1)^{a}} \, \frac{1}{\rho^{1-s}} \, \mathrm{d} \rho \right]\leq C \, ,
\end{align*}
where the latter positive constant only depends on $N,s,a$. Similarly, if $r(x_0,o) < 1 $, we get
$$
\int_{(\hnn \setminus B_1(x_0)) \cap B_1^c(o)}  |w_a(x)| \, \GH(x,x_0) \, \dgh(x) \leq  C \, \int_{1}^{+\infty} \frac{1}{(r-1)^{a}} \,  \frac{1}{r^{1-s}} \, \mathrm{d}r \le C \, ,
$$
for another $ C=C(N,s,a)>0 $ as above. Therefore, in both cases \eqref{wa-k} follows.
\end{proof}

\subsection{The class $L^1_{\GM}(\hn)$ is strictly included between $ L^1(M) $ and $L^1_{x_0, \GM }(\hn)$}\label{examples-bis}

  As we have observed in Subsection \ref{def-wds}, under Assumption \ref{general} the inclusions $ L^1(M) \subseteq L^1_{\GM}(\hn) \subseteq L^1_{x_0,\GM }(\hn) $ trivially hold. By means of an explicit construction, we can prove that in fact they are strict.
\begin{prop} 
Let $M$ satisfy Assumption \ref{general}. Then, for all $x_0\in \hn$, we have
$$ L^1(\hn) \subsetneq L^1_{\GM}(\hn) \subsetneq L^1_{x_0,\GM}(\hn)\,.$$
\end{prop}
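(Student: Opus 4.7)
The plan is to exhibit, for each of the two strict inclusions, an explicit counterexample exploiting Assumption \ref{general}. I will rely on three uniform facts provided by that assumption: (i) by Bishop--Gromov applied to \eqref{ric}, $\mu_M(B_1(y))\le \kappa_1$ for all $y\in M$; (ii) by the non-collapse estimate \eqref{non-coll}, $\mu_M(B_1(y))\ge \kappa_2$ for all $y\in M$; and (iii) by completeness and non-compactness of $M$, for any $z\in M$ and any $R>0$ there exists $y\in M$ with $r(y,z)=R$. I will also use repeatedly the Euclidean-type upper bound \eqref{green-euc} for $\GM$.

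For the first inclusion $L^1(M)\subsetneq L^1_\GM(M)$, I would fix $x_*\in M$ and pick $y_n\in M$ with $r(y_n,x_*)=2^n$ for $n\ge 3$. By the triangle inequality $r(y_n,y_m)\ge |2^n-2^m|\ge 4$, so the balls $B_1(y_n)$ are pairwise disjoint, and I define $f := \sum_{n\ge 3}\chi_{B_1(y_n)}$. From (ii), $\int_M f\,\dg=\sum_n \mu_M(B_1(y_n))=+\infty$, hence $f\not\in L^1(M)$. To check $f\in L^1_\GM(M)$, fix any $x_0\in M$: the local part $\int_{B_1(x_0)}f\,\dg$ is bounded by $\mu_M(B_1(x_0))\le \kappa_1$, and the tail part splits according to the distance $r(y_n,x_0)$. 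Since the $y_n$'s are $4$-separated, at most one of them lies within distance $2$ of $x_0$, and this ball contributes $O(1)$ to the tail because $\GM(\cdot,x_0)$ is bounded on $M\setminus B_1(x_0)$ by \eqref{green-euc}. For the remaining $y_n$, \eqref{green-euc} together with the elementary estimate $r(x,x_0)\ge r(y_n,x_0)/2$ for $x\in B_1(y_n)$ gives $\int_{B_1(y_n)}\GM(\cdot,x_0)\,\dg\le C/r(y_n,x_0)^{N-2s}$, using (i). The triangle inequality $r(y_n,x_0)\ge |2^n-r(x_*,x_0)|$ then reduces the problem to verifying
\[
\sup_{d\ge 0}\sum_{n\ge 3:\,|2^n-d|\ge 2}\frac{1}{|2^n-d|^{N-2s}}<+\infty\,,
\]
which follows by a straightforward scale split of the summation into $n$'s well below, comparable to, and well above $\log_2 d$, summing geometrically in each regime. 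This yields $\|f\|_{L^1_\GM}<\infty$.

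For the second inclusion $L^1_\GM(M)\subsetneq L^1_{x_0,\GM}(M)$, given $x_0\in M$ I pick $y_n\in M$ with $r(y_n,x_0)=2^n$ for $n\ge 3$ (so again $r(y_n,y_m)\ge 4$) and define $f := \sum_{n\ge 3}a_n\chi_{B_1(y_n)}$ with $a_n := 2^{n(N-2s)}/n^2$. Since $B_1(y_n)\cap B_1(x_0)=\emptyset$ for all $n\ge 3$, the local part of $\|f\|_{L^1_{x_0,\GM}}$ vanishes, and the tail part, again by (i) and \eqref{green-euc}, is bounded by $C\sum_n a_n/2^{n(N-2s)}=C\sum_n 1/n^2<+\infty$, so $f\in L^1_{x_0,\GM}(M)$. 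On the other hand, choosing $x_1=y_k$ as the reference point, the local part alone satisfies $\int_{B_1(y_k)}f\,\dg=a_k\,\mu_M(B_1(y_k))\ge \kappa_2\,a_k\to+\infty$ by (ii), so $\sup_{x_1\in M}\|f\|_{L^1_{x_1,\GM}}=+\infty$ and $f\not\in L^1_\GM(M)$.

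The only step that is not a direct application of standard tools is the uniform boundedness in $d$ of the lattice-like series appearing in the first inclusion, and this is where the exponential spacing of the base points $y_n$ is genuinely essential; without such spacing, possible clustering of the $y_n$'s near an arbitrary $x_0$ would not be controlled by the available volume comparison, which is only exponential (not polynomial) at large scales.
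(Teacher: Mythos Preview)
Your proof is correct and follows essentially the same approach as the paper's: both use a sum of indicator functions of unit balls centered at an exponentially separated sequence of points (the paper uses $r(o_j,o)=e^{j}$, you use $r(y_n,x_*)=2^n$), showing non-integrability via \eqref{non-coll} and uniform control of the weighted tails via \eqref{green-euc}, \eqref{uppervolume}, and the same triangle-inequality splitting into three scale regimes. For the second strict inclusion the paper takes linearly growing coefficients $j$ (and then shows the resulting function lies in $L^1_{x_0,\GM}$ for \emph{every} $x_0$), whereas you take $a_n=2^{n(N-2s)}/n^2$ tailored to a fixed $x_0$; both variants work, yours making the tail a $\sum 1/n^2$ series directly.
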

\begin{proof}
We first show that $ L^1(\hn) \subsetneq L^1_{\GM}(\hn) $. To this aim, consider the following function:
\begin{equation}\label{ee-ff}
\hat{u} := \sum_{j=1}^\infty \chi_{B_1(o_j)} \, ,
\end{equation}
where $ \{ o_j \} \subset \hn $ is any sequence of points such that (let $ o \in \hn $ be fixed)
$$
r(o_j,o) = e^{j} \qquad \forall j \in \mathbb{N} \, .
$$
In view of the non-collapse bound \eqref{non-coll} it is apparent that $ \hat{u} \not\in L^1(\hn) $, since (note that the balls $ \{ B_1(o_j) \} $ are disjoint)
$$
\left\| \hat{u} \right\|_{L^1(\hn)} =  \sum_{j=1}^\infty \mu_\hn(B_1(o_j)) \ge \sum_{j=1}^\infty C = + \infty \, .
$$
Here, and in the sequel, we let $ C>0 $ denote a generic constant that only depends on $ N , k, s $ and the constant $c$ appearing in \eqref{FK}, whose precise value may actually change from line to line. In order to prove that $ \hat{u} \in L^1_{\GM}(\hn) $, first of all let us observe that for every $ j \in \mathbb{N} $ and $ x_0 \in \hn $ it holds
\begin{equation}\label{example-0}
\begin{aligned}
\int_{\hn \setminus B_1(x_0)}  \chi_{B_1(o_j)}(x) \,  \GM(x,x_0) \, \dg(x) = & \, \int_{B_1(o_j) \cap (\hn \setminus B_1(x_0))}  \GM(x,x_0) \, \dg(x) \\
 \le & \, C \int_{B_1(o_j) \cap (\hn \setminus B_1(x_0))}  \frac{1}{r(x,x_0)^{N-2s}} \, \dg(x) \\
 \le & \, C \int_{B_1(o_j) \cap (\hn \setminus B_1(x_0))}  \frac{1}{\left[(r(o_j,x_0)-1)\vee 1\right]^{N-2s}} \, \dg(x)  \\
 \le & \, C \, \frac{\mu_\hn(B_1(o_j))}{\left[ r(o_j,x_0) \vee 1 \right]^{N-2s}} \le \frac{C}{\left[ r(o_j,x_0) \vee 1 \right]^{N-2s}} \, ,
\end{aligned}
\end{equation}
where we took advantage of \eqref{green-euc} along with the triangle inequality and the fact that $ \mu_\hn(B_1(o_j)) \le C $, consequence of \eqref{uppervolume}.  As a result, we deduce that
\begin{equation}\label{example-1}
\begin{aligned}
\int_{\hn \setminus B_1(x_0)}  \left| \hat{u}(x) \right|  \GM(x,x_0) \, \dg(x) = & \, \sum_{j=1}^\infty \int_{\hn \setminus B_1(x_0)}  \chi_{B_1(o_j)}(x) \,  \GM(x,x_0) \, \dg(x) \\
\le & \, C \, \sum_{j=1}^\infty \frac{1}{\left[ r(o_j,x_0) \vee 1 \right]^{N-2s}} \, .
\end{aligned}
\end{equation}
Clearly, either $ x_0 \in B_{e^2}(o) $ or there exists a unique $ j_0 \in \mathbb{N} $, with $ j_0 \ge 2 $, such that $ x_0 \in \overline{B}_{e^{j_0+1}}(o) \setminus B_{e^{j_0}}(o) $. In the former case, still by means of the triangle inequality, we have
\begin{equation}\label{example-2}
\sum_{j=1}^\infty \frac{1}{\left[ r(o_j,x_0) \vee 1 \right]^{N-2s}} \le \sum_{j=1}^\infty \frac{1}{\left[ \left( r(o_j,o)-e^2 \right) \vee 1  \right]^{N-2s}} \le C \, \sum_{j=1}^\infty \frac{1}{r(o_j,o)^{N-2s}} =  C \, \sum_{j=1}^\infty \frac{1}{e^{(N-2s)j}} =C \, .
\end{equation}
 The latter case requires a little more computations. Firstly, we can split the last series in \eqref{example-1} as follows:
\begin{equation}\label{example-3}
\sum_{j=1}^\infty \frac{1}{\left[ r(o_j,x_0) \vee 1 \right]^{N-2s}} = \sum_{j=1}^{j_0} \frac{1}{\left[ r(o_j,x_0) \vee 1 \right]^{N-2s}}  + \sum_{j=j_0+1}^\infty \frac{1}{\left[ r(o_j,x_0) \vee 1 \right]^{N-2s}} \, .
\end{equation}
On the one hand, from the triangle inequality we infer that
$$
r(o_j,x_0) \ge r(o_j,o) - r(o,x_0) \ge  e^{j} - e^{j_0+1} = e^{j}\left(1-e^{j_0+1-j} \right) \ge e^{j} \, \frac{e-1}{e} \qquad \forall j \ge j_0+2 \, ,
$$
 whence
\begin{equation}\label{example-4}
 \sum_{j=j_0+1}^\infty \frac{1}{\left[ r(o_j,x_0) \vee 1 \right]^{N-2s}} \le 1 + \sum_{j=j_0+2}^\infty \frac{1}{ r(o_j,x_0)^{N-2s}} \le 1 + C \sum_{j=j_0+2}^\infty \frac{1}{ e^{(N-2s)j}} \le  C \, .
 \end{equation}
On the other hand, still the triangle inequality entails
$$
\begin{gathered}
r(o_j,x_0) \ge r(o,x_0)-r(o_j,o) \ge e^{j_0} - e^{j} = e^{j_0-j} \, e^{j} \left( 1 - e^{-j_0+j} \right) \ge e^{j_0-j} \, (e-1) \\ \forall j=1,\ldots,j_0-1 \, ,
\end{gathered}
$$
so that
\begin{equation}\label{example-5}
\sum_{j=1}^{j_0} \frac{1}{\left[ r(o_j,x_0) \vee 1 \right]^{N-2s}}  \le 1 + \sum_{j=1}^{j_0-1} \frac{1}{r(o_j,x_0)^{N-2s}} \le 1 + C \, \sum_{j=1}^{j_0-1} \frac{1}{e^{(N-2s)(j_0-j)}} \le C \, .
\end{equation}
By combining \eqref{example-1}, \eqref{example-2}, \eqref{example-3}, \eqref{example-4} and \eqref{example-5}, we finally obtain the bound
$$
\int_{\hn \setminus B_1(x_0)} \left|\hat{u}(x) \right| \GM(x,x_0) \, \dg(x) \le  C \qquad \forall x_0\in \hn \, ,
$$
while $ \int_{B_1(x_0)} \left| \hat{u} \right| \dg \le C $ since $\| \hat{u} \|_{L^\infty(\hn)} \le 1  $ and again $ \mu_\hn(B_1(x_0)) \le C $. We can therefore assert that $ \hat{u} \in L^1_{\GM}(\hn) $.

In order to show that also $ L^1_{\GM}(\hn) \subsetneq L^1_{x_0,\GM}(\hn) $, we can slightly modify the above construction as follows:
$$
\tilde{u} := \sum_{j=1}^\infty j \,  \chi_{B_1(o_j)} \, .
$$
Such a function does not belong to $ L^1_{\GM}(\hn) $, since
$$
\begin{aligned}
\left\| \tilde{u} \right\|_{L^1_{\GM}} \ge \sup_{x_0 \in M}  \int_{B_1(x_0)} \sum_{j=1}^\infty j \,  \chi_{B_1(o_j)} \, \dg = & \, \sup_{x_0 \in M} \sum_{j=1}^\infty j \, \mu_\hn(B_1(x_0) \cap B_1(o_j))
 \\
 \ge & \, \sup_{i \ge 1} \sum_{j=1}^\infty j \, \mu_\hn(B_1(o_i) \cap B_1(o_j)) \\
 = & \, \sup_{i \ge 1} i \, \mu_\hn(B_1(o_i)) = +\infty \, ,
\end{aligned}
$$
recalling again the non-collapse bound \eqref{non-coll}. On the other hand, for an arbitrary (but fixed) $x_0 \in \hn$, we have:
$$
\begin{aligned}
\left\| \tilde{u} \right\|_{L^1_{x_0,\GM}} = & \, \sum_{j=1}^\infty \int_{B_1(x_0)} j \,  \chi_{B_1(o_j)} \,\dg   + \sum_{j=1}^\infty \int_{\hn \setminus B_1(x_0)}  j \,  \chi_{B_1(o_j)}(x) \, \GM(x,x_0)\,\dg(x)   \\
\le & \, \sum_{j=1}^\infty j \, \mu_\hn(B_1(x_0) \cap B_1(o_j)) +  \sum_{j=1}^\infty j \, \frac{C}{\left[ r(o_j,x_0) \vee 1 \right]^{N-2s}} \\
\le & \, C \, \log(r(x_0,o)+1) +  \sum_{j=1}^\infty j \, \frac{C}{\left[ (r(o_j,o)-r(x_0,o)) \vee 1 \right]^{N-2s}} \\
\le & \, C \, \log(r(x_0,o)+1)  + C_{x_0}  \sum_{j=1}^\infty \frac{j}{e^{(N-2s)j}} < +\infty \, ,
\end{aligned}
$$
 where we have exploited \eqref{example-0} and the fact that $ \mu_\hn(B_1(o_j)) \le C $. Note that here we let $ C_{x_0}>0 $ denote a generic constant as above, which in addition may depend on $x_0$. As a result, we deduce that $ \tilde{u} \in L^1_{x_0,\GM}(\hn) $ for every $ x_0 \in \hn $.
\end{proof}

\begin{rem}\label{no-monotone-app}
Given $ f \in  L^1_{\GM}(\hn)  $, with $ f \ge 0 $, as a consequence of the monotone convergence theorem it is plain that if $ \{ f_n \} \subset L^1(M) \cap L^\infty(M) $ is an increasing sequence of nonnegative functions that converges pointwise to $ f $, then
$$
\left\| f-f_n \right\|_{L^1_{x_0,\GM}} \underset{n \to \infty}{\longrightarrow} 0 \qquad \forall x_0 \in \hn \, .
$$
However, in general, convergence w.r.t.~the norm $ \| \cdot \|_{L^1_{\GM}} $ does not occur. Indeed, if $ \hat{u} $ is the same function as in \eqref{ee-ff} and we pick the monotone increasing sequence $ \hat{u}_n := \chi_{B_n(o)} \hat{u} $, we have
$$
\left\|  \hat{u} - \hat{u}_n \right\|_{L^1_{\GM}} \ge \sup_{x_0 \in \hn} \int_{B_1(x_0)} \left| \hat{u} - \hat{u}_n \right| \dg \ge \sup_{j \ge n+1} \int_{B_1(o_j)} \left| 1 - 0 \right| \dg \ge C > 0 \qquad \forall n \in \mathbb{N} \, .
$$
\end{rem}

\section{Existence and fundamental estimates for approximate solutions} \label{smooth}

In this section we prove existence of WDS to \eqref{NFDE} for initial data lying in the space $ L^1(\hn) \cap L^\infty(\hn) $, along with several additional key properties that will be crucial in the construction of WDS for general initial data as in the statement of Theorem \ref{thm-existence}.

The strategy is qualitatively standard and takes advantage of nonlinear semigroup theory. On the other hand, the fact that semigroup solutions are also Weak Dual Solutions is not obvious and will be proved explicitly.

\subsection{Nonlinear semigroup in ${L^1(\hn)}$: mild vs.~Weak Dual Solutions} \label{NS-abstract}
The abstract theory in $L^1(\hn)$ developed by B\'enilan, Br\'ezis, Crandall, Liggett, Pazy, Pierre (see \cite{BCr,CR,CP,V} and references therein) does apply to our setting. Indeed, we aim at solving an equation of the form $\partial_t u=-\mathcal{L}[\varphi(u)]$, where the operator $\mathcal{L}: {\rm Dom}(\mathcal{L})\subset L^1(\hn)\to L^1(\hn)$, namely the $s$-fractional Laplacian on $M$, is densely defined, linear, m-accretive and with an order-preserving resolvent, whereas the nonlinearity $\varphi(r)=r^m$ satisfies standard ``monotonicity conditions'' according to \cite{CP}.

 In fact, this general theory allows one to establish existence and uniqueness for a larger class of nonlinearities and operators, but to our purposes it is enough to stick to the choices $\mathcal{L}=(-\Delta_{\hn})^s$ and $\varphi(r)=r^m$. More precisely, such a linear operator and nonlinearity satisfy the assumptions of {\cite[Propositions 1 and 2]{CP}, which in particular imply that the corresponding nonlinear operator obtained by composition is also m-accretive (up to an approximation) and its resolvent is order preserving, a property sometimes called $T$-accretivity (see e.g.~\cite[Chapter 10]{V}). We shall justify these assertions in the proof of the next proposition.}

The semigroup solutions obtained by resorting to the above recalled theory, are typically referred to as \it mild solutions\rm, and can be constructed through a suitable \emph{discretization procedure} which gives rise to $L^1$-continuous curves. We summarize these results in the following proposition, and for simplicity (since it is  {more convenient}   to our aims) we only focus on nonnegative initial data in $ L^1(M) \cap L^\infty(M) $, although the theory is well suited to treat  {also}   $  L^1(M)$ data, possibly sign changing.

\begin{prop}\label{BCPP-Thm}
	Let $u_0,v_0\in L^1(\hn) \cap L^\infty(\hn) $, with $ u_0,v_0 \ge 0 $. Then there exist unique nonnegative mild solutions $u,v\in C^0([0,+\infty); L^1(\hn))$ to problem \eqref{NFDE} corresponding to the initial data $u_0,v_0$, respectively, such that
	\begin{equation}\label{comparison.L1}
	\int_{\hn} \left( u(t,x)-v(t,x)\right)_+ \dg(x) \le \int_{\hn} \left( u_0(x)-v_0(x)\right)_+ \dg(x) \qquad \forall t\ge 0\,.
	\end{equation}
	As a consequence,
	\begin{equation*}
	\left\| u(t)-v(t) \right\|_{L^1(\hn)}\le \left\| u_0-v_0 \right\|_{L^1(\hn)} \qquad \forall t\ge 0 \, .
	\end{equation*}
	Moreover, such nonnegative mild solutions enjoy the following time monotonicity property:
	\begin{equation}\label{mon-est.OLD}
	\mbox{the map} \quad t \mapsto t^{\frac{1}{m-1}} u(t,x) \quad \mbox{is (essentially) nondecreasing for a.e.~$x \in \hn \, .$}
	\end{equation}
	Finally, they satisfy the following $ L^p(M) $-nonexpansivity property:
	\begin{equation}\label{eq-wm-1}
	\left\| u(t) \right\|_{L^p(\hn)} \leq \left\| u_0 \right\|_{L^p(\hn)} \qquad \mbox{for all $t\ge 0$ and all $1 \le p \leq \infty \, .$}
	\end{equation}
\end{prop}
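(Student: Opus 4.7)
The plan is to apply the abstract nonlinear semigroup theory of Benilan--Crandall--Pazy--Pierre directly to the pair $(\mathcal{L},\varphi)=((-\Delta_M)^s,\,r\mapsto r^m)$, and then extract the three concrete properties (comparison/contraction, time monotonicity, $L^p$ nonexpansivity) by soft arguments on top of it.

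First I would verify the hypotheses of Theorems 3 and 4 of \cite{CP}. The operator $\mathcal{L}=(-\Delta_M)^s$, defined via the spectral theorem, is densely defined, linear and m-accretive in $L^1(\hn)$: this is inherited from the analogous properties of $-\Delta_M$ through the Bochner subordination formula, which also guarantees that its resolvent is sub-Markovian. The nonlinearity $\varphi(r)=r^m$ is continuous, strictly increasing, with $\varphi(0)=0$, and thus meets the abstract monotonicity requirements. Those theorems then produce, for every $u_0 \in L^1(\hn)$, a unique mild solution $u \in C^0([0,+\infty); L^1(\hn))$ of \eqref{NFDE} together with the $T$-contractivity \eqref{comparison.L1}. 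Nonnegativity of $u$ comes from taking $v\equiv 0$ (trivially a mild solution) and applying \eqref{comparison.L1} with roles swapped; the $L^1$ nonexpansivity follows from \eqref{comparison.L1} via $|a-b|=(a-b)_+ + (b-a)_+$. A pointwise comparison (if $u_0\le v_0$ a.e.\ then $u(t)\le v(t)$ a.e.) also drops out immediately, since $(u_0-v_0)_+\equiv 0$ forces $(u(t)-v(t))_+\equiv 0$ through \eqref{comparison.L1}.

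Next I would obtain the monotonicity \eqref{mon-est.OLD} by exploiting the self-similar scaling of the FPME. The equation is invariant under $u(t,x)\mapsto u_\lambda(t,x) := \lambda\, u(\lambda^{m-1} t, x)$ for any $\lambda>0$, and $u_\lambda$ is the mild solution issuing from $\lambda u_0$. For $\lambda\ge 1$ we have $\lambda u_0 \ge u_0$ a.e., so the comparison principle just established yields $\lambda\, u(\lambda^{m-1} t,x) \ge u(t,x)$ for a.e.\ $x\in\hn$ and every $t>0$. Setting $\tau = \lambda^{m-1} t$, equivalently $\lambda = (\tau/t)^{1/(m-1)}$, rewrites this as $\tau^{1/(m-1)} u(\tau,x) \ge t^{1/(m-1)} u(t,x)$ for all $\tau\ge t>0$ and a.e.\ $x$, which is exactly \eqref{mon-est.OLD}.

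Finally, for \eqref{eq-wm-1}, the endpoints are easy: $p=1$ was already established, while $p=\infty$ follows because constants lie in the kernel of the spectral $(-\Delta_M)^s$, so $v(t)\equiv \|u_0\|_\infty$ is a stationary supersolution and comparison gives $u(t)\le \|u_0\|_\infty$ a.e. For the intermediate range $1<p<\infty$, the cleanest route is at the implicit Euler level used to build the mild solution: testing the resolvent identity $u_{n+1}+\tau (-\Delta_M)^s u_{n+1}^m = u_n$ against $u_{n+1}^{p-1}$, the Stroock--Varopoulos inequality for the sub-Markovian fractional semigroup on $\hn$ gives $\int_\hn u_{n+1}^{p-1}\,(-\Delta_M)^s u_{n+1}^m \,\dg\ge 0$, and H\"older's inequality then yields $\|u_{n+1}\|_p\le \|u_n\|_p$. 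Passing to the limit as the time step tends to $0$ produces \eqref{eq-wm-1}. The main obstacles I anticipate are not conceptual but technical: rigorously justifying that constants act as (super)solutions of the spectral fractional Laplacian on the noncompact manifold $\hn$ (which, if one wishes to avoid that subtlety, can be bypassed by invoking the $L^\infty$-contractivity of the resolvent, a standard consequence of sub-Markovianity), and fixing the precise form of Stroock--Varopoulos that applies in the present geometric setting.
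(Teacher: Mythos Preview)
Your proposal is correct and follows essentially the same route as the paper: both invoke the Crandall--Pierre theory \cite{CP} for existence, uniqueness and $T$-contraction, deduce nonnegativity and comparison from \eqref{comparison.L1}, and establish the $L^p$-nonexpansivity at the implicit Euler level before passing to the limit. The only cosmetic differences are that the paper simply cites Theorems 3--4 of \cite{CP} for the time monotonicity \eqref{mon-est.OLD} (whose proof is precisely the scaling argument you wrote out), and for \eqref{eq-wm-1} the paper appeals to \cite[Proposition~1,~v)]{CP} as a black box rather than invoking Stroock--Varopoulos directly; your own caveat about bypassing the constant-supersolution argument via sub-Markovianity of the resolvent is exactly the right instinct, since constants are not in $L^1(M)$.
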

\begin{proof}
First of all, we observe that the operator $(-\Delta_M)^s$ is the generator of a strongly continuous contraction semigroup in $ L^1(M) $, namely the semigroup subordinated to the heat flow on $M$ via the Bernstein function $ \R^+ \ni z \mapsto z^s $ (see \cite[Definition 4.3.2 and Example 3.9.16]{J}). 
Since the latter is well known to be sub-Markovian and in particular order preserving, the same is true for the subordinated semigroup, see for instance \cite[Corollary 4.3.4]{J} (the fact that these results are stated in $ \mathbb{R}^N $ is irrelevant). Hence, we can deduce that $ (-\Delta_M)^s $ is m-accretive in $ L^1(M) $ and the 
inequality
\begin{equation}\label{s-v}
\int_M \beta(u) \,  (-\Delta_M)^s u \,  \dg \ge 0
\end{equation}
holds, essentially whenever the integral makes sense (see \cite[formula (A3)]{CP}), where $ \beta : \mathbb{R} \to \mathbb{R} $ is an arbitrary non-decreasing continuous function with $ \beta(0) = 0 $.

The claimed results can now be deduced from \cite{CP} and standard nonlinear semigroup theory for accretive operators (for which we refer to the monograph \cite[Chapter 10]{V} and references quoted), since $ (-\Delta_M)^s $ and the nonlinearity $u \mapsto u^m$ (to be interpreted as $ u|u|^{m-1} $ for sign-changing functions) comply with the conditions required there.

We now show how \eqref{eq-wm-1} can be obtained for the discrete-time solutions and then extended to the limit (mild) solution,   also in order to write down explicitly the  corresponding time-discretization argument (i.e.~an implicit Euler scheme), of which we will make use below. To this end, let $T>0$ and $ n \in \mathbb{N} $  be fixed and set
	$t_k := \frac{k}{n} \, T$, for any integer $ 0 \leq k \leq n$,
	with constant time step $h := t_{k+1} - t_{k} = \frac{T}{n}.$ The mild semigroup solution, starting from $ u_0 $, is a continuous curve $ u \in C^0([0,T];L^1(M)) $ obtained as   the uniform limit in $ L^1(M) $ (see \cite[Theorem 10.16]{V})   of the piecewise-constant curves
	$$
	u_n(t) := u_k \quad \text{if }  t_k < t \le t_{k+1} \, , \qquad u_n(0):=u_0 \, ,
	$$
	where each element $ u_k $ is recursively defined as the solution to the following fractional elliptic equation:
	\begin{equation}\label{eq-approximate}
	h \, (-\Delta_{\hn})^s \!\left(u_{k+1}\right)^m    +  u_{k+1} = u_{k}  \qquad \mbox{in} \ \hn \, ,
	\end{equation}
	whose solvability is guaranteed by the above recalled running assumptions on the operator and on the nonlinearity.   More precisely, thanks to \cite[Proposition 2]{CP}, the solution of \eqref{eq-approximate} is obtained as the $ L^1(M) $-limit as $ \varepsilon \to 0^+ $ of the solutions to the perturbed problems
		\begin{equation}\label{eq-approximate-bis}
	h \, (-\Delta_{\hn})^s \!\left(u_{k+1,\varepsilon}\right)^m + \varepsilon \, h \,   u_{k+1,\varepsilon}^{m} +  u_{k+1,\varepsilon} = u_{k}  \qquad \mbox{in} \ \hn \, ,
		\end{equation}
	which are always well defined because the nonlinear operator $ u \mapsto h \, (-\Delta_{\hn})^s \!\left(u^m\right)+ \varepsilon \, h \,   u^{m} $ is m-accretive for every fixed $ \varepsilon>0 $ (see \cite[Proposition 1]{CP}). If $ u_k \in L^1(\hn) \cap L^\infty(\hn) $, then from \eqref{s-v} we infer the $L^p(M) $-nonexpansivity property
	\begin{equation}\label{ke}
		\left\| u_{k+1,\varepsilon} \right\|_{L^p(\hn)} \leq \left\| u_k \right\|_{L^p(\hn)}  \qquad \text{for all } 1 \le p \le \infty \, .
	\end{equation}
	In order to show \eqref{ke}, it is enough to multiply \eqref{eq-approximate-bis} by $ u_{k+1,\varepsilon}^{p-1} $ (via a truncation), integrate in $ \hn $ and exploit \eqref{s-v}, along with an application of H\"older's inequality. Therefore, the convergence of $ \{ u_{k+1,\varepsilon} \}_\varepsilon $ to $ u_{k+1} $ actually occurs in every $ L^p(\hn) $, for $ 1 \le p < \infty $. In particular, we can infer that \eqref{eq-approximate} is satisfied in the classical sense, i.e.~$ u_{k+1}^m $ belongs to the $ L^2(\hn) $ domain of the self-adjoint operator $ (-\Delta_{\hn})^s $. Moreover, still from \eqref{ke}, by induction each $u_k$ does belong to $L^{1}(\hn) \cap L^{\infty}(\hn)$ and satisfies the nonexpansivity estimate

	$$
	\left\| u_{k+1} \right\|_{L^p(\hn)} \leq \left\| u_k \right\|_{L^p(\hn)} \leq \ldots \leq \left\| u_0 \right\|_{L^p(\hn)}
	$$
	for all $ 1 \le p \le \infty $, whence, due to the definition of $ u_n $,
	$$
	\left\| u_n(t) \right\|_{L^p(\hn)} \le \left\| u_0 \right\|_{L^p(\hn)}  \qquad \forall t \in [0,T] \, .
	$$
	We conclude the proof by letting $n \rightarrow \infty$, recalling that $ \{ u_n(t) \}_n $ converges strongly to $ u(t) $ in $ L^1(M) $ (uniformly in time on $ [0,T] $) and each $ L^p(M) $ norm is lower semicontinuous w.r.t.~such convergence, so as to obtain
	$$
 \left\| u(t) \right\|_{L^p(\hn)}   \le  \liminf_{n\to\infty} \left\| u_n(t) \right\|_{L^p(\hn)} \le \left\| u_0 \right\|_{L^p(\hn)}  \qquad \forall t \in [0,T] \, ,
 	$$
namely \eqref{eq-wm-1}.   The $ L^1(\hn) $-ordering estimate \eqref{comparison.L1} can be obtained in a similar way; in this case, one considers the discrete solutions to
$$
	h \, (-\Delta_{\hn})^s \!\left(v_{k+1}\right)^m    +  v_{k+1} = v_{k}  \qquad \mbox{in} \ \hn \, ,
$$
takes the difference with \eqref{eq-approximate} and multiplies by $ \operatorname{sign}_+(u_{k+1}-v_{k+1}) $ (via an approximation), integrating in $ \hn $ and using again \eqref{s-v}. This yields \eqref{comparison.L1} at the discrete level, and the passage to the limit as $ n \to \infty $ is performed exactly as above. Note that \eqref{comparison.L1} entails, in particular, the positivity of the solution $ u $ if $ u_0 \ge 0 $.

Finally, the monotonicity property \eqref{mon-est.OLD} is a direct consequence of \cite[Theorem 4]{CP}.
\end{proof}

Now that we have constructed mild solutions in $L^1(\hn) \cap L^\infty(M) $, we aim at showing that the latter are also \emph{Weak Dual Solutions} according to Definition \ref{defi_WDS}. We devote the rest of this subsection to proving such crucial fact, borrowing ideas from \cite[Section 7]{BV1}.

\begin{lem}\label{lemma-inv}
Let $ \hn $ satisfy Assumption \ref{general}, and let $ p \in \left( 1 , \frac{N}{2s} \right) $. Then the operator $ (-\Delta_\hn)^{-s} $ is continuous from $ L^p(\hn) $ to $ L^q(\hn) $ for $ q = \frac{pN}{N-2sp} $. Moreover, if $ u  , \left( -\Delta_\hn \right)^s u\in L^p(\hn) $, the following left-inverse formula holds:
\begin{equation}\label{left-infv}
\left( -\Delta_\hn \right)^{-s}\left[ \left( -\Delta_\hn \right)^{s} u \right] = u \, .
\end{equation}
\end{lem}
\begin{proof}
As concerns the $ L^p $-$L^q$ continuity property of the (linear) operator $ (-\Delta_\hn)^{-s} $, it is a direct consequence of \eqref{FK} via the heat-kernel bound \eqref{gaussian}, thanks to the results of \cite{Var} (see also \cite[Introduction]{CM} for a nice survey). In particular, if we let $ \{ T_t \}_{t \ge 0} $ denote the subordinated semigroup generated by $ \left(-\Delta_\hn \right)^s $, still as a consequence of \eqref{gaussian} it is not difficult to infer that
$$
\lim_{t \to +\infty} \left\| T_t u \right\|_{L^p(\hn)} = 0 \, ,
$$
whereas, in view of the assumptions on $ u $ and the definition of $ \{ T_t \}_{t \ge 0} $, it follows that
$$
\lim_{h \to 0^+} \frac{u - T_h u}{h} = \left( -\Delta_\hn \right)^s u \qquad \text{in } L^p(\hn) \, .
$$
Recalling that the operator $ (-\Delta_\hn)^{-s} $ introduced in \eqref{fract-bis} can equivalently be rewritten as
$$
(-\Delta_\hn)^{-s} = \int_0^{+\infty} T_t \, dt
$$
(by means of Fubini's theorem and subordination), in order to achieve \eqref{left-infv} one can reproduce verbatim the proof of
\cite[Proposition 6.2.12]{J-3}, taking advantage of the above properties.
\end{proof}

\begin{prop}\label{thm-weak-mild}
Let $ \hn $ satisfy Assumption \ref{general}. Let $u$ be the mild solution to \eqref{NFDE} corresponding to any nonnnegative initial datum $u_0 \in L^1(\hn) \cap L^\infty(M) $. Then, $u$ is a WDS in the sense of Definition~\ref{defi_WDS}.
\end{prop}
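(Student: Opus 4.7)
The plan is to exploit the time-discretization (implicit Euler scheme) already used to construct the mild solution in Proposition \ref{BCPP-Thm} and to derive the dual identity \eqref{def_eq} as the passage to the limit of a suitable discrete analogue. Pick $T>0$, divide $[0,T]$ into $n$ uniform intervals with mesh $h=T/n$, and recall that the approximants $\{u_k\}_{k=0}^{n}$ belong to $L^1(M)\cap L^\infty(M)$ and satisfy
\eqref{eq-approximate}. Given a test function $\psi\in C^1_c((0,T);L^\infty_c(\hn))$ with space support contained in some $B_\sigma(x_0)$, set $\psi_k:=\psi(t_k,\cdot)$ and pair \eqref{eq-approximate} (at level $k+1$) with $(-\Delta_\hn)^{-s}\psi_{k+1}$. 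Using the spectral duality
\[
\int_\hn \bigl((-\Delta_\hn)^s f\bigr)\,(-\Delta_\hn)^{-s}\varphi\,\dg \;=\;\int_\hn f\,\varphi\,\dg,
\]
which is legitimate because $(u_{k+1})^m\in L^1(M)\cap L^\infty(M)$ and $(-\Delta_\hn)^{-s}\varphi$ is bounded, with pointwise decay controlled by $\GM(\cdot,x_0)$ thanks to Lemma \ref{lem.Phi.estimates} and Remark \ref{generalR}, the elliptic equation \eqref{eq-approximate} transforms into the ``discrete WDS identity''
\[
\int_\hn (u_{k+1}-u_k)\,(-\Delta_\hn)^{-s}\psi_{k+1}\,\dg \;=\; -\,h\int_\hn (u_{k+1})^m\,\psi_{k+1}\,\dg.
\]

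Next I sum this identity for $k=0,\dots,n-1$ and apply Abel summation by parts on the left. Since $\psi$ is compactly supported in $(0,T)$, for $n$ large enough one has $\psi_0=\psi_1=\psi_{n-1}=\psi_n=0$, so all boundary terms vanish and one is left with
\[
-\sum_{k=1}^{n-1} h\int_\hn u_k\,(-\Delta_\hn)^{-s}\!\left(\tfrac{\psi_{k+1}-\psi_k}{h}\right)\dg \;=\; -\sum_{k=0}^{n-1} h\int_\hn (u_{k+1})^m\,\psi_{k+1}\,\dg.
\]
By Fubini (whose applicability is ensured by $u_k\in L^1\cap L^\infty$, $\psi\in L^\infty_c$ and the local integrability of $\GM$ guaranteed by \eqref{green-euc}), the left-hand side equals
\[
-\sum_{k=1}^{n-1} h\int_\hn (-\Delta_\hn)^{-s}u_k\,\cdot\,\tfrac{\psi_{k+1}-\psi_k}{h}\,\dg.
\]

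The remaining task is to pass to the limit as $n\to\infty$. The piecewise-constant interpolants $u_n$ converge strongly to $u$ in $C^0([0,T];L^1(M))$ and are uniformly bounded in $L^\infty$ by \eqref{eq-wm-1}, so the right-hand side converges to $-\int_0^T\!\!\int_\hn u^m\,\psi\,\dg\,dt$ by dominated convergence (using that $\psi$ has compact space support and is bounded). For the left-hand side, the difference quotients $h^{-1}(\psi_{k+1}-\psi_k)$ converge uniformly to $\partial_t\psi$, while the functions $(-\Delta_\hn)^{-s}u_n(t)$ remain uniformly bounded in $L^1_{\mathrm{loc}}(M)$ on the support of $\psi$ (because $u_n(t)\in L^1_{x_0,\GM}$ uniformly and Lemma \ref{lem.Phi.estimates} provides the bound $\int_{B_\sigma(x_0)}(-\Delta_\hn)^{-s}|u_n(t)|\,\dg\le C\|u_n(t)\|_{L^1_{x_0,\GM}}$ via duality). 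Combining these two limits yields exactly \eqref{def_eq}.

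It remains to check the regularity requirements of Definition \ref{defi_WDS}. The inclusion $u\in C^0([0,T];L^1_{x_0,\GM}(\hn))$ follows at once from $u\in C^0([0,T];L^1(\hn))$ together with the continuous embedding $L^1(\hn)\hookrightarrow L^1_{x_0,\GM}(\hn)$, which holds because $\GM(\cdot,x_0)\le C$ outside $B_1(x_0)$ by \eqref{green-euc}. The condition $u^m\in L^1((0,T);L^1_{\mathrm{loc}}(\hn))$ is immediate from the uniform $L^\infty$ bound \eqref{eq-wm-1}, and the initial condition $u(0,\cdot)=u_0$ is built into the mild-solution construction. The main obstacle in the whole argument is the rigorous justification of the duality and Fubini steps, for which Lemma \ref{lem.Phi.estimates} and its Remark \ref{generalR} are indispensable, since they provide the pointwise control on $(-\Delta_\hn)^{-s}\psi$ needed to manipulate the integrals at every stage of the proof.
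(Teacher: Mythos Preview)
Your approach is essentially the one the paper takes: pair the discrete elliptic steps \eqref{eq-approximate} with $(-\Delta_\hn)^{-s}\psi_k$, perform Abel summation, and pass to the limit. The duality/Fubini justifications you give via Lemma \ref{lem.Phi.estimates} and Remark \ref{generalR} are fine, and the regularity checks at the end are correct.

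There is, however, a genuine gap in your limit argument for the left-hand side. You write that the difference quotients converge uniformly to $\partial_t\psi$ and that $(-\Delta_\hn)^{-s}u_n(t)$ is \emph{uniformly bounded} in $L^1_{\mathrm{loc}}$, and then assert that ``combining these two limits yields exactly \eqref{def_eq}''. But uniform boundedness of $(-\Delta_\hn)^{-s}u_n(t)$ is not a limit; it only lets you replace $h^{-1}(\psi_{k+1}-\psi_k)$ by $\partial_t\psi(t_k,\cdot)$ up to a vanishing error. After that replacement you still need to pass from the discrete approximants $u_k$ to the limit $u$, i.e.\ to show that
\[
\sum_{k} h\int_\hn (-\Delta_\hn)^{-s}u_k\,\partial_t\psi(t_k,\cdot)\,\dg \;\longrightarrow\; \int_0^T\!\!\int_\hn (-\Delta_\hn)^{-s}u\,\partial_t\psi\,\dg\,\dt .
\]
This requires convergence of $(-\Delta_\hn)^{-s}u_k$ to $(-\Delta_\hn)^{-s}u(t_k)$ (in $L^1$ on the support of $\psi$), together with recognition of a Riemann sum of the continuous map $t\mapsto\int_\hn (-\Delta_\hn)^{-s}u(t)\,\partial_t\psi\,\dg$. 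The paper handles exactly this by introducing $\tilde u_k:=u(t_k,\cdot)$, splitting the sum as a piece involving $(-\Delta_\hn)^{-s}(u_k-\tilde u_k)$ (controlled via $\sup_t\|u_n(t)-u(t)\|_{L^1(\hn)}\to 0$ and the $L^\infty$ bound on $(-\Delta_\hn)^{-s}\partial_t\psi$) plus a genuine Riemann sum, using the continuity established in Remark \ref{remdef}. You have already recorded the key input $u_n\to u$ in $C^0([0,T];L^1(\hn))$ when treating the right-hand side; you just need to use it again here, together with the $L^1_{x_0,\GM}\to L^1_{\mathrm{loc}}$ continuity of $(-\Delta_\hn)^{-s}$, to close the argument.
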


\begin{proof}
	We mainly follow ideas behind the proof of \cite[Proposition 7.2]{BV1}, which deals with FPME on bounded Euclidean domains, thus we will just emphasize the crucial points for convenience of the reader.
	
	Let us go back to the time discretization procedure introduced in \eqref{eq-approximate} (with $T>0$ arbitrary but fixed). Thanks to   Lemma \ref{lemma-inv}, the expressions $(-\Delta_\hn)^{-s}u_k$ and $ (-\Delta_\hn)^{-s}\left[ \left(-\Delta_\hn \right)^s \! \left(u_{k+1}\right)^m \right]	 $ make sense as functions in $ L^q(\hn) $ for all $k$, as from Proposition \ref{BCPP-Thm} we know in particular that $ u_k ,  \left(-\Delta_\hn \right)^s \! \left(u_{k+1}\right)^m \in L^1(\hn)\cap L^\infty(\hn)$.

Let us take an arbitrary test function $\psi \in C^1_c( (0, T); L_c^{\infty}(\hn))$ and set $ \psi_k := \psi(t_k,\cdot) $. We multiply \eqref{eq-approximate} by $ (-\Delta_\hn)^{-s}\psi_k $, integrate in $ M $, and sum all terms over $ k=0,\ldots,n-1 $ to get
	$$
\sum_{k =0}^{n-1} \int_{\hn}\left[h \, (-\Delta_{\hn})^s \!\left(u_{k+1}\right)^m    +  u_{k+1}\right](-\Delta_\hn)^{-s}\psi_k \, \dg = \sum_{k =0}^{n-1} \int_{\hn} u_{k}(-\Delta_\hn)^{-s}\psi_k\, \dg\ .
$$
By using Fubini's theorem and Lemma \ref{lemma-inv}, we then obtain:
$$	\sum_{k =0}^{n-1} \int_{\hn} \left[ (-\Delta_{\hn})^{-s} u_{k+1} - (- \Delta_{\hn})^{-s} u_k \right] \psi_{k} \, \dg
	= - h \, \sum_{k=0}^{n-1} \int_{\hn} \left(u_{k+1}\right)^m \psi_{k} \, \dg \, ,
$$
	which can be rewritten as
	\begin{align*}
	\int_{\hn} \, [\psi_{n-1} \, (-\Delta_{\hn})^{-s} u_n \, & - \, \psi_0 \, (-\Delta_{\hn})^{-s} u_0] \, \dg \\
	- \underbrace{\sum_{k =1}^{n-1} \int_{\hn} \left(\psi_k - \psi_{k-1}\right) (-\Delta_{\hn})^{-s} u_k \, \dg}_{=: I_1}
	& = - \underbrace{h \, \sum_{k =0}^{n-1} \int_{\hn} (u_{k+1})^m \, \psi_k \, \dg}_{=: I_2} .
	\end{align*}
	As concerns $ I_1 $, we have:
	\begin{align}
	I_1 & =  \sum_{k =1}^{n-1}  h \, \int_{\hn} \frac{\psi_k - \psi_{k-1}}{h} \, (-\Delta_{\hn})^{-s} u_k \, \dg \label{e012}  \\
	& = \sum_{k =1}^{n-1}  h \, \int_{\hn} \partial_t \psi ({t}_k,\cdot) \, (-\Delta_{\hn})^{-s} \!\left( u_k - \tilde{u}_k\right)  \dg
	+  \sum_{k =1}^{n-1}  h \, \int_{\hn} \partial_t \psi ({t}_k,\cdot) \, (-\Delta_{\hn})^{-s}  \tilde{u}_k \, \dg + \rho_n  \, ,\nonumber
	\end{align}
	where we set $  \tilde{u}_k := u(t_k ,\cdot)  $ and
	$$
	\rho_n :=  \sum_{k =1}^{n-1}  h \, \int_{\hn} \left[ \frac{\psi_k - \psi_{k-1}}{h} - \partial_t \psi ({t}_k,\cdot) \right] (-\Delta_{\hn})^{-s} u_k \, \dg \, .
	$$
	Since $ \psi $ belongs to $ C^1_c( (0, T); L_c^{\infty}(\hn))$ and $ \{ u_k \} $ is uniformly bounded in every $ L^p(M) $ space, the same holds for $  \{ (-\Delta_{\hn})^{-s} u_k \} $ in some $ L^q(M) $ space by virtue of the continuous injections recalled above,   thus   it is readily seen that $ \rho_n \to 0 $ as $ n \to \infty $. Let us then focus on the other two integrals on the r.h.s.~of \eqref{e012}. As for the first one,   recalling that by construction

	$$\sup_{t \in [0,T]} \left\| u_n(t) - u(t) \right\|_{L^1(\hn)}\underset{n \to \infty}{\longrightarrow} 0 $$
	  (recall the proof of Proposition \ref{BCPP-Thm}),   we have:
	\begin{align*}
 &\left| \sum_{k =1}^{n-1}  h \, \int_{\hn} \partial_t \psi ({t}_k,\cdot) \, (-\Delta_{\hn})^{-s} \!\left( u_k - \tilde{u}_k\right)  \dg \right|
	=  \left| \sum_{k =1}^{n-1}  h \, \int_{\hn}  (-\Delta_{\hn})^{-s} \!\left[ \partial_t \psi ({t}_k,\cdot) \right] \!\left( u_k - \tilde{u}_k\right)  \dg \right| \\
&\leq  \, \sum_{k =1}^{n-1}  h \left\|  (-\Delta_{\hn})^{-s} \!\left[ \partial_t \psi ({t}_k) \right] \right\|_{L^\infty(M)} \left\|  u_k - \tilde{u}_k \right\|_{L^1(M)}
	\leq  \, C  \sup_{t \in [0,T]} \left\| u_n(t) - u(t) \right\|_{L^1(\hn)}  \,
	\sum_{k =1}^{n-1}  h  \underset{n \to \infty}{\longrightarrow} 0 \, ,
	\end{align*}
	since $h=T/n$, where the constant $C>0$ depends only on $N , k,c ,  s $ the support of $ \psi  $ and the $ L^\infty(\hn) $ norm of $ \partial_t \psi $, in agreement with the global estimates provided in Remark \ref{generalR}. Note that in the first passage we have exploited Fubini's theorem, which is easily seen to be applicable still in view of such estimates. On the other hand, we recognize that the second integral is nothing but a Riemann sum of the (continuous, recall Remark \ref{remdef}) function
	$$
	t \mapsto \int_{\hn} \partial_t \psi (t,\cdot) \, (-\Delta_{\hn})^{-s} \! \left[ u(t,\cdot) \right] \dg \, ,
	$$
    so that
	$$
	\sum_{k =1}^{n-1}  h \, \int_{\hn} \partial_t \psi ({t}_k,\cdot) \, (-\Delta_{\hn})^{-s}  \tilde{u}_k \, \dg
	\underset{n \to \infty}{\longrightarrow}  \int_0^T \int_{\hn} \partial_t \psi \, (-\Delta_{\hn})^{-s} u \, \dg \, {\rm d}t \, .
	$$
     Hence, by combining the above results, we can deduce that
	\begin{equation}\label{Riemann-1}
	I_1 \underset{n \to \infty}{\longrightarrow}  \int_0^T \int_{\hn} \partial_t \psi \, (-\Delta_{\hn})^{-s} u \, \dg \, {\rm d}t \, .
	\end{equation}
	A similar (even simpler) computation, exploiting the fact that also $ \{ u^m_n \} $ converges to $ u^m $ uniformly in $  L^1(M)$ thanks to \eqref{eq-wm-1}, yields
	\begin{equation}\label{Riemann-2}
	I_2 \underset{n \to \infty}{\longrightarrow}  \int_0^T \int_{\hn}  u^m \, \psi \, \dg \, {\rm d}t \, .
	\end{equation}
	Since $\psi$ is compactly supported in time, it is apparent that $ \psi_0 =0  $ and $ \psi_{n-1}=0 $ for sufficiently large $n$, whence
	\begin{equation}\label{Riemann-3}
	\int_{\hn} \, [\psi_{n-1} \, (-\Delta_{\hn})^{-s} u_n  -  \psi_0 \, (-\Delta_{\hn})^{-s} u_0] \, \dg  \underset{n \to \infty}{\longrightarrow}   0 \, .
	\end{equation}
	As a consequence of \eqref{Riemann-1}, \eqref{Riemann-2} and \eqref{Riemann-3}, we finally obtain
	$$
	 \int_0^T \int_{\hn} \partial_t \psi \, (-\Delta_{\hn})^{-s} u \, \dg \, {\rm d}t
	- \int_0^T \int_{\hn}  u^m \, \psi \, \dg \, {\rm d}t = 0\, .
	$$
Given the arbitrariness of $ T $ and $\psi$, the above identity shows that $u$ is indeed a WDS, recalling moreover that it is bounded, thus $ u^m \in L^1((0, T) ; L^{1}_{loc}(\hn) ) $, and $L^1(\hn)$ is included (with continuity) in  $ L^1_{x_0,\GM}(\hn) $ for every $x_0 \in \hn$, so that $u$ also belongs to $ C^0([0,+\infty);L^1_{x_0,\GM}(\hn)) $.
	
\end{proof}

 \subsection{Basic properties of WDS for $ {L^1(M) \cap L^\infty(M)} $ initial data}

In this subsection, we provide two crucial propositions yielding stability estimates and inequalities, for approximate WDS, that will be fundamental in order to prove our main results in Section \ref{sect.semigroups}.

In the sequel, for the sake of simplicity, when referring to ``the'' WDS  to \eqref{NFDE} corresponding to $ L^1(M) \cap L^\infty(M) $ initial data, we will implicitly mean the one constructed in Propositions \ref{BCPP-Thm} and \ref{thm-weak-mild}, which therefore enjoys all the properties stated therein.

\begin{prop}\label{crucial}
Let $ \hn $ satisfy Assumption \ref{general}. Let $u$ be the WDS to \eqref{NFDE} corresponding to any nonnnegative initial datum $u_0 \in L^1(\hn) \cap L^\infty(M) $. Then, we have
\begin{equation}\label{estimate-1}
\int_{\hn} u(t, x)  \, \GM (x, x_0) \, \dg(x) \leq \int_{\hn} u_0(x) \,  \GM(x, x_0) \, \dg(x) \qquad \text{for all $t \ge 0 $ and all $ x_0 \in\hn $} \, ,
\end{equation}
and
\begin{align}\label{main-estimate-1}
 \left( \frac{t_0}{t_1} \right)^{\frac{m}{m-1}} (t_1 - t_0)\, u^m (t_0, x_0)
 &\leq
  \int_{\hn} \left[ u(t_0, x)-u(t_1, x) \right] \GM(x, x_0) \, \dg(x)\leq (m-1) \, \frac{t^{\frac{m}{m-1}}}{t_0^{\frac{1}{m-1}}} \, u^m(t, x_0)
\end{align}
for a.e.~$ (t_0,t_1,t,x_0) \in (\mathbb{R}^+)^3 \times \hn $, with $0<t_0\leq t_1\leq t$. 
\end{prop}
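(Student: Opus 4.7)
My strategy is to test the dual identity \eqref{def_eq} against separated functions $\psi(t,x) = \eta(t)\phi(x)$ with $\eta \in C^1_c((0,T))$ and $\phi \in L^\infty_c(M)$, $\phi \ge 0$, and then to localize $\phi$ near $x_0$ so that $(-\Delta_M)^{-s}\phi$ approximates $\GM(\cdot,x_0)$. Introducing
\[
F_\phi(t) := \int_M u(t)\,(-\Delta_M)^{-s}\phi \, \dg = \int_M \phi \, (-\Delta_M)^{-s} u(t) \, \dg, \qquad G_\phi(t) := \int_M u^m(t) \, \phi \, \dg \ge 0
\]
(the two expressions for $F_\phi$ coincide by Fubini, justified as in Remark \ref{remdef}), the WDS identity reduces to $\int_0^T \eta'\,F_\phi\,dt = \int_0^T \eta\,G_\phi\,dt$, which means $F_\phi' = -G_\phi$ distributionally. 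Since $u \in C^0([0,T]; L^1_{x_0,\GM}(M))$ and Lemma \ref{lem.Phi.estimates} yields $(-\Delta_M)^{-s}\phi \le C \GM(\cdot,x_0)$, the map $t \mapsto F_\phi(t)$ is continuous, hence absolutely continuous, with
\[
F_\phi(t_0) - F_\phi(t_1) = \int_{t_0}^{t_1} G_\phi(\tau)\,d\tau \qquad \forall\, 0 \le t_0 \le t_1.
\]

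\textbf{Dirac approximation.} Fix $x_0 \in M$ and set $\phi_n := \mu_M(B_{1/n}(x_0))^{-1}\chi_{B_{1/n}(x_0)}$ for $n$ large. By the non-collapse bound \eqref{non-coll}, $\|\phi_n\|_1 = 1$ and $\|\phi_n\|_\infty \le C n^N$, so the upper half of Lemma \ref{lem.Phi.estimates} (with $\sigma = 1/n$) yields the \emph{uniform-in-$n$} domination
\[
0 \le (-\Delta_M)^{-s}\phi_n(x) \le \overline{C}\, \|\phi_n\|_\infty\, n^{-N}\, \GM(x,x_0) \le C \, \GM(x,x_0), \qquad x \ne x_0.
\]
Because $(-\Delta_M)^{-s}\phi_n(x) = \int_M \GM(x,y)\phi_n(y)\,\dg(y)$ is the $\phi_n$-average of $\GM(x,\cdot)$, and $y \mapsto \GM(x,y)$ is continuous on $M \setminus \{x\}$ (see the remark after \eqref{fract-bis}), one has $(-\Delta_M)^{-s}\phi_n(x) \to \GM(x,x_0)$ for every $x \ne x_0$. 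Moreover $u(t)\GM(\cdot,x_0) \in L^1(M)$: by \eqref{green-euc} and $u(t) \in L^1(M) \cap L^\infty(M)$, the contribution from $B_1(x_0)$ is controlled by $\|u(t)\|_\infty \int_{B_1(x_0)} \GM(\cdot,x_0)\,\dg < \infty$, while the contribution from $M \setminus B_1(x_0)$ is bounded by $C\|u(t)\|_1$ since $\GM(\cdot,x_0) \le C$ there. Dominated convergence therefore gives $F_{\phi_n}(t) \to \int_M u(t,x)\GM(x,x_0)\,\dg(x)$ for every $t \ge 0$, and letting $n \to \infty$ in $F_{\phi_n}(t) \le F_{\phi_n}(0)$ already proves \eqref{estimate-1}.

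\textbf{The two-sided estimate \eqref{main-estimate-1}.} I combine the identity from Step 1 with the B\'enilan--Crandall monotonicity \eqref{mon-est.OLD}: for a.e. $x \in M$, the map $\tau \mapsto \tau^{1/(m-1)} u(\tau,x)$ is nondecreasing, hence
\[
\left(\tfrac{t_0}{t_1}\right)^{\!m/(m-1)} u^m(t_0,x) \le u^m(\tau,x) \le \left(\tfrac{t}{\tau}\right)^{\!m/(m-1)} u^m(t,x) \qquad \forall\, t_0 \le \tau \le t_1 \le t.
\]
Integrating both inequalities against $\phi_n\,\dg\,d\tau$, the lower bound yields $F_{\phi_n}(t_0) - F_{\phi_n}(t_1) \ge (t_0/t_1)^{m/(m-1)} (t_1-t_0) \int_M u^m(t_0)\phi_n\,\dg$ directly, whereas the upper bound uses the elementary computation $\int_{t_0}^{t_1}\tau^{-m/(m-1)}\,d\tau \le (m-1)\,t_0^{-1/(m-1)}$ to produce $F_{\phi_n}(t_0) - F_{\phi_n}(t_1) \le (m-1)\,t^{m/(m-1)}\,t_0^{-1/(m-1)}\int_M u^m(t)\phi_n\,\dg$. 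Letting $n \to \infty$: the middle term was handled in Step 2, while the integrals $\int_M u^m(t_*)\phi_n\,\dg$ converge to $u^m(t_*,x_0)$ at every Lebesgue point $x_0$ of $u^m(t_*,\cdot)$, which form a full-measure subset of $M$ by Lebesgue's differentiation theorem (available since \eqref{ric} and Bishop--Gromov yield the doubling property on bounded scales). This gives \eqref{main-estimate-1}. The \emph{principal obstacle} is precisely this limit passage $\phi_n \to \delta_{x_0}$: what makes it work is the delicate cancellation between $\|\phi_n\|_\infty$ and the support-radius factor $\sigma^N$ in Lemma \ref{lem.Phi.estimates}, which stays uniformly bounded thanks to the matching lower volume bound \eqref{non-coll}; without it, $(-\Delta_M)^{-s}\phi_n$ could not be dominated by an integrable multiple of $\GM(\cdot,x_0)$ in the far field.
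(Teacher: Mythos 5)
Your proposal is correct and follows essentially the same route as the paper's proof: derive the integrated identity $F_\psi(t_0)-F_\psi(t_1)=\int_{t_0}^{t_1}G_\psi$ from the weak dual formulation with separable test functions, approximate $\delta_{x_0}$ by normalized indicators of shrinking balls, and control the passage to the limit using the uniform domination supplied by Lemma~\ref{lem.Phi.estimates} together with the non-collapse bound \eqref{non-coll}. The only cosmetic differences are that you spell out the proof of \eqref{main-estimate-1} via the B\'enilan--Crandall monotonicity \eqref{mon-est.OLD} (which the paper delegates to \cite[Prop.~3.3, Steps 3--4]{BBGG}, where the identical computation appears), and that your dominated-convergence argument for $F_{\phi_n}(t)\to\int u(t)\GM(\cdot,x_0)\,\dg$ uses the $L^1\cap L^\infty$ regularity of $u(t)$ directly rather than the paper's $L^q$/$L^{q'}$ splitting; both are valid here, and your correct phrase ``continuous with $L^1$ distributional derivative, hence absolutely continuous'' is what makes Step~1 rigorous.
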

\begin{proof}
We follow closely the lines of proof of \cite[Proposition~4.2]{BV2} and \cite[Proposition 3.3]{BBGG}, where bounded Euclidean sets and the hyperbolic space are treated, respectively.
	
In order to prove \eqref{estimate-1}, we first notice that the following identity holds:
\begin{align}\label{limit-1}
&\int_{\hn} u(t_0, x) \, (- \Delta_{\hn})^{-s} \psi(x) \, \dg(x)
- \int_{\hn} u(t_1, x) \, (- \Delta_{\hn})^{-s} \psi(x) \, \dg(x)  \notag \\
=&  \int_{t_0}^{t_1} \int_{\hn}  u^{m}(t, x) \, \psi(x) \, \dg(x) \, {\rm d}t \ge 0 \,,
\end{align}
for every nonnegative $\psi \in L^{\infty}_{c}(\hn)$, and all $ 0 <t_0<t_1 $. The proof of \eqref{limit-1} follows by using \eqref{def_eq} with the   sequence of test functions (with separate variables)
 $\psi(t, x) = \phi_{n}(t) \psi(x),$ where $ \{\phi_{n}\} \subset C^1_c((0, +\infty))$ is  {in turn}   a suitable sequence of time-dependent functions such that $\phi_{n}(t)  \rightarrow \chi_{[t_0, t_1]} (t) $ a.e.~in $ (0,+\infty) $ and  $ \frac{\rm d }{{\rm d} t}  \phi_n \rightarrow \delta_{t_0} - \delta_{t_1} $, as $ n \to \infty $. Then, equation \eqref{limit-1} just follows by passing to the limit as $n\rightarrow \infty$, recalling the continuity properties of $ t \mapsto (- \Delta_{\hn})^{-s} u(t,\cdot) $ (see Remark \ref{remdef}) and exploiting Fubini's theorem.

Now fix $x_0 \in \hn$, and consider the sequence
$$
\psi_{n}^{(x_0)} :=\frac{\chi_{B_{1/n}(x_0)}}{\mu_M\!\left( B_{1/n}(x_0) \right)} \qquad \forall n \in \mathbb{N} \, .
$$
Clearly, each  $\psi^{(x_0)}_{n} $ is nonnegative and belongs to $ L^{\infty}_{c}(\hn)$, thus it is an admissible test function in \eqref{limit-1}. Furthermore, since for every $ x \in \hn $ the map  $ y \mapsto \GM(y,x)$ is continuous in $ \hn \setminus \{ x \} $ (recall the discussion in Subsection \ref{GAC}) and $\psi_{n}^{(x_0)} \rightarrow \delta_{x_0}$ as $n \to \infty$ in the sense of Radon measures, we obtain
\begin{equation}\label{boundgreenn-0}
(- \Delta_{\hn})^{-s}\big(\psi_{n}^{(x_0)} \big)(x) = \frac{1}{ \mu_M \! \left( B_{1/n}(x_0) \right)} \int_{B_{1/n}(x_0)} \GM(y, x)\,\dg(y) \underset{n \to \infty}{\longrightarrow}  \GM(x_0,x) = \GM(x,x_0) \, ,
\end{equation}
for every $ x \neq x_0 $. 
On the other hand, from Lemma \ref{lem.Phi.estimates} with $\sigma={1}/{n}$ and the bound $\big\| \psi_{n}^{(x_0)} \big\|_{\infty} \leq Cn^{N}$ (which follows from \eqref{non-coll} or simply the local Euclidean structure of any smooth Riemannian manifold), we deduce that there exists  $\overline C=\overline C(N,k,c,s)>0$ such that
\begin{equation}\label{boundgreenn}
(- \Delta_{\hn})^{-s}\big(\psi_{n}^{(x_0)} \big)(x) \leq \overline C \, \GM(x,x_0)  \qquad   \forall x \neq x_0 \, , \ \forall n \in \mathbb{N} \, .
\end{equation}
By virtue of \eqref{green-euc}, we know that $\GM(\cdot, x_0) \in L^q(B_R(x_0))$ for all $R>0$ and $1 \le q < \frac{N}{N -2s}$, so that by \eqref{boundgreenn-0}, \eqref{boundgreenn} and dominated convergence we infer that
\begin{equation*}
(- \Delta_{\hn})^{-s} \big( \psi_{n}^{(x_0)} \big) (x) \underset{n \to \infty}{\longrightarrow}  \GM( x , x_0 )
 \qquad \mbox{in} \ L^{q}(B_R(x_0))
\end{equation*}
for all such $q$. Hence, for every fixed $R \ge  1$ and $ \tau>0 $, it holds
\begin{align*}
&\left| \int_{B_R(x_0)} u(\tau, x) \, (- \Delta_{\hn})^{-s} \big( \psi_{n}^{(x_0)} \big)(x) \, \dg(x)
-  \int_{B_R(x_0)} u(\tau, x) \, \GM(x, x_0) \, \dg(x)\right| \\
 \leq & \left\| u(\tau) \right\|_{L^{q'}(\hn)}
 \left\| (- \Delta_{\hn})^{-s} \big( \psi_{n}^{(x_0)} \big) - \GM( \cdot , x_0) \right\|_{L^{q}(B_R(x_0))} \underset{n \to \infty}{\longrightarrow}  0 \, .
\end{align*}
Moreover, we also have that
$$
\left| \int_{\hn \setminus B_R(x_0)} u(\tau, x) \, (- \Delta_{\hn})^{-s} \big( \psi_{n}^{(x_0)} \big)(x) \, \dg(x)
-  \int_{\hn\setminus B_R(x_0)} u(\tau, x) \, \GM(x, x_0) \, \dg(x) \right| \underset{n \to \infty}{\longrightarrow}  0 \, ;
$$
indeed, by \eqref{boundgreenn} and the fact that $u (\tau, \cdot) \in L^1_{x_0,{\GM}}(\hn)$, it holds
$$
u(\tau, x) \, (- \Delta_{\hn})^{-s}\big(\psi_{n}^{(x_0)} \big)(x) \leq \overline C \, u(\tau, x) \, \GM(x,x_0) \in L^1(\hn \setminus B_R(x_0)) \quad  \text{for a.e.~} x \neq x_0 \, , \ \forall n \in \mathbb{N} \, ,
$$
whence the claim, still by \eqref{boundgreenn-0} and dominated convergence. As a result, we can apply \eqref{limit-1} with $\psi=\psi_{n}^{(x_0)}$ and let $ n \to \infty $, to get
$$
\int_{\hn} u(t_1, x)  \, \GM (x, x_0) \, \dg(x) \leq \int_{\hn} u(t_0,x) \,  \GM(x, x_0) \, \dg(x)  \, ,
$$
namely inequality \eqref{estimate-1} upon choosing $ t_1=t $ and letting $ t_0 \to 0 $ (it is readily seen that the r.h.s.~is actually stable).

Finally, we omit the proof of \eqref{main-estimate-1}, as it follows by repeating the arguments of \cite[Proposition 3.3 -- Step 3 and Step 4]{BBGG}, with inessential changes (just note that the starting point is still \eqref{limit-1} plus the monotonicity property \eqref{mon-est.OLD}). 
\end{proof}

\begin{prop}\label{monotonePhi}
	Let $ \hn $ satisfy Assumption \ref{general}. Let $u$ be the WDS to \eqref{NFDE} corresponding to any nonnnegative initial datum $u_0 \in L^1(\hn) \cap L^\infty(M) $. Then, we have
\begin{align}\label{newmonotonicity2}
\left\| u(t)\right\|_{L^1_{x_0,\GM}} \leq C \left\| u_0\right\|_{L^1_{x_0,\GM}}  \qquad \text{for all $t \ge 0$ and all $ x_0  \in \hn $} \, ,
\end{align}
for some $C=C(N,k,c,s)>0$. More in general, if $ u,v $ are two \emph{ordered} WDS to problem \eqref{NFDE} corresponding to nonnegative initial data $u_0 , v_0 \in L^1(\hn)\cap L^\infty(\hn)$, respectively, it holds
\begin{align}\label{newmonotonicity2-bis}
\left\| u(t) - v(t) \right\|_{L^1_{x_0,\GM}} \leq C \left\| u_0 - v_0 \right\|_{L^1_{x_0,\GM}}  \qquad \text{for all $t \ge 0$ and all $ x_0  \in \hn $} \, .
\end{align}
Furthermore, for all $0<R\leq 1$, we have
\begin{align}\label{newmonotonicity}
R^{N-2s}\int_{\hn\setminus B_R(x_0)} u(t,x) \, \GM(x,x_0) \, \dg(x) \leq C \left\| u(t)\right\|_{L^1_{x_0,\GM}} \qquad \text{for all $t \ge 0$ and all $ x_0  \in \hn $} \, ,
\end{align}
for another $C=C(N,k,c,s)>0$.
\end{prop}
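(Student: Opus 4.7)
The plan is to reduce all three inequalities to the key monotonicity identity \eqref{limit-1}, combined with the two-sided comparison between potentials of test functions and the weighted $L^1$ norm provided by Lemma \ref{potentials}.

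For \eqref{newmonotonicity2}, I would fix $x_0\in\hn$ and take the function $\psi=\chi_{B_{1/2}(x_0)}$ supplied by Lemma \ref{potentials}, which is admissible in \eqref{limit-1}. Since the right-hand side of \eqref{limit-1} is nonnegative (as $u^m,\psi\ge 0$), the map $t\mapsto\int_{\hn}u(t)\,(-\Delta_{\hn})^{-s}\psi\,\dg$ is nonincreasing, so letting $t_0\to 0$ gives
\begin{equation*}
\int_{\hn}u(t,x)\,(-\Delta_{\hn})^{-s}\psi(x)\,\dg(x)\le\int_{\hn}u_0(x)\,(-\Delta_{\hn})^{-s}\psi(x)\,\dg(x).
\end{equation*}
Applying the upper bound in \eqref{quasi-pot} on the left and the lower bound on the right yields \eqref{newmonotonicity2} with $C=c_2/c_1$, uniformly in $x_0$ because the constants in Lemma \ref{potentials} are independent of $x_0$.

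For \eqref{newmonotonicity2-bis}, the same argument applies to the difference $w:=u-v\ge 0$. Indeed, running \eqref{limit-1} for both $u$ and $v$ against the same $\psi$ and subtracting gives
\begin{equation*}
\int_{\hn}(u(t_1)-v(t_1))\,(-\Delta_{\hn})^{-s}\psi\,\dg-\int_{\hn}(u(t_0)-v(t_0))\,(-\Delta_{\hn})^{-s}\psi\,\dg=-\int_{t_0}^{t_1}\!\!\int_{\hn}(u^m-v^m)\psi\,\dg\,\dt\le 0,
\end{equation*}
since $u\ge v\ge 0$ and $m>1$ force $u^m\ge v^m$. Then letting $t_0\to 0$ and invoking Lemma \ref{potentials} exactly as above (this time applied to the nonnegative function $u-v$) delivers \eqref{newmonotonicity2-bis}.

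For \eqref{newmonotonicity}, I would argue by direct pointwise estimates on the Green function, independently of the FPME. Split
\begin{equation*}
\int_{\hn\setminus B_R(x_0)}u(t,x)\,\GM(x,x_0)\,\dg(x)=\int_{B_1(x_0)\setminus B_R(x_0)}u(t,x)\,\GM(x,x_0)\,\dg(x)+\int_{\hn\setminus B_1(x_0)}u(t,x)\,\GM(x,x_0)\,\dg(x).
\end{equation*}
The second integral is trivially bounded by $\|u(t)\|_{L^1_{x_0,\GM}}$ by definition \eqref{normLGx0}. For the first, the Euclidean-type bound \eqref{green-euc} gives $\GM(x,x_0)\le C\,r(x,x_0)^{-(N-2s)}\le C\,R^{-(N-2s)}$ on $B_1(x_0)\setminus B_R(x_0)$, hence
\begin{equation*}
\int_{B_1(x_0)\setminus B_R(x_0)}u(t,x)\,\GM(x,x_0)\,\dg(x)\le\frac{C}{R^{N-2s}}\int_{B_1(x_0)}u(t,x)\,\dg(x)\le\frac{C}{R^{N-2s}}\,\|u(t)\|_{L^1_{x_0,\GM}}.
\end{equation*}
Multiplying through by $R^{N-2s}$ and using $R\le 1$ on the remaining term gives \eqref{newmonotonicity}. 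The only mildly delicate point in the whole proposition is making Lemma \ref{potentials} interact with \eqref{limit-1}; the rest is bookkeeping.
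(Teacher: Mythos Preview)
Your proofs of \eqref{newmonotonicity2} and \eqref{newmonotonicity2-bis} coincide with the paper's: both combine the monotonicity in $t$ coming from \eqref{limit-1} (for $u$, respectively for the ordered difference $u-v$) with the two-sided bound of Lemma \ref{potentials}, and let $t_0\to 0$.

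For \eqref{newmonotonicity} your route is genuinely different and more elementary. The paper argues through the potential: it applies the lower bound in Lemma \ref{lem.Phi.estimates} with $\psi=\chi_{B_{1/2}(x_0)}$, namely $(-\Delta_\hn)^{-s}\psi(x)\ge \underline C\,(1\wedge r(x,x_0)^{N-2s})\,\GM(x,x_0)$, to dominate $R^{N-2s}\!\int_{\hn\setminus B_R(x_0)} u\,\GM(\cdot,x_0)$ by $\int_\hn u\,(-\Delta_\hn)^{-s}\psi$, and then controls the latter by the $L^1_{x_0,\GM}$ norm via the upper potential bounds (Remark \ref{generalR}) together with the lower Green bound \eqref{green-est-low} on the shell $B_2(x_0)\setminus B_1(x_0)$. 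You bypass all of this by splitting at radius $1$ and using only the pointwise Euclidean-type estimate \eqref{green-euc} on the inner annulus, which immediately gives the $R^{-(N-2s)}$ factor. Your argument is shorter and uses strictly less machinery; the paper's approach has the (minor) conceptual advantage of keeping everything phrased through the single potential $(-\Delta_\hn)^{-s}\psi$ that already drives the first two estimates, but for \eqref{newmonotonicity} your direct computation is cleaner.
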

\begin{proof}
From \eqref{limit-1} we know that
$$ \int_{\hn} u(t, x) \, (- \Delta_{\hn})^{-s} \psi (x) \, \dg(x) \leq \int_{\hn} u(t_0, x) \, (- \Delta_{\hn})^{-s}  \psi (x) \, \dg(x)$$
for all $t \geq t_0>0$ and all nonnegative $\psi \in L^{\infty}_{c}(\hn)$. Therefore, by Lemma \ref{potentials} we readily deduce that
$$
\left\| u(t) \right\|_{L^1_{x_0,\GM}} \leq C \left\| u(t_0) \right\|_{L^1_{x_0,\GM}}
$$
for some $C=C(N,k,c,s)>0$. Estimate \eqref{newmonotonicity2} just follows by letting $t_0 \rightarrow 0$, recalling that $ u(t) $ is continuous as a curve with values in $ L^1_{x_0,\GM}(M) $.

As far as \eqref{newmonotonicity2-bis} is concerned, it is enough to observe that by taking the difference between \eqref{limit-1} and the same identity applied to $v$ we have
\begin{align*}
&\int_{\hn} \left[u(t_0, x)-v(t_0,x) \right] (- \Delta_{\hn})^{-s} \psi(x) \, \dg(x)
- \int_{\hn} \left[ u(t_1, x)-v(t_1,x) \right] (- \Delta_{\hn})^{-s} \psi(x) \, \dg(x)  \notag \\
=&  \int_{t_0}^{t_1} \int_{\hn}  \left[ u^{m}(t, x) -v^{m}(t, x)  \right] \psi(x) \, \dg(x) \, {\rm d}t \ge 0 \,,
\end{align*}
where we assume that $ u \ge v $ (otherwise one swaps the two solutions). Then, we repeat exactly the same argument as above.

In order to prove \eqref{newmonotonicity}, we pick $\psi(x)=\psi^{(x_0)}(x)=\chi_{B_{1/2}(x_0)}(x) $. By Lemma \ref{lem.Phi.estimates} (with $\sigma=1/2$), we infer that there exist three positive constants $\underline C, C_1, C_2$, only dependent on $N,k,c,s$, such that for every $0<R\leq 1$ it holds
\begin{align*}
& \, R^{N-2s} \, \underline C \int_{\hn\setminus B_R(x_0)} u(t, x) \,  \GM(x,x_0) \, \dg(x) \leq \int_{\hn} u(t, x) \, (- \Delta_{\hn})^{-s}\big(\psi^{(x_0)}\big) (x) \, \dg(x) \\
\leq & \, \int_{B_2(x_0)} u(t, x) \, (- \Delta_{\hn})^{-s}\big(\psi^{(x_0)}\big) (x) \, \dg(x) +  \int_{\hn\setminus B_2(x_0)} u(t, x) \, (- \Delta_{\hn})^{-s}\big(\psi^{(x_0)}\big) (x) \, \dg(x) \\
\leq & \, C_1 \int_{B_2(x_0)} u(t, x) \, \dg(x)+ C_2 \int_{\hn\setminus B_2(x_0)} u(t, x)\,  \GM(x,x_0) \, \dg(x) \, ,
\end{align*}
where for the upper bounds we have exploited Remark \ref{generalR}. Estimate \eqref{newmonotonicity}, finally, follows upon noticing that
 $$\int_{B_2(x_0)\setminus B_1(x_0)} u(t, x) \, \dg(x) \leq \frac{1}{\kappa} \int_{B_2(x_0)\setminus B_1(x_0)}\GM(x,x_0)\, u(t, x) \, \dg(x) $$
with $ \kappa := \displaystyle{\min_{1\leq r(x,x_0)\leq 2}} \GM(x,x_0)$ (recall the uniform lower bound \eqref{green-est-low}).
\end{proof}

\section{Proof of Theorems \ref{thm-existence}, \ref{thm.smoothing.HN-like} and \ref{smoothPhi}} \label{sect.semigroups}
Following a similar strategy to \cite{BV2},  all the main results will first be established for approximate WDS (see Section \ref{smooth}) and then extended to general WDS, by means of a (relatively) standard limiting process relying on the monotone approximation of a nonnegative initial datum $ u_0$, that merely belongs to  $ L^1_{\GM}(\hn) $, with a sequence of ``truncated'' data $ u_{0,n} \in L^1(M) \cap L^\infty(M) $.

Before, we provide an anticipation of a smoothing effect, that will be needed only in the proof of Theorem \ref{thm-existence} when passing to the limit in the second term on the l.h.s.~of \eqref{def_eq}, which requires (at least) boundedness $ L^m_{loc}(M) $. Moreover, certain estimates provided along the proof will also serve as key starting points for establishing the ``real'' smoothing effects stated in Theorems \ref{thm.smoothing.HN-like} and \ref{smoothPhi}.
\begin{prop}\label{smoothPhi-lemma}
Let $ \hn $ satisfy Assumption \ref{general}. Let $u$ be the WDS to \eqref{NFDE} corresponding to any nonnegative initial datum $u_0 \in L^1(\hn) \cap L^\infty(M) $. Then, there exists $ C = C (N,k,c,s,m)>0$ such that
	\begin{equation}\label{thm.smoothing.HN-like.estimate3-lemma}
	\left\| u(t) \right\|_{L^\infty(M)} \le  C \left( \frac{\left\| u_0\right\|_{L^1_{\GM}}^{2s \vartheta_1}}{t^{N \vartheta_1}} \vee \left\| u_0\right\|_{L^1_{\GM}} \right) \qquad \forall t > 0 \, .
	\end{equation}
\end{prop}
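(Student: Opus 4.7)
The plan is to combine the key pointwise lower bound from \eqref{main-estimate-1} with a suitable splitting of the resulting Green integral at a free radius $R\in(0,1]$, and then to optimize in $R$. Specifically, the choice $t_0=t/2$ and $t_1=t$ in the left inequality of \eqref{main-estimate-1} yields, for a.e.\ $x_0\in M$,
\begin{equation*}
C(m)\,t\, u^m(t/2,x_0) \le \int_M u(t/2,x)\,\GM(x,x_0)\,\dg(x),
\end{equation*}
after discarding the harmless nonnegative term $-\int u(t)\,\GM(\cdot,x_0)\,\dg$. Splitting this integral over $B_R(x_0)$ and its complement is what reduces the problem to two well-understood quantities.

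For the local piece, I would use the universal bound \eqref{green-euc} together with the area estimate $\mathcal{H}_M^{N-1}(\partial B_r(x_0))\le Cr^{N-1}$ for $r\in(0,3)$ (a consequence of Bishop--Gromov via \eqref{ric}, already exploited in the proof of Lemma \ref{lem.Phi.estimates}). The coarea formula then gives $\int_{B_R(x_0)}\GM(x,x_0)\,\dg(x)\le CR^{2s}$ for every $R\le 1$, so the local contribution is at most $C\,\|u(t/2)\|_\infty R^{2s}$. For the nonlocal piece, estimate \eqref{newmonotonicity} of Proposition \ref{monotonePhi} applies directly and, combined with \eqref{newmonotonicity2}, bounds it by $C\,R^{-(N-2s)}\,\|u_0\|_{L^1_{\GM}}$. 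Taking the essential supremum in $x_0$ on both sides, and writing $a:=\|u(t/2)\|_\infty$, $b:=\|u_0\|_{L^1_{\GM}}$, I arrive at the master inequality
\begin{equation*}
t\,a^m \le C\bigl( a R^{2s} + b R^{-(N-2s)} \bigr) \qquad \forall R\in(0,1].
\end{equation*}

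The final step is to optimize the right-hand side in $R$. Balancing the two terms suggests $R^N\sim b/a$, which lies in $(0,1]$ precisely when $b\le a$; in that regime the master inequality gives $t\,a^m\le C\,a^{(N-2s)/N}\,b^{2s/N}$, and since $m-(N-2s)/N = 1/(N\vartheta_1)$ with $\vartheta_1=1/(2s+N(m-1))$, one reads off $a\le C\,b^{2s\vartheta_1}/t^{N\vartheta_1}$, which is the first term in the maximum of \eqref{thm.smoothing.HN-like.estimate3-lemma}. In the complementary regime $b>a$ there is nothing to prove, since then $a< b$ coincides with the second term of that maximum. Renaming $t/2$ as $t$ concludes the argument.

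I expect the main obstacle to be the $L^\infty$-independent tail bound \eqref{newmonotonicity}, which is available only for $R\le 1$; this constraint is precisely what forces the case distinction in the optimization and, thereby, the $\vee$ structure of the estimate. A minor but necessary point is that the pointwise bound from \eqref{main-estimate-1} holds only for a.e.\ $x_0$, so one must pass to essential suprema in $x_0$, but this is harmless because $\|u(t/2)\|_\infty$ is defined as such.
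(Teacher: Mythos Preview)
Your argument is correct and follows the same overall scheme as the paper: apply the left inequality of \eqref{main-estimate-1}, split the resulting Green integral at a free radius $R\in(0,1]$, bound the near part via the coarea estimate $\int_{B_R(x_0)}\GM(\cdot,x_0)\,\dg\le CR^{2s}$ and the far part via \eqref{newmonotonicity}--\eqref{newmonotonicity2}, then optimize in $R$.

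The one genuine difference lies in how the optimization is closed. The paper first applies Young's inequality to the local term, obtaining
\[
(I)\le \tfrac{1}{m}\|u(t_0)\|_\infty^m + \tfrac{C}{t_0^{m/(m-1)}}\,R^{2sm/(m-1)},
\]
absorbs the first summand into the left-hand side, and then makes the explicit choice \eqref{greenR-nn}. That choice is admissible only for $t_0\le C\|u_0\|_{L^1_{\GM}}^{-(m-1)}$, so the paper needs an additional ingredient, the $L^\infty$-nonexpansivity of the mild semigroup \eqref{eq-wm-1}, to propagate the bound obtained at the borderline time $t_\ast$ to all larger times. Your route is more direct: keeping the linear factor $aR^{2s}$ and handling the constraint $R\le1$ by the dichotomy $b\le a$ versus $b>a$ yields the $\vee$ structure immediately, without invoking Young or the time-monotonicity of $\|u(t)\|_\infty$. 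The trade-off is that the paper's version, after absorption, makes the scaling of the threshold time $t_\ast$ explicit and isolates the intermediate estimate \eqref{greenR-bis} in the form $\|u(t_0)\|_\infty\le C\,\|u(t_0)\|_{L^1_{\GM}}^{2s\vartheta_1}/t_0^{N\vartheta_1}$ (with $\|u(t_0)\|_{L^1_{\GM}}$ rather than $\|u_0\|_{L^1_{\GM}}$ on the right), which is reused later in the proof of Theorem~\ref{smoothPhi}. Both approaches require that $\|u(t)\|_\infty<\infty$ a priori, which is guaranteed by \eqref{eq-wm-1} for the $L^1\cap L^\infty$ data at hand.
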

\begin{proof}  We first comment that, in the following, inessential numerical constants will always be denoted by $C$.
	The key ingredient is the following estimate, which is a simple consequence of \eqref{main-estimate-1} with the choice $t_1 = 2t_0$ (let $ t_0>0 $ be arbitrary but fixed):
	\begin{align}\label{thm.smoothing.HN-like.proof.1}
	u^m&(t_0, x_0) 
	\leq \frac{2^{\frac{m}{m-1}}}{ t_0} \int_{\hn} u(t_0, x) \, \GM(x, x_0) \, \dg(x)\\
	&= \underbrace{\frac{ 2^{\frac{m}{m-1}}}{ t_0}  \int_{B_{R}(x_0)}   u(t_0, x) \, \GM(x, x_0)  \, \dg(x)}_{(I)}
	+  \underbrace{\frac{  2^{\frac{m}{m-1}}}{ t_0} \int_{\hn\setminus B_{R}(x_0)}  \, u(t_0, x)  \, \GM(x, x_0) \, \dg(x)}_{(II)} \, , \nonumber
	\end{align}
	for all $R>0$. Note that \eqref{thm.smoothing.HN-like.proof.1}, actually, holds for almost every $ t_0>0 $ and $ x_0 \in \hn $, but this is not an issue to our purposes given the time continuity properties of $u$ and the fact that we take essential suprema (even though we will not mention it explicitly).
	
	In order to handle $ (I) $, first we prove the following Green function estimate, valid under our running assumptions:
	\begin{equation}\label{Hyp.Green.RN.01}
	\int_{B_R(x_0)}\GM(x,x_0) \, \dg(x) \le C R^{2s} \qquad \text{for all } 0<R\le 1 \, ,
	\end{equation}
	for some $C=C(N,k,c,s)>0$. To this aim, let $\mathcal{H}_M^{N-1}$ denote the $(N-1)$-dimensional Hausdorff measure on $ M$. As a consequence of the  celebrated Bishop-Gromov theorem (see e.g.~\cite[Theorem 1.1]{H}), it is readily seen that $\mathcal{H}_M^{N-1}(\partial B_r(x_0))\le C r^{N-1}$ for every $r\in(0,1]$ and every $x_0\in M$, where $C$ is a suitable positive constant depending only on $N$ and the curvature bound \eqref{ric}. Therefore, taking advantage of \eqref{green-euc} and the coarea formula we infer, for all $0<R\le1$:
	\begin{equation*}\label{greenint}\begin{aligned}
	\int_{B_R(x_0)}\GM(x,x_0) \, \dg(x) &\le C\int_{B_R(x_0)}\,\frac{1}{ r(x,x_0)^{N-2s}}\, \dg(x)\\
	& =  C\int_0^{R}\frac1{r^{N-2s}}\,\mathcal{H}_M^{N-1}(\partial B_r(x_0)) \, {\textrm d}r\le C R^{2 s} \, ,
	\end{aligned}
	\end{equation*}
	which is precisely the claim. Having established \eqref{Hyp.Green.RN.01}, it follows that
	\begin{align}\label{thm.smoothing.HN-like.proof.2}
	(I)&\le \frac{1}{m} \left\| u(t_0) \right\|_{L^\infty(\hn)}^m + \frac{C}{t_0^{\frac{m}{m-1}}} \left( \int_{B_{R}(x_0)} \GM(x, x_0) \, \dg(x) \right)^{\frac{m}{m-1}} \\
	&\le \frac{1}{m} \left\|u(t_0)\right\|_{L^\infty(\hn)}^m+\frac{C}{t_0^{\frac{m}{m-1}}}	\, R^{\frac{2sm}{m-1}} \qquad \text{for all } 0 < R \le 1 \,,\nonumber
	\end{align}
	where $C>0$ is another constant depending only on $N,k,c,s,m$.
	
	In order to bound $(II)$, note that from \eqref{newmonotonicity} we know that for all $x_0\in \hn$ and all $0<R\leq 1$ it holds
	$$
	R^{N-2s}\int_{\hn\setminus B_R(x_0)} u(t_0,x) \, \GM(x,x_0) \, \dg(x) \leq C \left\| u(t_0) \right\|_{L^1_{x_0,\GM}}  .
	$$
	Hence, going back to \eqref{thm.smoothing.HN-like.proof.1} and \eqref{thm.smoothing.HN-like.proof.2}, we deduce that
	\begin{align*}
	u^m(t_0, x_0)	\leq  \frac{1}{m} \left\| u(t_0) \right\|_{L^\infty(M)}^m + \frac{C}{t_0^{\frac{m}{m-1}}}  \, R^{\frac{2sm}{m-1}}  +  \frac{C}{t_0\,R^{N-2s}} \left\| u(t_0) \right\|_{L^1_{x_0,\GM}} ,
	\end{align*}
	still for all $ 0 < R \le 1 $. Taking the supremum over $x_0\in M$ and recalling the definition of the norm $ \| \cdot \|_{L^1_{\GM}} $, we thus obtain
	\begin{equation}\label{greenR}
	\begin{aligned}
	\left\| u(t_0) \right\|_{L^\infty(M)}^m \le C\, \frac{R^{\frac{2sm}{m-1}}}{t_0^{\frac{m}{m-1}}} \left( 1+ \frac{t_0^{\frac{1}{m-1}}  \left\| u(t_0) \right\|_{L^1_{\GM}}  }{R^{\frac{1}{(m-1)\vartheta_1}}}\right) .
	\end{aligned}
	\end{equation}
	Note that both sides are finite since $ u(t_0) \in L^1(M) \cap L^\infty(M) $, and $ L^1(M) $ is included with continuity in $ L^1_{\GM}(M) $. By choosing
	\begin{equation}\label{greenR-nn}
	 R=\left(t_0^{\frac{1}{m-1}} \left\| u(t_0) \right\|_{L^1_{\GM}}  \right)^{(m-1)\vartheta_1}
\end{equation}
	 and plugging it into \eqref{greenR}, we end up with the following bound:
	 \begin{equation}\label{greenR-bis}
	 \left\| u(t_0) \right\|_{L^\infty(M)} \le  C  \, \frac{\left\| u(t_0) \right\|_{L^1_{\GM}}^{2s \vartheta_1}}{t_0^{N \vartheta_1}} \le C  \, \frac{\left\| u_0 \right\|_{L^1_{\GM}}^{2s \vartheta_1}}{t_0^{N \vartheta_1}} \, ,
	 \end{equation}
	 where in the last passage we have exploited the stability estimate \eqref{newmonotonicity2} (up to taking the supremum over $x_0 \in \hn $). We point out that \eqref{greenR-bis} holds subject to $ R \le 1 $ in \eqref{greenR-nn}, which is achieved provided
	 \[
	 \left(t_0^{\frac{1}{m-1}} \left\| u_0\right\|_{L^1_{\GM}}  \right)^{(m-1)\vartheta_1}\le C \, ,
	 \]
  by virtue of \eqref{newmonotonicity2} (where we recall that a multiplicative constant is present on the right-hand side bound).
	 
  Given the arbitrariness of $t_0$, we finally obtain
	 $$
	 \left\| u(t) \right\|_{L^\infty(M)} \le C  \, \frac{\left\| u_0 \right\|_{L^1_{\GM}}^{2s \vartheta_1}}{t^{N \vartheta_1}} \qquad  \text{for all }  t\le C \left\| u_0 \right\|_{L^1_{\GM}}^{-(m-1)}.
	 $$
	 It is readily checked that, at the borderline time $  t_\ast  =C \left\| u_0 \right\|_{L^1_{\GM}}^{-(m-1)}  $,  the above bound entails  $ \| u(t_\ast) \|_{L^\infty(M)} \le C \, \| u_0 \|_{L^1_{\GM}} $, whence \eqref{thm.smoothing.HN-like.estimate3-lemma} follows from \eqref{greenR-bis} recalling that $ L^p(M) $ norms are nonincreasing along the nonlinear semigroup constructed in Subsection \ref{NS-abstract}. 
	 	\end{proof}

\subsection{Proof of Theorem \ref{thm-existence}}
We proceed by (monotone) approximation in terms of the mild solutions introduced in Subsection \ref{NS-abstract}  corresponding $ L^1(M) \cap L^\infty(M) $ initial data, which by virtue of Proposition \ref{thm-weak-mild} are also WDS. More precisely, our weak dual solution corresponding to a nonnegative $ u_0 \in L^1_{\GM}(M)$ will be obtained as a limit of a pointwise monotone sequence of such mild solutions.

To our purposes, for every $ n \in \mathbb{N} $ we set
\begin{equation*}\label{monotone-approx-1}
u_{0,n} := \chi_{B_n(o)} \left(  u_0 \wedge n  \right) ,
\end{equation*}
where $ o \in \hn $ is any fixed point. Since each $ u_{0,n} $ is bounded and compactly supported, it is apparent that $u_{0,n}\in L^1(\hn) \cap L^{\infty}(\hn)$, and by construction $u_{0,n} \leq u_{0,n+1} \leq u_{0} $. If we let $ u_n = u_n(t,x) $ denote the (mild and) WDS to \eqref{NFDE} with initial datum $ u_{0,n} $, in view of the comparison principle entailed by \eqref{comparison.L1} we deduce that $u_n(t,x)\le u_{n+1}(t,x)$ for every $ t>0 $ and a.e.~$ x \in \hn $, namely also the sequence of solutions $ \{ u_n \} $ is monotone increasing. As a result, the pointwise limit
\begin{equation}\label{def.WDS.approx}
u(t,x):=\lim_{n\to\infty}u_{n}(t,x)
\end{equation}
exists, and it is therefore our candidate to be the weak dual solution starting from $ u_0 $.  What follows aims at showing that $u$ is indeed the sought solution.

\noindent\textbf{Convergence occurs in $ C^0([0,T]; L^1_{x_0,\GM}(M)) $.} Thanks to the stability estimate \eqref{newmonotonicity2}, we have that
$$
\left\| u_n(t)\right\|_{L^1_{x_0,\GM}} \leq C \left\| u_{0,n}\right\|_{L^1_{x_0,\GM}}  \qquad \text{for all $t \ge 0$ and all $ x_0  \in \hn $} \, .
$$
By the monotone convergence theorem, it is plain that $ u_{0,n} \to u_0 $ in $ L^1_{x_0,\GM}(M) $ as $ n \to \infty $, so that by Fatou's lemma we obtain
\begin{equation}\label{obv}
\left\| u(t)\right\|_{L^1_{x_0,\GM}} \leq C \left\| u_{0}\right\|_{L^1_{x_0,\GM}}  \qquad \text{for all $t \ge 0$ and all $ x_0  \in \hn $} \, ,
\end{equation}
which in particular yields $ u(t,\cdot) \in L^1_{x_0,\GM}(M) $ for all $t>0$. Hence, again by monotone convergence, we deduce that $ u_{n}(t,\cdot) \to u(t,\cdot) $ in $ L^1_{x_0,\GM}(M) $ as $ n \to \infty $. Therefore, if we show that the sequence $ \{ u_n \} $ is uniformly equicontinuous in $ C^0([0,T]; L^1_{x_0,\GM}(M)) $, for every fixed $ T>0 $, by Ascoli-Arzelà theorem we can infer that convergence actually takes place in such space, and in particular $ u \in C^0([0,T]; L^1_{x_0,\GM}(M))  $ as required in Definition \ref{defi_WDS}.

To this end, let $ \varepsilon>0 $ be fixed. We pick $ n_0 \in \mathbb{N} $ so large that
$$
\left\| u_{0} - u_{0,n_0} \right\|_{L^1_{x_0,\GM}}  \le \frac{\varepsilon}{4C} \, ,
$$
where $C>0$ is the constant appearing in \eqref{newmonotonicity2-bis}. Since $ t \mapsto u_{n_0}(t,\cdot) $ is a (uniformly) continuous curve with values in $ L^1_{x_0,\GM}(M) $, there exists $ \delta>0 $ such that $ \left\| u_{n_0}(t) - u_{n_0}(s) \right\|_{L^1_{x_0,\GM}}  \le \frac \varepsilon 2 $ for every $ t,s \ge 0 $ with $ |t-s| \le \delta $. Hence, by virtue of the triangle inequality and the stability estimate \eqref{newmonotonicity2-bis} (recall that the sequence $ \{ u_n \} $ is ordered), we obtain:
$$
\begin{aligned}
\left\| u_n(t) - u_n(s) \right\|_{L^1_{x_0,\GM}} \! \le & \left\| u_{n}(t) - u_{n_0}(t) \right\|_{L^1_{x_0,\GM}}  \! + \left\| u_{n_0}(t) - u_{n_0}(s) \right\|_{L^1_{x_0,\GM}} \! + \left\| u_n(s) - u_{n_0}(s) \right\|_{L^1_{x_0,\GM}} \\
\le & \, 2C \left\| u_{0,n} - u_{0,n_0} \right\|_{L^1_{x_0,\GM}} \! + \left\| u_{n_0}(t) - u_{n_0}(s) \right\|_{L^1_{x_0,\GM}}  \! \le \frac{\varepsilon}{2} + \frac{\varepsilon}{2} = \varepsilon \, ,
\end{aligned}
$$
for every $ n \ge n_0 $ and $ t,s \ge 0 $ with $ |t-s| \le \delta $. This proves uniform equicontinuity in $ L^1_{x_0,\GM}(M) $, so that $ t \mapsto u(t,\cdot) $ is also a continuous curve with values in $ L^1_{x_0,\GM}(M) $ satisfying $ u(0,\cdot)=u_0 $.

\noindent\textbf{Convergence occurs in $ L^m((0,T); L^m_{loc}(M)) $.} Thanks to the smoothing effect established in Proposition \ref{smoothPhi-lemma}, along with the fact that $ u_{0,n} \le u_0 $, we have:
	\begin{equation}\label{thm.smoothing.HN-like.estimate3-lemma-proof}
\left\| u_n(t) \right\|_{L^\infty(M)} \le  C \left( \frac{\left\| u_{0,n}\right\|_{L^1_{\GM}}^{2s \vartheta_1}}{t^{N \vartheta_1}} \vee \left\| u_{0,n}\right\|_{L^1_{\GM}} \right) \le C \left( \frac{\left\| u_0\right\|_{L^1_{\GM}}^{2s \vartheta_1}}{t^{N \vartheta_1}} \vee \left\| u_0\right\|_{L^1_{\GM}} \right) \qquad \forall t > 0 \, ,
\end{equation}
for every $ n \in \mathbb{N} $. Because the $ L^\infty(M) $ norm is lower semicontinuous w.r.t.~pointwise convergence, upon passing to the limit in \eqref{thm.smoothing.HN-like.estimate3-lemma-proof} as $n \to \infty$, we end up with
	\begin{equation*}\label{thm.smoothing.HN-like.estimate3-lemma-proof-bis}
\left\| u(t) \right\|_{L^\infty(M)} \le \lim_{n \to \infty} \left\| u_n(t) \right\|_{L^\infty(M)}  \le C \left( \frac{\left\| u_0\right\|_{L^1_{\GM}}^{2s \vartheta_1}}{t^{N \vartheta_1}} \vee \left\| u_0\right\|_{L^1_{\GM}} \right) \qquad \forall t > 0 \, .
\end{equation*}
As a result, for every $ r>0 $ and $T>0$ we have:
$$
\begin{aligned}
\int_{0}^{T} \int_{B_r(o)}  u^{m}(t, x) \, \dg(x) \, {\rm d}t  \le & \, \int_{0}^{T} \left\| u(t) \right\|_{L^\infty(M)}^{m-1} \int_{B_r(o)}  u(t, x) \, \dg(x) \, {\rm d}t \\
\le & \, C \int_{0}^{T} \left( \frac{\left\| u_0\right\|_{L^1_{\GM}}^{2s (m-1) \vartheta_1}}{t^{N (m-1) \vartheta_1}} \vee \left\| u_0\right\|_{L^1_{\GM}}^{m-1} \right) \int_{B_r(o)}  u(t, x) \, \dg(x) \, {\rm d}t \, .
\end{aligned}
$$
Because $ N(m-1)\vartheta_1 < 1 $ and  $ L^1_{\GM}(M) $ is included with continuity in $ L^1(B_r(o)) $, from the above estimate and \eqref{obv} (up to taking the $\sup$ over $ x_0 \in \hn $), we deduce that $ u^m \in L^1((0,T); L^1_{loc}(M)) $, or equivalently $ u \in L^m((0,T); L^m_{loc}(M))  $.

Again, by monotonicity, this shows in particular that $  u_n \to u $ also in $L^m((0,T); L^m_{loc}(M))$.

\noindent\textbf{Passing to the limit in the weak dual formulation.} Having just shown that $ u $ belongs to the required functional spaces and $ u(0,\cdot)=u_0 $, we are just left with proving that it satisfies the weak dual formulation \eqref{def_eq}.

Since each approximate solution $ u_n $ is a WDS, we know that
\begin{equation}\label{def_eq_approx}
\int_{0}^{T} \int_{\hn}  \partial_t \psi \,  (- \Delta_{\hn})^{-s} u_n \, \dg \, {\rm d}t - \int_{0}^{T} \int_{\hn} u^m_n \, \psi \, \dg\, {\rm d}t = 0 \qquad \forall n \in \mathbb{N} \, ,
\end{equation}
for every $ T>0  $ and every $\psi \in C^1_c((0,T); L_c^{\infty}(\hn))$. Thanks to the convergence of $  u_n \to u $ in $L^m((0,T); L^m_{loc}(M))$, we can safely pass to the limit in the second term on the l.h.s.~of \eqref{def_eq_approx} to get
$$
\int_{0}^{T} \int_{\hn} u^m_n \, \psi \, \dg\, {\rm d}t  \underset{n \to \infty}{\longrightarrow} \int_{0}^{T} \int_{\hn} u^m \, \psi \, \dg\, {\rm d}t \, .
$$
As for the first term, we already know that $ u \in C^0([0,T]; L^1_{x_0,\GM}(M)) $; therefore, in the light of Remark \ref{remdef}, we have that $ (- \Delta_{\hn})^{-s} u \in  C^0([0,T]; L^1_{loc}(M))  $. By the definition of the operator $  (- \Delta_{\hn})^{-s} $, it is apparent that also $ \{ (- \Delta_{\hn})^{-s} u_n \} $ is a monotone increasing sequence pointwise converging to  $(- \Delta_{\hn})^{-s} u$, thus convergence surely occurs in $ L^1((0,T); L^1_{loc}(M))   $, whence
$$
\int_{0}^{T} \int_{\hn}  \partial_t \psi \,  (- \Delta_{\hn})^{-s} u_n \, \dg \, {\rm d}t  \underset{n \to \infty}{\longrightarrow} \int_{0}^{T} \int_{\hn}  \partial_t \psi \,  (- \Delta_{\hn})^{-s} u \, \dg \, {\rm d}t \, ,
$$
which this completes the proof. \qed

\begin{proof}[Proof of Corollary \ref{unique}]
The argument is an adaptation to the present setting of the proof of Theorem 4.5 in \cite{BV1}, so we sketch it here for the reader's convenience.

Let $ \{ u_n \} $ be the monotone increasing sequence of approximate solutions introduced in the proof of Theorem \ref{thm-existence}, and consider any other monotone increasing sequence $ \{ v_{0,k} \}  \subset L^1(\hn) \cap L^\infty(\hn)$, with $ v_{0,k} \ge 0 $, such that $  v_{0,k} \to u_0 $ pointwise as $ k \to \infty $. Upon repeating exactly the same construction as in the just mentioned proof, it is readily seen that one obtains another weak dual solution $v$ to problem \eqref{NFDE}. We want to show that $u=v$. To this aim, estimate \eqref{newmonotonicity2-bis} applied with $ (u,v) = (u_n,v_k) $ yields
\begin{align}\label{newmonotonicity2-ter}
\left\| u_n(t) - v_k(t) \right\|_{L^1_{x_0,\GM}} \leq C \left\| u_{0,n} - v_{0,k} \right\|_{L^1_{x_0,\GM}}  \qquad \text{for all $t \ge 0$ and all $ x_0  \in \hn $} \, ,
\end{align}
 for every $ n,k \in \mathbb{N} $. Passing to the limit in \eqref{newmonotonicity2-ter}  first as $ n \to \infty $ and then as $ k \to \infty $, we thus end up with
 $$
 \left\| u(t) - v(t) \right\|_{L^1_{x_0,\GM}} = \lim_{k\to\infty} \, \lim_{n\to\infty} \left\| u_n(t) - v_k(t) \right\|_{L^1_{x_0,\GM}}  \le 0  \qquad \text{for all $t \ge 0$ and all $ x_0  \in \hn $} \, ,
 $$
namely the assertion.
\end{proof}

\subsection{Proof of Theorem \ref{thm.smoothing.HN-like}}

Without loss of generality, we can restrict ourselves to initial data in $ L^1(M) \cap L^\infty(M) $ and the corresponding mild solutions constructed in Section \ref{smooth}, since each of the smoothing effects \eqref{S-1}, \eqref{S-2} and \eqref{thm.smoothing.HN-like.estimate.2} is stable under the (monotone) passage to the limit \eqref{def.WDS.approx} (note that in this case $ u_n \to u $ in $ C^0([0,T];L^1(M)) $). 

In order to prove \eqref{S-1}, we go back to \eqref{thm.smoothing.HN-like.proof.1} and simply observe that, for all $ R>0 $, it holds
\begin{align}\label{thm.smoothing.HN-like.proof.3}
(II) \le \frac{C}{t_0} \int_{\hn\setminus B_{R}(x_0)} u(t_0,x) \, \frac{1}{r(x, x_0)^{N-2s}} \, \dg(x) \leq \frac{C}{t_0} \, \frac{\left\| u(t_0) \right\|_{L^1(\hn)}}{R^{N-2s}} \, ,
\end{align}
where we have used \eqref{green-euc}. By combining \eqref{thm.smoothing.HN-like.proof.3} and \eqref{thm.smoothing.HN-like.proof.2}, and taking the supremum over $x_0\in \hn$, we obtain
\begin{align}\label{thm.smoothing.HN-like.proof.5}
\left\| u(t_0) \right\|_{L^\infty(\hn)}^m \le C \, \frac{R^{\frac{2sm}{m-1}}}{t_0^{\frac{m}{m-1}}} \left( 1 + \frac{t_0^{\frac{1}{m-1}} \left\| u(t_0) \right\|_{L^1(\hn)}}{R^{\frac{1}{(m-1)\vartheta_1}}}\right)  \qquad \text{for all } 0<R\le 1 \, .
\end{align}
If we now choose
\begin{equation}\label{ee5}
R=\left(t_0^{\frac{1}{m-1}} \left\| u(t_0) \right\|_{L^1(M)}  \right)^{(m-1)\vartheta_1}
\end{equation}
and plug it into \eqref{thm.smoothing.HN-like.proof.5}, we infer that
\begin{equation}\label{ee6}
\left\| u(t_0) \right\|_{L^\infty(M)} \le  C  \, \frac{\left\| u(t_0) \right\|_{L^1(M)}^{2s \vartheta_1}}{t_0^{N \vartheta_1}} \le C  \, \frac{\left\| u_0 \right\|_{L^1(M)}^{2s \vartheta_1}}{t_0^{N \vartheta_1}} \, ,
\end{equation}
where in the last passage we have exploited the fact that the $ L^1(M) $ norm is nonincreasing in time (namely \eqref{eq-wm-1}). Note that \eqref{ee6} holds subject to $ R \le 1 $ in \eqref{ee5}, which is achieved provided
\[
\left(t_0^{\frac{1}{m-1}} \left\| u_0\right\|_{L^1(M)}  \right)^{(m-1)\vartheta_1}\le 1 \, ,
\]
using again the time monotonicity of the $ L^1(M) $ norm. Because, at the borderline time $  t_\ast  = \left\| u_0 \right\|_{L^1(M)}^{-(m-1)}  $, estimate \eqref{ee6} yields $\| u(t_\ast) \|_{L^\infty(M)} \le C \| u_0 \|_{L^1(M)}  $, the smoothing effect \eqref{S-1} follows upon noticing again that $ L^p(M) $ norms (in particular for $p=\infty$) are nonincreasing along the semigroup of mild solutions.

Under Assumption \ref{ch}, i.e.~when $M$ is a Cartan-Hadamard manifold, it suffices to observe that \eqref{thm.smoothing.HN-like.proof.5} is valid for \emph{all} $R>0$ as, in this case, estimate \eqref{Hyp.Green.RN} replaces \eqref{Hyp.Green.RN.01}. As a result, the previous strategy holds without any smallness constraint on $R$, and therefore on $t_0$ as well.

We finally turn to the proof of \eqref{thm.smoothing.HN-like.estimate.2}, under Assumption \ref{neg}. First of all, we recall estimate \eqref{Hyp.Green.HN2}, which in particular entails
\begin{equation}\label{Hyp.Green.HN1.01}
\int_{B_R(x_0) }\GM(x,x_0) \, \dg(x) \le C R^{s}  \qquad \forall R \ge 1 \, .
\end{equation}
Let us go back again to \eqref{thm.smoothing.HN-like.proof.1}, and properly bound the two items on the rightmost side. As for the first one, upon reasoning exactly as in \eqref{thm.smoothing.HN-like.proof.2} (using \eqref{Hyp.Green.HN1.01} instead), we get
\begin{align}\label{thm.smoothing.HN-like.proof.6}
(I) \le \frac{1}{m} \left\| u(t_0) \right\|_{L^\infty(\hn)}^m + \frac{C}{t_0^{\frac{m}{m-1}}} \, R^{\frac{sm}{m-1}} \qquad \forall R \ge 1 \, .
\end{align}
In order to handle the second one, it is convenient to exploit the pointwise bound \eqref{Hyp.Green.HN0b}:
\begin{equation}\label{thm.smoothing.HN-like.proof.7}
(II) \le \frac{C}{t_0} \int_{\hn \setminus B_{R}(x_0)} u(t_0, x) \, \frac{e^{-(N-1) \sqrt{ \c} \, r(x, x_0)} }{r(x,x_0)^{1-s}}  \, \dg(x)  \le \frac{C}{t_0} \, \frac{\left\| u(t_0) \right\|_{L^1(\hn)}}{R^{1-s} \, e^{(N-1) \sqrt{ \c} \, R}} \qquad \forall R \ge 1 \, .
\end{equation}
By combining \eqref{thm.smoothing.HN-like.proof.1}, \eqref{thm.smoothing.HN-like.proof.6} and \eqref{thm.smoothing.HN-like.proof.7}, recalling again that $\|u(t_0)\|_{L^1(\hn)}\le \|u_0\|_{L^1(\hn)}$ and taking the supremum over $ x_0 \in \hn $, we end up with
\begin{align} \label{thm.smoothing.HN-like.proof.9}
\left\| u(t_0) \right\|_{L^\infty(\hn)}^m \le C\,\frac{R^{\frac{sm}{m-1}}}{t_0^{\frac{m}{m-1}}} \left( 1+ \frac{t_0^{\frac{1}{m-1}}\|u_0\|_{L^1(\hn)}}{e^{(N-1) \sqrt{ \c} \,R}}\right) \qquad \forall R \ge 1 \,  ,
\end{align}
where in the second term inside parentheses we have discarded a power of $R$ at the denominator, since it does not contribute to any improvement of the final estimate. Choosing now
$$R=\frac{1}{(N-1) \sqrt{ \c}} \, \log\!\left(t_0^{\frac{1}{m-1}} \left\|u_0\right\|_{L^1(\hn)}\right)\geq 1 \qquad \Longleftrightarrow \qquad t_0 \ge e^{(N-1) (m-1)\sqrt{\c}} \left\|u_0\right\|_{L^1(\hn)}^{-(m-1)} , $$
inequality \eqref{thm.smoothing.HN-like.proof.9} gives \eqref{thm.smoothing.HN-like.estimate.2}, and this concludes the proof of Theorem \ref{thm.smoothing.HN-like}. \qed

\subsection{Proof of Theorem \ref{smoothPhi}}
Similarly to the proof of Theorem \ref{thm.smoothing.HN-like}, the idea is to first establish the smoothing estimates \eqref{thm.smoothing.HN-like.estimate3}--\eqref{thm.smoothing.HN-like.estimate4} on approximate mild solutions, and then pass to the limit by monotonicity. However, here we need to be more cautious, since as observed in Remark \ref{no-monotone-app} monotone increasing sequences in general do not converge strongly in the space $ L^1_{\GM}(M) $. For such reason, we prefer in this case to keep the explicit symbol $ u_n $ of the approximate mild solutions constructed in the proof of  Theorem \ref{thm-existence}, and then pass to the limit as $ n \to \infty $.

By the proof of Proposition \ref{smoothPhi-lemma}, we can infer that there exists $ C_1 = C_1(N,k,c,s,m)>0$ such that
\begin{equation}\label{LLL}
\left\| u_n(t) \right\|_{L^\infty(M)} \le C_1 \left( \frac{\left\| u_{n}(t)\right\|_{L^1_{\GM}}^{2s \vartheta_1}}{t^{N \vartheta_1}} \vee \left\| u_{0,n}\right\|_{L^1_{\GM}} \right) \qquad \forall t > 0 \, ,
\end{equation}
for every $ n \in \mathbb{N} $. Note that \eqref{LLL} is a direct consequence of \eqref{greenR-bis} and the same reasoning as in the end of the proof of Proposition \ref{smoothPhi-lemma}. Since $ u_{0,n} \le u_0 $ and $ u_n \le u $, we deduce that
\begin{equation*}\label{LLL-bis}
\left\| u_n(t) \right\|_{L^\infty(M)} \le C_1 \left( \frac{\left\| u(t)\right\|_{L^1_{\GM}}^{2s \vartheta_1}}{t^{N \vartheta_1}} \vee \left\| u_{0}\right\|_{L^1_{\GM}} \right) \qquad \forall t > 0 \, ,
\end{equation*}
whence
$$
 \left\| u(t) \right\|_{L^\infty(M)} \le \liminf_{n\to\infty} \left\| u_n(t) \right\|_{L^\infty(M)} \le C_1 \left( \frac{\left\| u(t)\right\|_{L^1_{\GM}}^{2s \vartheta_1}}{t^{N \vartheta_1}} \vee \left\| u_{0}\right\|_{L^1_{\GM}} \right) \qquad \forall t > 0 \, ,
$$
namely the leftmost inequality in \eqref{thm.smoothing.HN-like.estimate3}. As for the rightmost inequality, it is enough to notice that $ \| u(t) \|_{L^1_{\GM}} \le C \| u_0 \|_{L^1_{\GM}} $, which follows upon taking the supremum over $ x_0 \in \hn $ in \eqref{obv}.

We now deal with the proof of the long-time behavior, i.e.~\eqref{thm.smoothing.HN-like.estimate4}. To this aim, first of all we notice that \eqref{thm.smoothing.HN-like.proof.2} (with $ u=u_n $) holds for all $R>0$ under Assumption \ref{ch}, as a consequence of \eqref{Hyp.Green.RN}. Besides, for all $ R \ge 1 $ we have
\begin{equation*}
(II)\le \frac C{t_0} \left\| u_n(t_0)\right\|_{L^1_{x_0,\GM}} ,
\end{equation*}
this being a direct consequence of the definition of $ \| \cdot \|_{L^1_{x_0,\GM}} $. Therefore, by proceeding similarly to the proof of Proposition \ref{smoothPhi-lemma}, we end up with the following bound, valid for every $ R \ge 1 $ and (almost every) $ x_0 \in \hn $:
\begin{align*}
u_n^m(t_0, x_0)
&\leq  \frac{1}{m} \left\| u_n(t_0) \right\|_{L^\infty(M)}^m + \frac{C}{t_0^{\frac{m}{m-1}}}  \, R^{\frac{2sm}{m-1}}  + \dfrac{C}{t_0} \left\| u_n(t_0) \right\|_{L^1_{x_0,\GM}} \\
&\leq  \frac{1}{m} \left\| u_n(t_0) \right\|_{L^\infty(M)}^m+\frac{C}{t_0^{\frac{m}{m-1}}}  \, R^{\frac{2sm}{m-1}}  +  \dfrac{C}{t_0} \left\| u_{0,n} \right\|_{L^1_{x_0,\GM}}
.
\end{align*}
Taking the supremum over $x_0$ and raising to the power $ \frac 1 m $ yields
\[
\left\| u_n(t_0) \right\|_{L^\infty(M)} \le \frac{C}{t_0^{\frac{1}{m-1}}}  \, R^{\frac{2s}{m-1}}  +  \dfrac{C}{t_0^{\frac1m}} \left\| u_{0,n} \right\|_{L^1_{\GM}}^{\frac1m} \le \frac{C}{t_0^{\frac{1}{m-1}}}  \, R^{\frac{2s}{m-1}}  +  \dfrac{C}{t_0^{\frac1m}} \left\| u_{0} \right\|_{L^1_{\GM}}^{\frac1m} ,
\]
whence, letting $ n \to \infty $,
$$
\left\| u(t_0) \right\|_{L^\infty(M)} \le \liminf_{n \to \infty} \left\| u_n(t_0) \right\|_{L^\infty(M)} \le \frac{C}{t_0^{\frac{1}{m-1}}}  \, R^{\frac{2s}{m-1}}  +  \dfrac{C}{t_0^{\frac1m}} \left\| u_{0} \right\|_{L^1_{\GM}}^{\frac1m} ,
$$
still for all $ R \ge 1 $. The best choice in such bound is clearly $  R=1 $, which gives
\[
\begin{aligned}
\left\| u(t_0) \right\|_{L^\infty(M)} &\le C \left(\frac{1}{t_0^{\frac{1}{m-1}}}  +  \dfrac{\left\| u_0\right\|_{L^1_{\GM}}^{\frac1m}}{t_0^{\frac1m}} \right) \\
&\le \dfrac{C}{t_0^{\frac1m}} \left\| u_0\right\|_{L^1_{\GM}}^{\frac1m} \left[\frac1{\left(t_0^{\frac1{m-1}}\left\| u_0\right\|_{L^1_{\GM}}\right)^\frac1m}+1\right] \le \dfrac{C}{t_0^{\frac1m}} \left\| u_0\right\|_{L^1_{\GM}}^{\frac1m}
\end{aligned}
\]
provided $t_0\ge\|u_0\|_{{L^1_{\GM}}}^{-(m-1)}$, that is \eqref{thm.smoothing.HN-like.estimate4}. \qed

\section{Open problems}\label{open}

We conclude the paper by listing some open problems that might turn out to be interesting directions for future research.

\begin{itemize}
\item \textit{Solutions that may change sign: }Extend our results to signed solutions. Extension methods as in \cite{BGS} might be useful to this end.
\item \textit{Uniqueness: }Show that WDS are unique, not only the ones obtained by limits of monotone approximations, as done here. Such result is known from \cite{GMP1} in the Euclidean case for very weak solutions.
\item\textit{Mass conservation: }For positive, integrable solutions to \eqref{NFDE}, prove that $\|u(t)\|_1=\|u(0)\|_1$ for all such solutions and all $t>0$. Precise bounds for the fractional Laplacian of a test function should be proved, which is not elementary on general manifolds.
\item \textit{Large time behaviour: }Prove existence of fundamental solutions, namely positive solutions taking a Dirac delta as initial datum, and investigate their role in the asymptotic behaviour of general solutions as holds in the Euclidean case: see \cite{Vjems}, \cite{V} for the Euclidean non-fractional case, and as concerns existence a uniqueness only and in the non-fractional case again, and \cite{GMP2} in negatively curved manifolds.
\item \textit{Global Harnack Principle and convergence in relative error: }Prove (explicit) pointwise upper and lower bounds for solutions in the spirit  of the results in the euclidean setting: \cite{BV3} in fractional fast diffusive range $m<1$, \cite{BSV} for the fractional heat equation, $m=1$, and \cite{BS}, for the local fast diffusion equation, possibly with weights. Show that this implies convergence in relative error with respect to the fundamental solution with the same mass, as in \cite{BS}, and possibly characterize the class of data for which the GHP and the convergence in relative error holds.

\end{itemize}


\par\bigskip\noindent
\textbf{Acknowledgments.}
The first, third and fourth authors are members of the Gruppo Nazionale per l'Analisi Matematica, la Probabilit\`a e le loro Applicazioni (GNAMPA, Italy) of the Istituto Nazionale di Alta Matematica (INdAM, Italy) and are partially supported by the PRIN project 201758MTR2: ``Direct and Inverse Problems for Partial Differential Equations: Theoretical Aspects and Applications'' (Italy). The second author is partially supported by the Project PID2020-113596GB-I00 (Spain) and acknowledges financial support from the Spanish Ministry of Science
and Innovation, through the ``Severo Ochoa Programme for Centres of Excellence in R\&D'' (CEX2019-000904-S) and by the E.U. H2020 MSCA programme, grant agreement 777822.

\

\noindent\textbf{Conflict of interest statement. }On behalf of all authors, the corresponding author states that there is no conflict of interest.

\

\noindent\textbf{Data Availability Statements. }All data generated or analysed during this study are included in this published article

\


\end{document}